\documentclass[onefignum,onetabnum]{amsart}

\usepackage{lipsum}
\usepackage{amssymb,amsmath,amsthm,dsfont,amsfonts,empheq}
\usepackage{graphicx}
\usepackage{subcaption}
\usepackage{wrapfig}
\usepackage{epstopdf}
\usepackage{algorithmic}
\usepackage[many]{tcolorbox}
\usepackage{dsfont}
\usepackage{nicefrac}
\usepackage{todonotes}
\usepackage{mathtools}
\usepackage{geometry}
\usepackage{hyperref}
\usepackage[capitalise]{cleveref}
\usepackage{cite}
\usepackage{graphicx}
\usepackage{enumerate}
\usepackage{bm}
\usepackage{stmaryrd}
\usepackage{array}
\usepackage{esint}

\usepackage[lang=english]{fixme}
	\fxsetup{draft}
	\fxusetheme{color} 
\ifpdf
  \DeclareGraphicsExtensions{.eps,.pdf,.png,.jpg}
\else
  \DeclareGraphicsExtensions{.eps}
\fi

\newtheorem{theorem}{Theorem}[section]
\newtheorem{lemma}[theorem]{Lemma}

\newtheorem{definition}[theorem]{Definition}

\newtheorem{remark}[theorem]{Remark}

\usepackage{amsopn}

\newcommand{\e}{\varepsilon}
\newcommand{\Z}{\mathbb{Z}}
\newcommand{\N}{\mathbb{N}}
\newcommand{\R}{\mathbb{R}}

\newcommand{\omg}{{\Omega_\e^g}}
\newcommand{\omf}{{\Omega_\e^f}}
\newcommand{\dive}{\operatorname{div}}

\newcommand{\di}[1]{\,\mathrm{d}#1}

\newcommand{\ID}{\operatorname{Id}}
\newcommand{\interior}{\operatorname{int}}
\newcommand{\derh}{\partial_t^h}

\newcommand{\weakstar}{\mathrel{\overset{*}{\rightharpoonup}}}

\newtcolorbox{mybox}[1]{%
    tikznode boxed title,
    enhanced,
    arc=0mm,
    interior style={white},
    attach boxed title to top left= {yshift=-\tcboxedtitleheight/2-0.05cm, xshift=0.7cm},
    fonttitle=\small\bfseries,
    colbacktitle=white,coltitle=black,
    boxed title style={size=small,colframe=white,boxrule=0pt},
    title={#1}}

\ifpdf
\hypersetup{
  pdftitle={Homogenization of a Poroelasticity Model for Fibre-Reinforced Hydrogels},
  pdfauthor={M.~Eden and H.~S.~Mahato}
}
\fi

\title[A Two-Component System for Fibre-Reinforced Hydrogels]{A Two-Component Poro-viscoelastic System for Fibre-Reinforced Hydrogels: Analysis and Homogenization
}

\author{Michael Eden}
\address{Department of Mathematics, University of Regensburg, Germany}

\author{Hari Shankar Mahato}
\address{Department of Mathematics, IIT Kharagpur, India}

\begin{document}

\maketitle

\begin{abstract}
We study the multiscale behavior of a coupled visco-poroelastic system arising in the modelling of fibre-reinforced hydrogels (FIHs) used in tissue engineering scaffolds.
The composite material consists of a periodic fibre scaffold, which is governed by quasi-static linear elasticity, and a hydrogel phase saturating the interstitial space, which is modelled as a Biot linear poroelastic medium enhanced with Kelvin--Voigt structural damping.
The two phases are coupled through continuity of displacement and traction across their shared interface.
Both the fibre scaffold and the hydrogel phase are connected, so that mechanical forces can be transmitted through the composite and interstitial fluid can flow directly through the hydrogel network.

Starting from a microscopic ($\e$-scale) model, we derive uniform a-priori estimates and establish well-posedness via a Rothe time-discretisation argument for both the case of standard Biot fluid content \(\eta=0\) and the case of viscous fluid content \(\eta=\alpha\delta>0\).
We then perform a rigorous two-scale homogenization in the limit $\e \to 0$ using periodic unfolding.
In addition to the usual effective elasticity, storage, coupling, and permeability coefficients, the homogenized constitutive laws contain nonlocal-in-time memory terms generated by the microscopic viscoelastic relaxation.
All effective coefficients and memory kernels are explicitly characterized in terms of the microscale geometry and material parameters.
\end{abstract}

\bigskip
\noindent\textbf{Keywords:}
poroelasticity, poro-viscoelasticity, Kelvin--Voigt damping,
fibre-reinforced hydrogels, homogenization,
periodic unfolding, memory effects, Rothe method.

\medskip
\noindent\textbf{MSC 2020:} 35B27, 74F10, 74Q05, 74Q15, 76S05.

\section{Introduction}
\label{sec:intro}

Fibre-reinforced hydrogels (FIHs) are complex biomaterials used in tissue engineering (e.g., for growing cartilage).
In a typical FIH (see \cref{fig:1}), a periodic scaffold of fibres is saturated with a hydrogel with the goal of simultaneously replicating the structural stiffness of load-bearing tissues as well as the hydrophilic, nutrient-permeable environment required for cell viability and proliferation \cite{ARC13,CK18,CYB18,KMH12,LN17}.
They are mainly used in different biomedical applications for the in-vitro growth of tissues which can then be used to repair cartilage, meniscus, or cardiac tissue patches \cite{PW17,KMH12}.
In that context, understanding and controlling the mechanical behavior of FIHs is important to ensure that they act as good surrogates for actual tissue \cite{CYB18,KMH12}.

\begin{figure}[h]
\centering
\includegraphics[width=.7\textwidth]{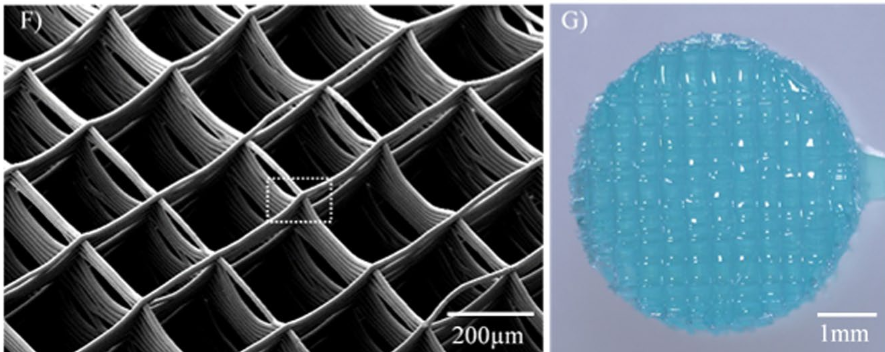}%
\caption{The periodic fibre scaffold: empty (left) and saturated with hydrogel (right).
This figure is taken from \cite{CH18} under a Creative Commons license (see: \url{http://creativecommons.org/license/by/4.0/})}%
\label{fig:1}%
\end{figure}

However, their mechanical properties are quite complex due to their composite nature where some processes are intrinsically coupled across different length scales:
At the microscale ($\e \sim 10$--$100\,\mu$m), e.g., 
individual fibre structures and gel pockets are able to exchange both mechanical forces and interstitial fluid.
At the macroscale ($\sim$ mm--cm), the FIH experiences deformations and diffusive transport governed by effective, homogenized properties.
Over the last few years, there has been quite some interest in different approaches of describing, modeling, and calculating the effective material properties of different FIHs under sometimes very different underlying assumptions on the fibres, the hydrogel, and their connection:
In these models, the fibre scaffolding is generally described as either linearly elastic (e.g.,\cite{hollister_computational_2007,CK18}) or hyperelastic (e.g., \cite{CH18,nian_constitutive_2023}).
The hydrogel, which is itself a composite network of hydrophilic polymer chains saturated with water-based fluids, is sometimes modeled as an elastic material \cite{CH18} or as a Newtonian fluid \cite{hollister_computational_2007}; but generally it is better understood as a saturated porous structure where, e.g., Biot poroelasticity applies (see \cite{CK18,EM22}).

Effective models for FIHs strongly depend on these underlying modeling assumptions and are often based on non-rigorous averaging techniques; as examples we point to \cite{CH18}, where a two-scale model is proposed, and to \cite{hollister_computational_2007,CK18}, where effective models are calculated via expansions.
In \cite{EM22}, an effective material model was rigorously derived using two-scale convergence based on a coupled elastic-poroelastic model, where, crucially, the gel pockets inside the fibre scaffolding were assumed to be well-separated.
Recently, this was combined with a dimension reduction in \cite{Chakrabortty2025}.

Beyond the specific setting of fibre-reinforced hydrogels, there are several structurally related homogenization results for heterogeneous viscoelastic and poro-viscoelastic media.
In \cite{ptashnyk_seguin_2017}, for instance, elastic microfibrils are embedded in a Kelvin--Voigt viscoelastic matrix in a model for plant cell wall biomechanics where the homogenization result also includes macroscopic memory terms. 
In contrast, the matrix phase in the present work is poroelastic, so that the mechanical relaxation is additionally coupled to pore pressure and Darcy transport.
Poro-viscoelastic macroscopic models have also been derived by
upscaling fluid-saturated solids with a viscoelastic skeleton; see
\cite{rohan_poro-viscoelasticity_2014}.
There, the solid constituent already exhibits fading-memory viscoelasticity at the microscopic level, whereas in the present model the memory terms result from the heterogeneous coupling of a purely elastic fibre phase with a Kelvin--Voigt poroelastic gel phase.

In this current work, we are building upon these earlier works to address the following two aspects:
\begin{itemize}
    \item[$(a)$] In reality, the gel pockets are not well separated and allow for exchange (see the empty scaffold in \cref{fig:1} where holes in the fibre walls are clearly visible) of mass and momentum.
    In this work, we allow for such exchanges by working with a connected-connected geometry where both the gel phase and the fibre scaffolding are connected.
    \item[$(b)$] The polymer-chain networks making up the hydrogel can exhibit \emph{viscoelastic} creep and stress relaxation.
    This viscoelastic behavior is not captured by poroelasticity but is an important part of the mechanical behavior of tissue \cite{moutos_composite_2008}.
    In this work, we propose a visco-poroelasticity model for the gel phase, where the stress tensor additionally depends on the rate of change of deformations.
\end{itemize}

To be more precise, we propose a microscale model for a highly heterogeneous FIH (modeled as a domain $\Omega\subset\R^3$) consisting of a periodically arranged fibre scaffold phase occupying a domain $\omf\subset\R^3$ and a hydrogel phase $\omg\subset\R^3$ saturating the connected interstitial space.
The fibre phase is governed by quasi-static linear elasticity, i.e.,
\begin{subequations}
\begin{equation}
-\dive(\mathcal{C}^fe(u_\e^f))=f_\e^f\quad\text{in}\quad S\times\omf
\end{equation}
and the hydrogel is modeled as a linear visco-poroelastic medium (using Kelvin-Voigt damping) via
\begin{align}
-\dive\left(C^ge(u_\e^g+\delta \partial_tu_\e^g)-\alpha p_\e\mathds{I}\right)&=f_\e^g&\quad &\text{in}\quad S\times\omg,\\
\partial_t\left(cp_\e+\alpha\dive u_\e^g+\eta \dive \partial_tu_\e^g\right)-\dive\left(K\nabla p_\e\right)&=g_\e&\quad &\text{in}\quad S\times\omg.
\end{align}
\end{subequations}
Here, $(u_\e^f,u_\e^g)$ are the deformations in their respective phases and $p_\e$ is the pore pressure inside the hydrogel.
The parameters $\delta$ and $\eta$ govern the strength of the damping and the processes in the two phases are coupled via transmission conditions at the interface $\partial\Omega^g_\e\cap\partial\Omega_\e^f$.
In this work, we consider the two structurally special cases
\[
    \eta=0
    \qquad\text{and}\qquad
    \eta=\alpha\delta.
\]
For $\eta=0$, the fluid content has the standard Biot form. 
If $\eta=\alpha\delta$, the same auxiliary displacement variable
\[
    w_\e=u_\e+\delta\partial_tu_\e
\]
appears in both the mechanical stress and the fluid content.
The details and general idea behind this model are presented in \cref{sec:modeling}.

We note that the switch from elasticity and poro-viscoelasticity behavior between the fibre phase and the gel phase creates the main analytical difficulty in the analysis of the problem.
The auxiliary displacement $u+\delta\partial_tu$, which is commonly used in the analysis of poro-viscoelastic systems, is naturally defined only in the gel phase.
Because of that, the standard global reduction to an elliptic problem coupled with an evolution equation is not available.
To get around this issue, we use tailored Rothe discretisations to establish well-posedness of the microscopic transmission problem for the two structurally specific choices $\eta=0$ and $\eta=\alpha\delta$ and derive estimates that are uniform with respect to $\e$.

Now, the main goal of this work is to rigorously derive an effective macroscopic description of the material behavior which is done via periodic homogenization.
To that end, we use the small parameter $\e>0$ to describe the periodicity of the geometry and show that the above problem has a unique solution with uniform estimates in $\e$.
These estimates provide compactness for the displacement and pressure, but, crucially, also for the unfolded displacement gradients and their time derivatives. 
Using periodic unfolding, we pass to a coupled macro--micro limit system, where the well-prepared initial data are used to identify the initial trace of the microscopic displacement corrector.
And after eliminating this corrector via a number of stationary and relaxation cell problems we arrive at the following effective model (the details are spelled out in \cref{thm:homog}) when $\eta=0$:
\begin{align}\label{eq:limit_problem}
  -\dive\left(\mathbb C^{\rm h}e(u)-A^{\rm h}p +\delta\mathcal S^{\rm h}\left[e(\partial_tu),\partial_tp)\right]\right)
  &= f
  && \text{in } S\times\Omega,\\
  \partial_t\left( c^{\rm h}p +A^{\rm h}:e(u) +\delta\, \mathcal G^{\rm h} \bigl[e(\partial_tu),\partial_tp\bigr]\right)
  -\dive\bigl(K^{\rm h}\nabla p\bigr) &= |Y^g|g &&\text{in }S\times\Omega.
\end{align}
Here, $\mathbb C^{\rm h}$, $A^{\rm h}$, $c^{\rm h}$, and $K^{\rm h}$ are the standard effective coefficients defined through stationary cell problems.
In this respect, the resulting limit system is consistent with the homogenization limit in the poroelastic case in \cite{EM22} when $\delta=\eta=0$ in the microscopic problem.
The two additional operators $\mathcal S^{\rm h}$ and $\mathcal G^{\rm h}$, however, are rather complex and contain both local and memory terms for the time derivatives.
They are given by
\begin{align}\label{eq:coupling_operators}
\mathcal S^{\rm h}[E,r]&=\mathbb D^{\rm h}E+\partial_t\mathbb M^{\rm h}*E+\partial_t\mathbb{R}^{\rm h}*r,\\
\mathcal G^{\rm h}[E,r]&=-\partial_t R^{\rm h}*E +H^{\rm h}*r. \label{eq:hom-fluid-content}
\end{align}
%
For the precise definition of these coefficients and memory kernels, we refer to \cref{sec:homog}.
The emergence of memory terms under homogenization is a known phenomenon in heterogeneous viscoelastic materials. 
We refer to \cite{abdessamad_memory_2009} where it was shown for a Kelvin--Voigt composite with rapidly varying space- and time-dependent coefficients that a constitutive law which is local in time at the microscopic level may lead to a nonlocal-in-time effective response.

In the present setting, this mechanism is coupled to the poroelastic pressure and mass-balance equations. 
Consequently, the effective memory terms occur not only in the macroscopic stress but also in the macroscopic fluid content. 
They involve the histories of $e(\partial_tu)$ and $\partial_tp$ and encode the interaction between the Kelvin--Voigt gel phase and the purely elastic fibre scaffold.

The case $\eta=\delta\alpha$, expectedly, leads to a more involved effective fluid content, while the mechanical equation remains unchanged.
In this case, the homogenised mass balance takes the form
\begin{multline}\label{eq:limit-pressure-eta-delta-alpha}
    \partial_t\Biggr[c^{\rm h}p + A^{\rm h}:e(u) + \delta\, \mathcal G^{\rm h} \bigl[e(\partial_tu),\partial_tp\bigr] \\ 
    + \delta\,\partial_t\Bigl( \bigl(c^{\rm h}-c|Y^g|\bigr)p + A^{\rm h}:e(u) + \delta\, \mathcal G^{\rm h} \bigl[e(\partial_tu),\partial_tp\bigr] \Bigr) \Biggr] - \dive\bigl(K^{\rm h}\nabla p\bigr) = |Y^g|h \quad\text{in }S\times\Omega.
\end{multline}

We want to point out that the mathematical theory of purely poro-viscoelastic PDE models is well understood and we point to \cite{BGSW16,BGSV19,BS22,BMW23} for further details.
The main mathematical difficulty in this work is the treatment of the jump in viscous behavior across the interface and the corresponding jump in regularity.
The standard global reduction to an elliptic poroelastic problem coupled with an ODE (as outlined in \cite[Section 3.2]{BGSW16}) is not directly applicable, since the natural auxiliary displacement may jump across the fibre--gel interface. 
We overcome this by constructing tailored Rothe discretisations for the two admissible choices of the viscous storage coefficient.
For the limit process $\e\to0$, we rely on the method of periodic unfolding \cite{CDG02,CDG08} (which is equivalent to the notion of two-scale convergence \cite{Ngu89,All92,LNW02}).
Due to the connected-connected geometry (often in homogenization it is assumed that one phase is disconnected) we are employing in our setup, we additionally rely on more involved extension operators as introduced in \cite{Hoepker2014}.

The paper is organised as follows:
In~\cref{sec:modeling}, we introduce the geometric setup and the microscale model. We also collect the mathematical assumptions on the data and coefficients.
This is followed up, in \cref{sec:analysis}, with the analysis (well-posedness and $\e$-uniform energy estimates) of the microscale model where we distinguish between the two cases $\eta=0$ (\cref{sssec:eta0}) and $\eta=\alpha\delta$ (\cref{sssec:eta_alpha}).
Finally, \cref{sec:homog} then focuses on the homogenization process and presents the homogenized model (\cref{thm:homog}).

\section{Modeling and Setup}\label{sec:modeling}
In this section, we introduce the mathematical model we are considering and collect the main assumptions.
The geometric setup we have in mind models a porous medium where both phases (solid phase and fluid phase) are connected (see also \cref{fig:fibre-geometry-overview}).

\begin{figure}[htbp]
    \centering
    \begin{subfigure}[c]{0.58\textwidth}
        \centering
        \includegraphics[width=.75\textwidth]{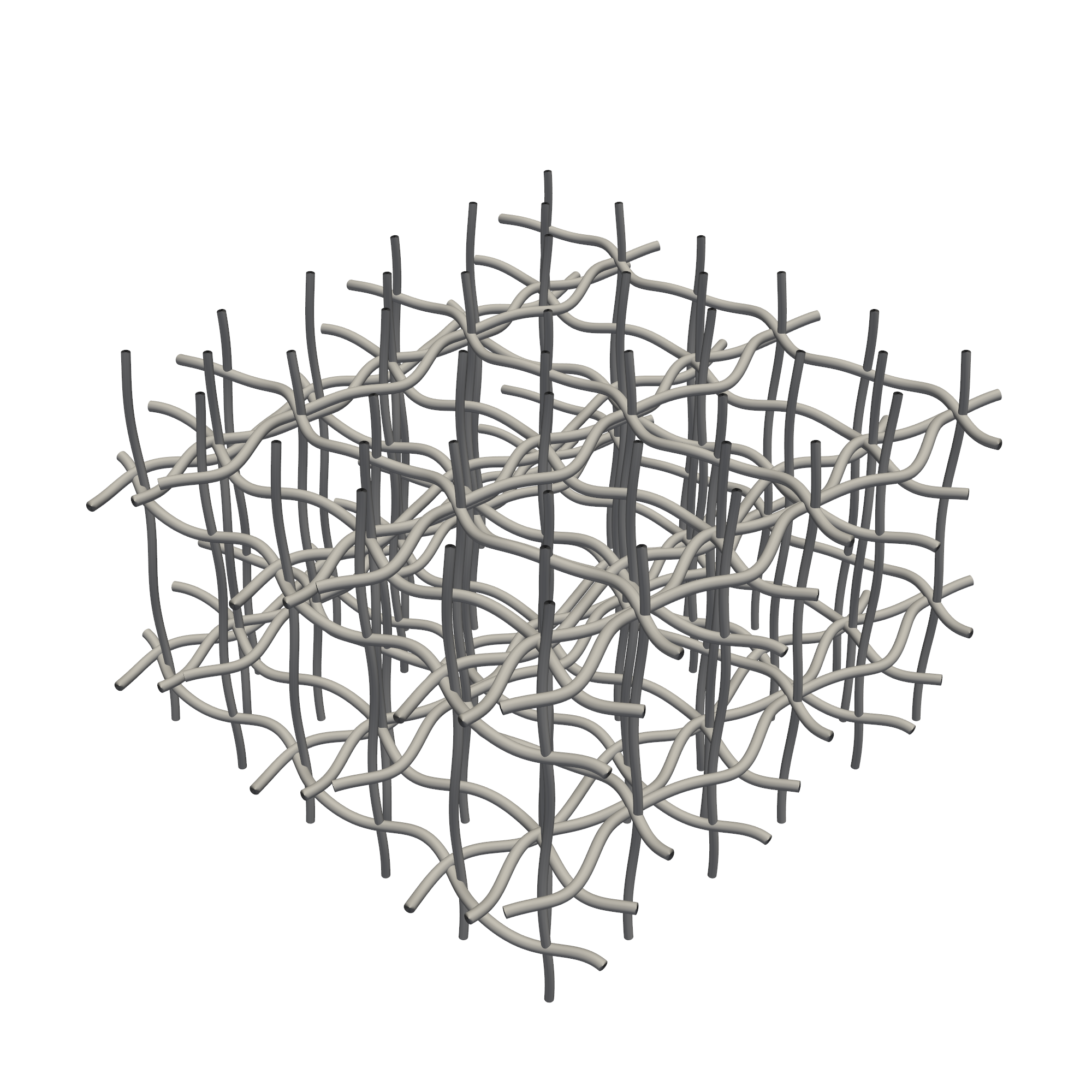}
    \end{subfigure}
    \hfill
    \begin{subfigure}[c]{0.38\textwidth}
        \centering
        \includegraphics[width=.8\textwidth]{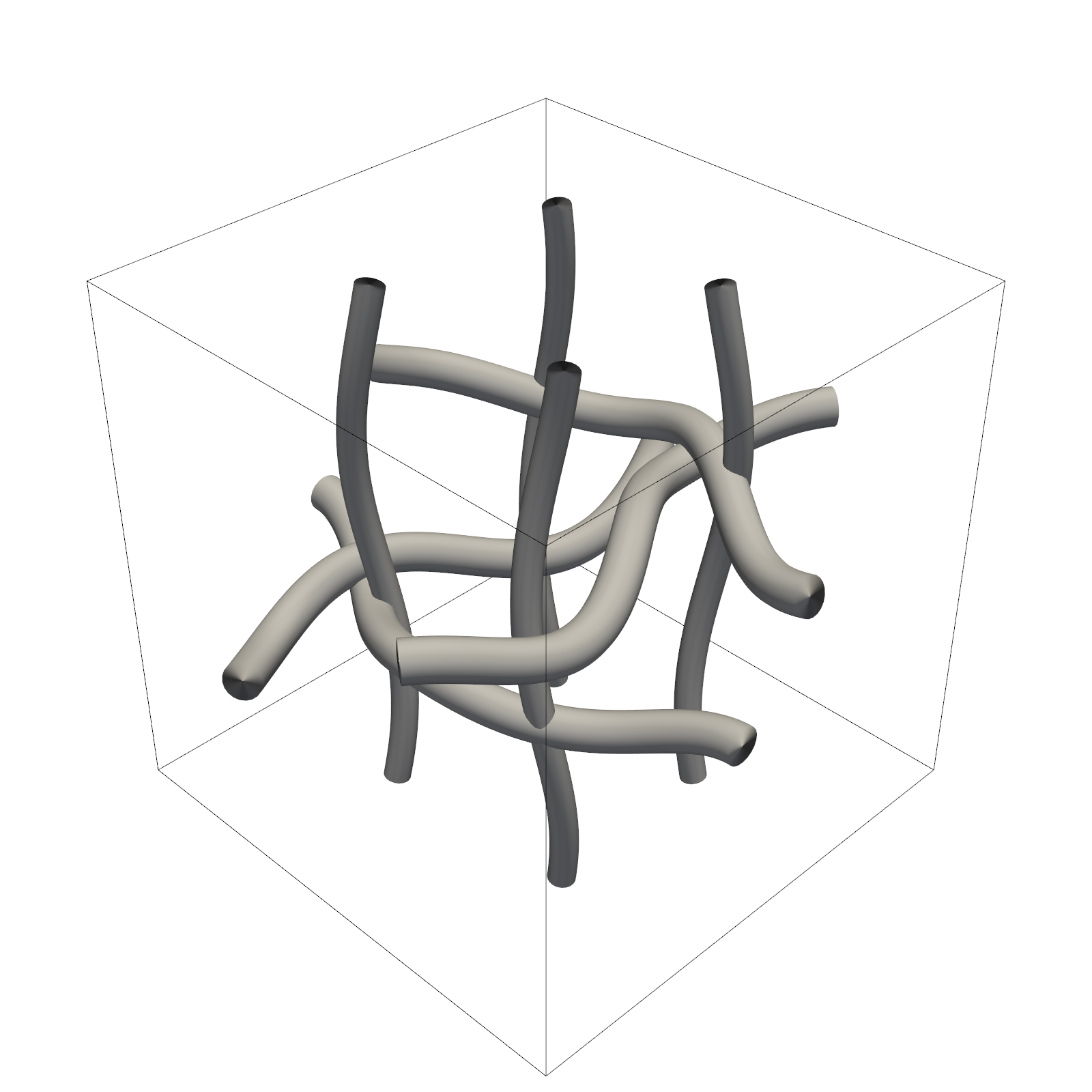}
        \label{fig:periodic-geometry}
    \end{subfigure}
    \caption{Schematic representation of the periodic geometry (left) and the unit cell (right). Note that both the fibres and the fluid space are connected.
    For visibility reasons, we have drawn only a few fibres.}
    \label{fig:fibre-geometry-overview}
\end{figure}

To make this precise, let $\Omega\subset\R^3$ be a bounded Lipschitz domain representing the overall system and let $S=(0,T)$, $T>0$, represent the time interval of interest.
We denote the outer normal vector of $\Omega$ with $\nu=\nu(x)$.
Let $Y=(0,1)^3$ be the open unit cell in $\R^3$ and take two disjoint Lipschitz domains $Y^{f},$ $Y^{g}\subset Y$ such that $\overline{Y}=\overline{Y^f}\cup\overline{Y^g}$.
We set $\Gamma:=\overline{Y^f}\cap\overline{Y^g}$.
With $n_\Gamma=n_\Gamma(y)$, $y\in\Gamma$, we denote the normal vector of $\Gamma$ pointing outwards of $Y^{g}$.
Now, let $\e_0$ be chosen such that $\Omega$ can be perfectly tiled with $\e_0Y$ cells, set $\e_n=2^{-n}\e_0$,\footnote{In the following, we suppress the index $n$ and note that always $\e\in\{\e_n\ : \ n\in\N\}$ and $\e\to0$ is understood as $\lim_{n\to\infty}\e_n$.} and let $I_{\e}\subset\Z^3$ be the corresponding index sets such that
\[
\Omega=\interior\left(\bigcup_{k\in I_{\e}}\e(\overline{Y}+k)\right).
\]
We introduce the periodic sets
\[
\Omega_\e^f=\interior\left(\bigcup_{k\in I_{\e}}\e(\overline{Y^f}+k)\right),\quad \Omega_\e^g=\interior\left(\bigcup_{k\in I_{\e}}\e(\overline{Y^g}+k)\right),\quad \Gamma_\e=\partial\Omega_\e^f\cap\partial\Omega_\e^g\cap\Omega
\]
representing the fibre and gel domain  as well as the internal boundary between fibre and gel, respectively.
We additionally assume that, for every admissible $\e>0$, the sets $\Omega_\e^f$ and $\Omega_\e^g$ are connected Lipschitz domains.
This imposes an additional geometric condition on $Y^f$ and $Y^g$, namely that their periodic extensions form connected phases and meet across a Lipschitz interface.
We also introduce the outer boundaries
\[
(\partial\Omega)_\e^l=\partial\Omega_\e^l\cap\partial\Omega\quad (l=f,g).
\]
In the following, we take $\chi_\e^{i}\colon\Omega\to\{0,1\}$ ($i=f,g$) to denote the characteristic functions corresponding to $\Omega_\e^i$.
Now, let $u_\e^f\colon S\times\omf\to\R^3$ represent the deformation in the fibre part.
Assuming that the mechanical response of the fibre scaffold is governed by quasi-stationary elasticity, we then have
\begin{subequations}\label{eq:weak_form}
\begin{align}
-\dive(\mathcal{C}^fe(u_\e^f))=f_\e^f\quad\text{in}\quad S\times\Omega_\e^f.\label{eq:2.1a}
\end{align}
Here, $e(w)=1/2(\nabla w+\nabla w^T)$ is the linearized strain tensor, $\mathcal{C}^f\in\R^{3\times3\times3\times3}$ the elasticity tensor, and $f_\e^f$ possible volume forces.

The hydrogel, on the other hand, is itself a composite (polymer networks saturated with water) with complex mechanical properties.
These are sometimes modeled as a linear poroelastic material, see, e.g., \cite{CYB18,CK18,KMH12,LN17,EM22}.
While this approach is good at capturing fluid movement inside the hydrogel, it neglects the effect of relaxation of the polymer networks \cite{Hu2012,Wang2014}.
In this work, we therefore propose a more complex visco-poroelasticity model, which is able to also account for relaxation.

To that end, we introduce the pore pressure $p\colon S\times\omg\to\R$ and the gel deformation $u_\e^g\colon S\times\omg\to\R^3$.
Following the Kelvin-Voigt approach of including strong (or “structural”) damping, cf. \cite{CHR82,CHT89}, we introduce the relaxation parameter $\delta>0$ and write the stress tensor $\sigma$ as
\[
\sigma=\mathcal{C}^ge(u_\e^g+\delta\partial_tu_\e^g)-\alpha p_\e I
\]
where $\mathcal{C}^g\in\R^{3\times3\times3\times3}$ is the elasticity tensor of the hydrogel and $\alpha>0$ the Biot-Willis parameter which measures how much fluid pressure contributes to the deformation of the hydrogel compared to the total applied stress.
We point out that this structure can be obtained as a limiting case of the poro-viscoelastic modeling in \cite{CO04,BGSW16,BMW23}.

We model the effect of viscoelasticity on the fluid content $\zeta$ by considering
\begin{equation*}
\zeta=c p_\e+\alpha \nabla \cdot u_\e+\eta \nabla \cdot \partial_tu_\e.
\end{equation*}
where $c>0$ is the poroelastic storage coefficient and where for the viscous storage coefficient $\eta$ we take either $\eta=0$ or $\eta=\alpha\delta$.
Here, the case $ \eta = 0$ corresponds to the standard Biot definition of the fluid content, which commonly appears in the literature for linearized poro-elastic systems with and without viscoelastic effects.
This assumes that the pore space adjusts instantaneously to any deformation.
In the second case, $\eta=\alpha\delta$, on the other hand, the fluid inside the hydrogel also has an internal relaxation.
The visco-poroelastic response of the hydrogel is then modeled as
\begin{align}
-\dive\left(C^ge(u_\e+\delta \partial_tu_\e)-\alpha p_\e\mathds{I}\right)&=f_\e^g&\quad &\text{in}\quad S\times\Omega_\e^g,\label{eq:2.1b}\\
\partial_t\left(cp_\e+\alpha\nabla\cdot u_\e+\eta \nabla \cdot \partial_tu_\e\right)-\dive\left(K\nabla p_\e\right)&=g_\e&\quad &\text{in}\quad S\times\Omega_\e^g,\label{eq:2.1c}
\end{align}
where $K\in\R^{3\times3}$ is the Darcy permeability and $g_\e$ denotes a potential volume source of fluid.

Now, at the interface between the fibre and gel parts, we assume continuity of both deformation and stresses and no fluid penetration into the fibre:
\begin{alignat}{2}
\left(\mathcal{C}^ge(u_\e^g+\delta \partial_tu_\e^g)-\alpha p_\e\mathds{I}\right)n_\e&=(\mathcal{C}e(u_\e^f))n_\e&\quad &\text{on}\quad S\times\Gamma_\e,\label{eq:2.1d}\\
u_\e^f&=u_\e^g&\quad &\text{on}\quad S\times\Gamma_\e,\label{eq:2.1e}\\
-K\nabla p_\e\cdot n_\e&=0&\quad &\text{on}\quad S\times\Gamma_\e\label{eq:2.1f}
\end{alignat}
Finally, we close the system with homogeneous conditions at the outer boundaries as well as initial conditions:
\begin{alignat}{2}
u_\e^f&=0&\quad &\text{on}\quad S\times(\partial\Omega)_\e^f,\label{eq:2.1g}\\
u_\e^g&=0&\quad &\text{on}\quad S\times(\partial\Omega)_\e^g,\label{eq:2.1h}\\
-K\nabla p_\e\cdot n_\e&=0&\quad &\text{on}\quad S\times(\partial\Omega)_\e^g,\label{eq:2.1i}\\
u_\e(0)&=u_{\e,0}&\quad &\text{in}\quad \Omega\label{eq:2.1j},\\
p_\e(0)&=p_{\e,0}&\quad &\text{in}\quad \Omega_\e^g\label{eq:2.1k}.
\end{alignat}
In the case $\eta=\delta\alpha$, we additionally have the initial velocity
\[
\partial_tu_\e(0)=v_{\e,0}\quad \text{in}\quad \Omega\label{eq:2.1l}.
\]
\end{subequations}
%
In the following, we write $\mathcal{C}(x)=\chi_{\omg}(x)\mathcal{C}^g+\chi_{\omf}(x)\mathcal{C}^f$ and similarly for $u_\e,f_\e$.
Please note that with the continuity condition \eqref{eq:2.1e} together with the boundary conditions \eqref{eq:2.1g} and \eqref{eq:2.1h}, we can expect the regularity $u_\e\in H_0^1(\Omega)$.
With this convention, \cref{eq:2.1a,eq:2.1b} can be written together as
\[
-\nabla\cdot\left(Ce(u_\e)+\delta C^ge(\partial_tu_\e)-\alpha p_\e\mathds{I}\right)=f_\e\quad \text{in}\quad S\times(\omf\cup\omg).
\]

Below we summarize our main assumptions regarding the coefficients and data in our problem:
\begin{itemize}
\item[\textbf{(A1)}]\textbf{Coefficient assumptions.} The fourth-order stiffness tensors $\mathcal{C}^f,\mathcal{C}^g\in\R^{3\times3\times3\times3}$ are symmetric and positive definite, i.e.,
    \[
    \mathcal{C}^l_{ijkl}=\mathcal{C}^l_{jikl}=\mathcal{C}^l_{ijlk}=\mathcal{C}^l_{klij},\quad \mathcal{C}^lM:M\ge cM:M
    \]
    for all symmetric matrices $M\in\R^{3\times3}_{\rm sym}$ and $l=f,g$ for some constant $c>0$.
    Also, the permeability matrix $K\in\R^{3\times3}$ is symmetric and positive definite, i.e., 
    \[
    K\xi\cdot\xi\ge c|\xi|^2
    \]
    for all $\xi\in\R^3$ for some constant $c>0$.
    Finally, the Biot--Willis parameter \(\alpha>0\), the storage coefficient \(c>0\), and the relaxation time \(\delta>0\) are positive.
    For the viscous storage coefficient, we distinguish between the
    cases
    \[
        \eta=0
        \qquad\text{and}\qquad
        \eta=\alpha\delta.
    \]
    \item[\textbf{(A2)}] \textbf{Data and initial conditions.}
    We impose the following assumptions on the data.

    \begin{enumerate}
        \item[\textbf{(i)}]
        In the case \(\eta=0\), we assume
        \[
            f_\e
            \in
            L^\infty\bigl(S;L^2(\Omega)^3\bigr),
            \qquad
            \partial_t f_\e^f
            \in
            L^2\bigl(S;L^2(\Omega_\e^f)^3\bigr),
        \]
        and
        \[
            g_\e
            \in
            L^2\bigl(S;L^2(\Omega_\e^g)\bigr),
            \qquad
            u_{\e,0}
            \in
            H_0^1(\Omega)^3,
            \qquad
            p_{\e,0}
            \in
            L^2(\Omega_\e^g).
        \]
        Here and below,
        \[
            f_\e^l
            :=
            f_\e|_{\Omega_\e^l},
            \qquad l\in\{f,g\}.
        \]
        Moreover, the initial displacement is compatible with the
        quasistatic equilibrium in the fibre phase
        \[
            -\dive\bigl(
                \mathcal C^f e(u_{\e,0})
            \bigr)
            =
            f_\e^f(0)
            \quad\text{in }\Omega_\e^f
        \]
        in the weak sense.

        \item[\textbf{(ii)}]
        In the case \(\eta=\alpha\delta\), we assume
        \[
            f_\e^f
            \in
            H^2\bigl(S;L^2(\Omega_\e^f)^3\bigr),
            \qquad
            f_\e^g
            \in
            H^1\bigl(S;L^2(\Omega_\e^g)^3\bigr),
        \]
        and
        \[
            g_\e
            \in
            L^2\bigl(S;L^2(\Omega_\e^g)\bigr).
        \]
        Furthermore,
        \[
            u_{\e,0},v_{\e,0}
            \in
            H_0^1(\Omega)^3,
            \qquad
            p_{\e,0}
            \in
            H^1(\Omega_\e^g).
        \]
        We set
        \[
            w_{\e,0}
            :=
            u_{\e,0}+\delta v_{\e,0}
        \]
        and assume the full mechanical compatibility condition
        \begin{empheq}[left=\empheqlbrace]{alignat*=2}
        -\dive\bigl(\mathcal C^f e(u_{\e,0})\bigr)
            &=f_\e^f(0)
            &\quad&\text{in }\Omega_\e^f,\\
        -\dive\bigl(\mathcal C^g e(w_{\e,0})-\alpha p_{\e,0}I\bigr)
            &=f_\e^g(0)
            &\quad&\text{in }\Omega_\e^g,\\
        \bigl(\mathcal C^g e(w_{\e,0})-\alpha p_{\e,0}I\bigr)n_\e
            &=\mathcal C^f e(u_{\e,0})n_\e
            &\qquad&\text{on }\Gamma_\e.
        \end{empheq}
        in the weak sense.
    \end{enumerate}
    In both cases, we define the initial microscopic fluid content by
    \[
        \zeta_{\e,0}
        :=
        c p_{\e,0}
        +\alpha\dive u_{\e,0}
        +\eta\dive v_{\e,0}
        \qquad\text{in }\Omega_\e^g,
    \]
    where the last term is omitted if $\eta=0$.
    \item[\textbf{(A3)}] \textbf{Limits of data and initial values.}
    The data and initial values are uniformly bounded with respect to
    $\e$ in the spaces specified in \textbf{(A2)}. Moreover, there exist
    \[
        f\in L^2\bigl(S;L^2(\Omega)^3\bigr),
        \qquad
        g\in L^2(S\times\Omega),
        \qquad
        u_0\in H_0^1(\Omega)^3,
    \]
    \[
        p_0\in L^2(\Omega),
        \qquad
        \zeta_0\in L^2(\Omega),
    \]
    such that, as $\e\to0$,
    \[
        f_\e\rightharpoonup f
        \quad\text{in }L^2\bigl(S;L^2(\Omega)^3\bigr),
        \qquad
        \widehat g_\e\rightharpoonup |Y^g|g
        \quad\text{in }L^2(S\times\Omega),
    \]
    and
    \[
        u_{\e,0}\rightharpoonup u_0
        \quad\text{in }H_0^1(\Omega)^3,
        \qquad
        \mathcal T_\e(\widehat{p_{\e,0}})
        \rightharpoonup
        \chi^g p_0
        \quad\text{in }L^2(\Omega\times Y),
    \]
    as well as
    \[
        \widehat{\zeta_{\e,0}}
        \rightharpoonup \zeta_0
        \quad\text{in }L^2(\Omega).
    \]
    Here, the hat denotes extension by zero from $\Omega_\e^g$ to
    $\Omega$.
    \item[\textbf{(A4)}] \textbf{Well-prepared initial data.}
    Setting
    \[
        u_{1,0}^{\rm eq}(x,y)
        :=
        \boldsymbol\eta(y):e(u_0(x))+q(y)p_0(x),
    \]
    we assume
    \[
        \mathcal T_\e(\nabla u_{\e,0})
        \to
        \nabla_xu_0+\nabla_yu_{1,0}^{\rm eq}
        \quad\text{in }L^2(\Omega\times Y)^{3\times3}.
    \]
    For $\eta=\alpha\delta$, we additionally assume that
    \[
        v_{\e,0}\rightharpoonup v_0
        \quad\text{in }H_0^1(\Omega)^3
    \]
    and that, with $w_0:=u_0+\delta v_0$, there exists
    \[
        w_{1,0}\in
        L^2\bigl(\Omega;H_\#^1(Y)^3\bigr)
    \]
    such that
    \[
        \mathcal T_\e(\nabla w_{\e,0})
        \to
        \nabla_xw_0+\nabla_yw_{1,0}
        \quad\text{in }L^2(\Omega\times Y)^{3\times3}.
    \]
\end{itemize}
Note that the convergence assumptions in \textup{(A3)}--\textup{(A4)} imply the
following compatibility relations for the limiting initial fluid
content. If $\eta=0$, then
\[
    \zeta_0
    =
    \int_{Y^g}
    \left[
        cp_0
        +
        \alpha\bigl(
            \dive_x u_0
            +
            \dive_y u_{1,0}^{\rm eq}
        \bigr)
    \right]\di y
    \quad\text{in }L^2(\Omega).
\]
If $\eta=\alpha\delta$, then
\[
    \zeta_0
    =
    \int_{Y^g}
    \left[
        cp_0
        +
        \alpha\bigl(
            \dive_x w_0
            +
            \dive_y w_{1,0}
        \bigr)
    \right]\di y
    \quad\text{in }L^2(\Omega).
\]




With this we are able to formulate the solution concept for the problem given by \cref{eq:2.1a,eq:2.1b,eq:2.1c,eq:2.1d,eq:2.1e,eq:2.1f,eq:2.1g,eq:2.1h,eq:2.1i,eq:2.1j,eq:2.1k}.

\begin{definition}[Weak solution]\label{def:weak_solution}
We say that
\[
    u_\e
    \in
    H^1(S;H_0^1(\Omega)^3),
    \qquad
    p_\e
    \in
    H^1(S;H^1(\omg)^*)
    \cap
    L^2(S;H^1(\omg)),
\]
is a weak solution to the microscopic problem if
\[
    \zeta_\e
    :=
    cp_\e
    +
    \alpha\dive u_\e
    +
    \eta\dive\partial_tu_\e
    \in
    H^1(S;H^1(\omg)^*)
\]
and the following conditions are satisfied.
For almost every \(t\in S\),
\begin{equation}\label{eq:weak_form_mechanic}
    \int_{\Omega}
        \mathcal{C}e(u_\e):e(\psi)
    \,\di{x}
    +
    \int_{\omg}
        \left(
            \delta\mathcal{C}^g e(\partial_tu_\e):e(\psi)
            -
            \alpha p_\e\dive\psi
        \right)
    \,\di{x}
    =
    \int_{\Omega}
        f_\e\cdot\psi
    \,\di{x}
\end{equation}
for every \(\psi\in H_0^1(\Omega)^3\).
Moreover,
\begin{multline}\label{eq:weak_visco_porous}
    -\int_S
        \left\langle
            \zeta_\e,
            \partial_t\varphi
        \right\rangle_{H^1(\omg)^*,H^1(\omg)}
    \,\di{t}
    +
    \int_S\int_{\omg}
        K\nabla p_\e\cdot\nabla\varphi
    \,\di{x}\di{t}
    \\
    =
    \int_S\int_{\omg}
        g_\e\varphi
    \,\di{x}\di{t}
    +
    \int_\omg
        \zeta_{\e,0}
        \varphi(0)\di x
\end{multline}
for every \(\varphi\in H^1(S;H^1(\omg))\) with \(\varphi(T)=0\).
Finally,
\[
    u_\e(0)=u_{\e,0}
    \quad\text{in }H_0^1(\Omega)^3,
    \qquad
    p_\e(0)=p_{\e,0}
    \quad\text{in }L^2(\Omega_\e^g).
\]
If $\eta=\alpha\delta$, we additionally require
\[
    w_\e:=u_\e+\delta\partial_tu_\e
    \in H^1\bigl(S;H^1(\Omega_\e^g)^3\bigr),\qquad  w_\e(0)=w_{\e,0}
    \quad\text{in }H^1(\Omega_\e^g)^3.
\]
\end{definition}

\begin{remark}
    Time regularity is a delicate issue in poroelastic systems, even in
    the purely poroelastic case $\delta=\eta=0$.
    At the natural energy level, one generally only expects the fluid content
    \[
        \zeta_\e=
        cp_\e+\alpha\dive u_\e+\eta\dive\partial_tu_\e
    \]
    to satisfy
    \(
        \zeta_\e
        \in
        H^1\bigl(S;H^1(\omg)^*\bigr).
    \)
    Consequently,
    \(
        \zeta_\e
        \in
        C\bigl([0,T];H^1(\omg)^*\bigr),
    \)
    and the natural initial condition associated with the mass balance is
    therefore
    \(
        \zeta_\e(0)=\zeta_{\e,0}
        \ \text{in }H^1(\omg)^*.
    \)
    This condition is already encoded in the time-integrated weak
    formulation of the mass-balance equation through the term involving
    $\zeta_{\e,0}$.
    At this level of regularity, neither $p_\e$ nor $u_\e$ needs to possess
    an individual time derivative, and their initial values need not be
    meaningful separately. For the corresponding weak theory of
    poroelasticity, we refer to \cite{showalter_diffusion_2000}.

    In the present work, we impose stronger assumptions on the data and
    obtain correspondingly stronger time regularity. In the case
    $\eta=0$, we have
    \[
        u_\e\in H^1\bigl(S;H_0^1(\Omega)^3\bigr),
        \qquad
        p_\e\in
        L^2\bigl(S;H^1(\omg)\bigr)
        \cap
        H^1\bigl(S;H^1(\omg)^*\bigr).
    \]
    Hence,
    \[
        u_\e\in C\bigl([0,T];H_0^1(\Omega)^3\bigr),
        \qquad
        p_\e\in C\bigl([0,T];L^2(\omg)\bigr),
    \]
    so that the initial conditions for displacement and pressure are
    well defined separately.

    In the case $\eta=\alpha\delta$, the higher-order estimates yield
    \[
        p_\e\in
        L^\infty\bigl(S;H^1(\omg)\bigr)
        \cap
        H^1\bigl(S;L^2(\omg)\bigr),
    \]
    and therefore again
    \[
        p_\e\in C\bigl([0,T];L^2(\omg)\bigr).
    \]
    Moreover, with
    \[
        w_\e:=u_\e+\delta\partial_tu_\e,
    \]
    the analysis provides sufficient time regularity to prescribe the
    corresponding initial value $w_\e(0)=w_{\e,0}$.

    The stronger assumptions on the force in the fibre phase are needed
    because the fibre deformation is governed by a quasistatic elastic
    equation and therefore does not gain time regularity from a viscous
    term. In the gel phase, by contrast, the Kelvin--Voigt contribution
    provides the additional smoothing in time.
\end{remark}

\section{Analysis of the microscale problem}\label{sec:analysis}
In this section, we tend to the analysis of the $\e$-problem and show that there is a unique weak solution in the sense of \cref{def:weak_solution} and that this solution satisfies certain $\e$-independent energy estimates.
We start by collecting some auxiliary results like extension operators and Korn inequalities.
In the analysis itself, which is outlined in \cref{sec:analysis_problem}, we distinguish between the two cases $\eta=0$ and $\eta=\delta\alpha$ as they ask for slightly different approaches.
The result for $\eta=0$ is presented in \cref{thm:existence_eta0} and the result for $\eta=\delta\alpha$ is given with \cref{lemma:existence_eta_alpha}.
The a priori estimates are given in \cref{lem:apriori} for both cases.

\subsection{Auxiliary results}\label{sec:auxiliary_results}
We start by collecting some auxiliary results which are used in the analysis of the visco-poroelasticit model.
\begin{lemma}[Extension operators]\label{lem:extension}
There exist linear extension operators
$\mathcal{E}_\e^f : H^1(\omf) \to H^1(\Omega)$ and
$\mathcal{E}_\e^g : H^1(\omg) \to H^1(\Omega)$
satisfying
\begin{alignat*}{2}
\|\mathcal{E}_\e^f\phi^f\|_{L^2(\Omega)} &\le C\|\phi^f\|_{L^2(\omf)} ,&\quad \|\mathcal{E}_\e^g\phi^g\|_{L^2(\Omega)} &\le C\|\phi^g\|_{L^2(\omg)},\\
\|\nabla(\mathcal{E}_\e^f\phi^f)\|_{L^2(\Omega)} &\le C\|\nabla\phi^f\|_{L^2(\omf)} ,&\qquad \|\nabla(\mathcal{E}_\e^g\phi^g)\|_{L^2(\Omega)} &\le C\|\nabla\phi^g\|_{L^2(\omg)},\\
\|e(\mathcal{E}_\e^f\phi^f)\|_{L^2(\Omega)} &\le C\|e(\phi^f)\|_{L^2(\omf)} ,&\quad \|e(\mathcal{E}_\e^g\phi^g)\|_{L^2(\Omega)} &\le C\|e(\phi^g)\|_{L^2(\omg)},
\end{alignat*}
for all $(\phi^f,\phi^g)\in H^1(\omf)\times H^1(\omg)$
for some $\e$-independent constant $C>0$.
\end{lemma}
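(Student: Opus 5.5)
The plan is the standard local-extension-plus-scaling argument, adapted to the connected--connected geometry as in \cite{Hoepker2014} (see also the classical construction of Oleinik--Shamaev--Yosifian and Acerbi et al.). We fix $l\in\{f,g\}$ and first work on the reference cell. Since $Y^l$ is Lipschitz and its periodic extension is connected, we choose a slightly enlarged reference set. One option is the union $\widetilde Y^l$ of the pieces of $Y^l$ and its neighbours in the $3^3$ block around $Y$. The other is to use Hoepker's decomposition into overlapping patches so that each cell only "sees" a connected Lipschitz portion of the phase. On this set we construct a linear extension $P\colon H^1(\widetilde Y^l)\to H^1(Y)$ (Stein/Calderón extension composed with cut-off) satisfying
\[
\|P\phi\|_{L^2(Y)}\le C\|\phi\|_{L^2(\widetilde Y^l)},\qquad \|\nabla P\phi\|_{L^2(Y)}\le C\|\nabla\phi\|_{L^2(\widetilde Y^l)},\qquad \|e(P\phi)\|_{L^2(Y)}\le C\|e(\phi)\|_{L^2(\widetilde Y^l)}.
\]
The gradient and strain bounds are obtained by the usual trick. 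For the gradient we subtract the mean value $m=\fint\phi$, extend $\phi-m$, and add $m$ back, using Poincaré on the connected set $\widetilde Y^l$. For the strain we instead subtract the $L^2$-projection $r$ of $\phi$ onto rigid motions $\{a+Bx: B^T=-B\}$, extend $\phi-r$, and add $r$ back. Korn's second inequality on the connected Lipschitz set $\widetilde Y^l$ then gives $\|\phi-r\|_{H^1}\le C\|e(\phi)\|_{L^2}$. Since the same operator should satisfy all three bounds simultaneously, we take the rigid-motion-corrected version, $P\phi:=r+P_0(\phi-r)$, with $P_0$ a fixed Stein extension. The $L^2$ bound holds because $r$ is an $L^2$-bounded projection. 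The gradient bound holds because $\nabla r$ is controlled by $\|\nabla\phi\|$ (the rigid projection kills constants, and its skew part is bounded by the gradient via Poincaré). The strain bound holds because $e(r)=0$.

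Next we glue. By scaling $y=x/\e$, the cell estimates hold on each $\e(Y+k)$ with the same constants: the $L^2$ and gradient/strain estimates are scale-invariant in the appropriate homogeneity. We define $\mathcal E^l_\e\phi$ on cell $k$ by the local extension built from the data on $\e(\widetilde Y^l+k)$. To obtain an $H^1$ function across cell faces, we either use a partition of unity $\{\theta_k\}$ subordinate to the enlarged cells, with $|\nabla\theta_k|\le C/\e$, or use Hoepker's construction, which extends only on the complement inside each cell while keeping $\phi$ unchanged on $\Omega^l_\e$. Keeping $\phi$ unchanged on $\Omega_\e^l$ is what guarantees traces match: the extension is only performed into $Y^{l'}$-pockets, and we arrange that each pocket's boundary lies in a region where the local patches agree. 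Summing the squared local estimates over $k\in I_\e$, with bounded overlap, gives the global estimates with $\e$-independent $C$.

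The main obstacle is precisely this gluing in the connected--connected case. The complementary phase $\Omega^{l'}_\e$ is not a union of isolated inclusions, so one cannot extend inclusion by inclusion. A partition-of-unity gluing produces terms $\nabla\theta_k\otimes(\phi_k-\phi_j)$ of size $\e^{-1}$, which must be absorbed by a local Poincaré--Korn estimate $\|\phi-r_k\|_{L^2(\e\widetilde Y+\e k)}\le C\e\|e(\phi)\|$. Here $r_k$ must be the same correction on overlapping patches, which fails in general. I would therefore follow \cite{Hoepker2014}: cut the complementary phase locally by a cell-wise decomposition, using a domain strictly between $\e(Y+k)$ and its enlargement, so that each piece to be filled is bounded by phase $l$ within a single enlarged cell. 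Then extend piecewise with a common rigid correction per piece. Near $\partial\Omega$, the boundary cells require the same treatment with the outer layer handled by the fact that $\Omega$ is exactly tiled, possibly using the trivial extension where the phase already meets $\partial\Omega$.
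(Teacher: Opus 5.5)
Your cell-level construction is sound. $P\phi=r+P_0(\phi-r)$ is linear, reproduces rigid motions (hence constants), and gives all three bounds for one operator; the scaling step is also harmless. The gap is in the gluing: you discard the argument that works and replace it with one that cannot work.

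The partition-of-unity extension $\mathcal E^l_\e\phi=\sum_k\theta_kP_k\phi$ does not need the rigid corrections used inside the different $P_k$ to coincide. Each $P_k$ is linear and fixes rigid motions, and $\sum_k\nabla\theta_k=0$. So on a cell $\e(m+Y)$ you may subtract one common rigid motion $r_m$ from all overlapping terms:
\[
\sum_k\nabla\theta_k\otimes P_k\phi=\sum_k\nabla\theta_k\otimes P_k(\phi-r_m).
\]
Here $r_m$ is the $L^2$-projection of $\phi$ onto rigid motions on one connected Lipschitz subset of $\Omega_\e^l$ that contains the reference sets of all $k$ with $|k-m|_\infty\le1$; take it to be a scaled copy of a single fixed set. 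Scaled Korn then gives $\e^{-1}\|\phi-r_m\|_{L^2}\le C\|e(\phi)\|_{L^2}$. Means plus Poincar\'e give the same for the gradient bound. This is the Acerbi et al.\ / Oleinik--Shamaev--Yosifian mechanism, and it is why the paper calls the estimates standard away from $\partial\Omega$.

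Such connected reference sets exist because the periodic phase is connected and finitely many connecting paths are compact. The $3^3$-block portion of the phase need not itself be connected, so your first choice of $\widetilde Y^l$ is not justified.

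Your substitute construction fails for a topological reason. Take an open piece of $\Omega_\e^{l'}$ whose boundary inside $\Omega$ lies in $\overline{\Omega_\e^l}$. It is relatively open and closed in the connected set $\Omega_\e^{l'}$, so it is empty or all of $\Omega_\e^{l'}$. Hence cell-sized pieces must have cuts running through $\Omega_\e^{l'}$. Pieces extended independently, each with its own rigid correction, have mismatched traces on those cuts, so the glued function is not in $H^1(\Omega)$. The ``pockets'' belong to the isolated-inclusion picture you had already excluded.

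Conversely, the step that genuinely needs an outside input is the one you pass over. Near $\partial\Omega$, the enlarged patches must be replaced by subsets of $\Omega\cap\Omega_\e^l$ that are still connected and Lipschitz, with $\e$-uniform Korn and Poincar\'e constants. The exact tiling of $\Omega$ by $\e$-cells reduces this to finitely many local configurations up to scaling, but their connectedness is not automatic. A ``trivial extension'' plays no role here, since nothing is extended outside $\Omega$.

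This boundary part is exactly what the paper does not prove but imports from H\"opker: Theorem 3.5 for the $H^1$ bounds and Theorem 4.3 for the symmetric gradients, with the interior covered by Acerbi et al. If you restore the partition-of-unity gluing and delegate the boundary the same way, your plan coincides with the paper's.
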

\begin{proof}
First, extension operators must exist since $\omf$ and $\omg$ are Lipschitz domains.
The $\e$-uniform estimates are standard as long as we are away from the outer boundary (see \cite{ACP92}) but the estimates get more technical at the boundary because both domains are connected. 
We refer to \cite[Theorem 3.5 for the $H^1$ norms and Theorem 4.3 for the linearized gradients]{hopker_extension_2016}.
\end{proof}
\begin{lemma}[Uniform Korn inequalities]\label{lem:korn}
There exists a constant \(C>0\), independent of \(\e\), such that
\[
    \|v\|_{H^1(\Omega)}
    \leq
    C\|e(v)\|_{L^2(\Omega)}
    \qquad
    \text{for all }v\in H_0^1(\Omega)^3,
\]
and, for \(r\in\{f,g\}\),
\[
    \|v\|_{H^1(\Omega_\e^r)}
    \leq
    C\|e(v)\|_{L^2(\Omega_\e^r)}
    \qquad
    \text{for all }
    v\in H_{(\partial\Omega)_\e^r}^1(\Omega_\e^r)^3.
\]
\end{lemma}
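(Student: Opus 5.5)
The first inequality does not involve $\e$ at all. For $v\in H_0^1(\Omega)^3$, I would extend by zero to $\R^3$ and integrate by parts twice. This gives the classical identity
\[
2\|e(v)\|_{L^2(\Omega)}^2=\|\nabla v\|_{L^2(\Omega)}^2+\|\dive v\|_{L^2(\Omega)}^2 .
\]
Hence $\|\nabla v\|_{L^2}\le\sqrt2\,\|e(v)\|_{L^2}$. Poincaré's inequality on the bounded domain $\Omega$ then controls $\|v\|_{L^2}$ by $\|\nabla v\|_{L^2}$, so the first estimate holds with a constant depending only on $\Omega$.

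For the perforated domains, with $r\in\{f,g\}$ and $v\in H^1_{(\partial\Omega)_\e^r}(\Omega_\e^r)^3$, I would reduce to the first estimate using the extension operators of \cref{lem:extension}. Set $V:=\mathcal E_\e^r v\in H^1(\Omega)^3$. By \cref{lem:extension},
\[
\|e(V)\|_{L^2(\Omega)}\le C\|e(v)\|_{L^2(\Omega_\e^r)}
\]
with $C$ independent of $\e$. If $V\in H_0^1(\Omega)^3$, the first inequality gives
\[
\|v\|_{H^1(\Omega_\e^r)}\le\|V\|_{H^1(\Omega)}\le C\|e(V)\|_{L^2(\Omega)}\le C\|e(v)\|_{L^2(\Omega_\e^r)},
\]
which is the claim.

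The main obstacle is exactly the boundary condition of the extension: we need $V$ to vanish on all of $\partial\Omega$, not only on $(\partial\Omega)_\e^r$. The extension operators in \cite{hopker_extension_2016} are built cellwise, and near the outer boundary they are designed to respect zero traces. My first attempt would therefore be to invoke the version of \cite[Theorem 4.3]{hopker_extension_2016} that maps $H^1_{(\partial\Omega)_\e^r}(\Omega_\e^r)$ into $H^1_0(\Omega)$, or its variant for vectors vanishing on the outer boundary part, and check that the tiling of $\Omega$ by whole $\e$-cells meets its hypotheses.

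If only a generic extension is available, I would repair it in two steps. First, extend $v$ by zero across $\partial\Omega$ into a layer of cells outside $\Omega$. This is admissible because $v$ vanishes on $(\partial\Omega)_\e^r$ and $\Omega$ is a union of full cells. Second, apply the interior cellwise extension of \cite{ACP92} in this enlarged domain, where every cell now lies away from the boundary. The cellwise construction uses local Korn inequalities on $Y^r$ modulo rigid motions, and by scaling these give $\e$-independent constants. A cell of the outer layer, in which $v$ vanishes identically on its $r$-part, is extended by the zero rigid motion. Consequently $V=0$ on $\partial\Omega$, and the first step applies to $V$ to finish the argument.
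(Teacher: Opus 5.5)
Your route is the paper's route: extend $v$ with $\mathcal E_\e^r$ from \cref{lem:extension} and apply Korn's inequality on $\Omega$ to the extension. Your proof of the first inequality via $2\|e(v)\|^2=\|\nabla v\|^2+\|\dive v\|^2$ and Poincar\'e is fine. The paper's one-line proof applies Korn on $\Omega$ to $\mathcal E_\e^r v$ without checking that this function vanishes on $\partial\Omega$. You correctly isolated that point, but your fallback repair of it does not work as written.

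The extension of \cite{ACP92} is not cellwise. In a connected--connected geometry the other phase reaches $\partial Y$, so cells filled independently would not have matching traces on shared faces. ACP92 therefore glues local extensions from overlapping blocks of neighbouring cells with a partition of unity, and the result is only defined at distance $\gtrsim k_0\e$ from the boundary of the enlarged domain. Consequently, in an outer cell adjacent to $\partial\Omega$, $V$ is assembled from blocks that contain cells of $\Omega$ where $v\neq0$. It is not the zero rigid motion there, so $V$ need not vanish on the other-phase part of $\partial\Omega$, and $V\in H_0^1(\Omega)$ does not follow. With a single layer, $V$ may not even be defined on all of $\Omega$.

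The fix is small. Zero-extend into a fixed number $m$ of layers, with $m$ larger than the block radius plus $k_0$. Then $V$ is defined on an enlarged cube union $\Omega^+\supset\Omega$ and vanishes identically near $\partial\Omega^+$, so you apply your first inequality on $\Omega^+$ instead of $\Omega$. This is legitimate for two reasons: $\|\nabla V\|\le\sqrt2\|e(V)\|$ holds for $H_0^1$ functions on any domain, and $\Omega^+$ lies in a fixed bounded set, so its Poincar\'e constant is uniform in $\e$.

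Note also that \cite{ACP92} only controls full gradients. The bound $\|e(V)\|_{L^2}\lesssim\|e(v)\|_{L^2(\Omega_\e^r)}$ needs the rigid-motion version of the block construction, i.e.\ Korn modulo rigid motions on connected blocks, which is what \cite[Theorem 4.3]{hopker_extension_2016} supplies. Since that operator is local in the same way, the thickened-layer trick applies to it directly.
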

\begin{proof}
On $\Omega$, this is the standard Korn inequality; the uniform version on subdomains follows via the uniform extension operators in \cref{lem:extension}:
\[
\|\nabla v\|_{L^2(\omf)}\le \|\nabla (\mathcal{E}_\e^fv)\|_{L^2(\Omega)}\lesssim \|e( \mathcal{E}_\e^fv)\|_{L^2(\Omega)}\lesssim\|e(v)\|_{L^2(\omf)}
\]
and the same argument over $\omg$.
\end{proof}

\begin{lemma}[Uniform Poincaré inequalities]
\label{lem:poincare}
There exists a constant \(C>0\), independent of \(\e\), such that
\[
    \|v\|_{H^1(\Omega)}
    \leq
    C\|\nabla v\|_{L^2(\Omega)}
    \qquad
    \text{for all }v\in H_0^1(\Omega),
\]
and, for \(r\in\{f,g\}\),
\[
    \|v\|_{H^1(\Omega_\e^r)}
    \leq
    C\|\nabla v\|_{L^2(\Omega_\e^r)}
    \qquad
    \text{for all }
    v\in H_{(\partial\Omega)_\e^r}^1(\Omega_\e^r).
\]
\end{lemma}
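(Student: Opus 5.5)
The plan mirrors the proof of \cref{lem:korn}: treat the whole-domain estimate classically, then reduce the subdomain estimates to it via the uniform extension operators of \cref{lem:extension}. On $\Omega$ itself, the first inequality is the classical Poincaré inequality for $H_0^1(\Omega)$. Here $\Omega$ is fixed and bounded, so the constant depends only on $\Omega$ and not on $\e$. Then $\|v\|_{H^1(\Omega)}^2=\|v\|_{L^2}^2+\|\nabla v\|_{L^2}^2\le (C_P^2+1)\|\nabla v\|_{L^2(\Omega)}^2$.

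For $r\in\{f,g\}$ and $v\in H^1_{(\partial\Omega)_\e^r}(\Omega_\e^r)$, I apply $\mathcal E_\e^r$ (scalar version, as in \cref{lem:extension}) and estimate
\[
\|v\|_{L^2(\Omega_\e^r)}\le\|\mathcal E_\e^r v\|_{L^2(\Omega)}\le C_P\|\nabla(\mathcal E_\e^r v)\|_{L^2(\Omega)}\le C_PC\|\nabla v\|_{L^2(\Omega_\e^r)}.
\]
The gradient bound is then immediate, giving $\|v\|_{H^1(\Omega_\e^r)}\le C\|\nabla v\|_{L^2(\Omega_\e^r)}$ with $C$ independent of $\e$.

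The only nontrivial point, and the main obstacle, is the middle inequality: it requires $\mathcal E_\e^r v\in H_0^1(\Omega)$. In other words, the extension must preserve the zero trace on $(\partial\Omega)_\e^r$ and also vanish on the complementary part $(\partial\Omega)_\e^{r'}$ of $\partial\Omega$, where the extension is newly created. This is exactly why the boundary-adapted construction of \cite{hopker_extension_2016} is needed rather than the interior one of \cite{ACP92}. I would invoke the version of those operators mapping $H^1_{(\partial\Omega)_\e^r}(\Omega_\e^r)$ into $H_0^1(\Omega)$; the same property was implicitly used in the proof of \cref{lem:korn}.

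If only a weaker operator is available, I would use a fallback. Extend $\Omega$ by one layer of $\e$-cells to $\widetilde\Omega_\e\supset\Omega$, and extend $v$ by zero across $(\partial\Omega)_\e^r$. This is admissible because the trace vanishes there. The result is a function on the enlarged perforated domain, to which the interior extension applies locally. A Poincaré inequality on a fixed bounded neighbourhood of $\Omega$ then closes the argument. This requires the zero-extended function to vanish on a portion of the boundary of positive measure in every direction, which holds since $(\partial\Omega)_\e^r$ has measure comparable to $|\partial\Omega|$ uniformly in $\e$ by periodicity.

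An alternative that avoids this boundary subtlety is the classical compactness (contradiction) argument. It does not give $\e$-uniformity directly, however, so I would rely on the extension route.
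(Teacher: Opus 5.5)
Your proposal is correct and follows the same route as the paper: the classical Poincaré inequality on the fixed domain $\Omega$, transferred to $\Omega_\e^r$ through the $\e$-uniform extension operators of \cref{lem:extension}. You are more careful than the paper in noting that this transfer needs $\mathcal E_\e^r v$ to vanish on $\partial\Omega$, which \cref{lem:extension} does not assert; your fallback's measure observation settles this, since $|(\partial\Omega)_\e^r|$ is bounded below uniformly in $\e$, so the trace inequality and Poincaré--Wirtinger on $\Omega$ already give an $\e$-uniform Poincaré inequality for $H^1(\Omega)$-functions vanishing only on $(\partial\Omega)_\e^r$, which applies directly to $\mathcal E_\e^r v$ without enlarging $\Omega$.
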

\begin{proof}
For $\Omega$ this is clear since the domain does not depend on $\e$. For $\omf$ and $\omg$, this follows again by using the uniform extensions from \cref{lem:extension}.
\end{proof}

\begin{lemma}[Discrete Grönwall inequality]
\label{lem:gronwall}
Let \(\xi\geq0\), and let \((u_n)_{n\geq1}\) and
\((a_n)_{n\geq1}\) be sequences of non-negative real numbers satisfying
\[
    u_n
    \leq
    \xi
    +
    \sum_{i=1}^{n-1}a_i u_i,
    \qquad n\geq1.
\]
Then
\[
    u_n
    \leq
    \xi\exp\left(\sum_{i=1}^{n-1}a_i\right),
    \qquad n\geq1.
\]
\end{lemma}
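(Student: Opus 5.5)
The plan is induction on $n$, applied to the partial sums rather than to $u_n$ itself. For $n\ge1$ define
\[
    S_n:=\xi+\sum_{i=1}^{n-1}a_iu_i,
\]
with the convention $S_1=\xi$ (empty sum). The hypothesis says exactly $u_n\le S_n$ for all $n\ge1$. Since every $a_i$ and $u_i$ is non-negative, the sequence $(S_n)$ is non-decreasing and non-negative. It therefore suffices to prove
\[
    S_n\le\xi\exp\Bigl(\sum_{i=1}^{n-1}a_i\Bigr),
\]
because the claimed bound on $u_n$ then follows at once from $u_n\le S_n$.

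The key step is the one-step recursion. For every $n\ge1$,
\[
    S_{n+1}=S_n+a_nu_n\le S_n+a_nS_n=(1+a_n)S_n\le e^{a_n}S_n.
\]
The first inequality uses $u_n\le S_n$ together with $a_n\ge0$. The second uses the elementary inequality $1+x\le e^x$ for $x\ge0$, together with $S_n\ge0$.

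Iterating from $S_1=\xi$ gives
\[
    S_n\le\xi\prod_{i=1}^{n-1}e^{a_i}=\xi\exp\Bigl(\sum_{i=1}^{n-1}a_i\Bigr).
\]
Formally this is an induction on $n$. The base case $n=1$ holds with equality, since the sum is empty, and the inductive step is exactly the recursion above.

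I expect no real obstacle. The only points needing care are the empty-sum convention at $n=1$ and the sign conditions $a_i,u_i,\xi\ge0$. These ensure that multiplying the inequality $u_n\le S_n$ by $a_n$ preserves its direction, and that $S_n\ge0$, so $(1+a_n)S_n\le e^{a_n}S_n$ is legitimate. Note that the recursion uses only the bound on $u_n$ at the current step, so no strong induction hypothesis on earlier $u_i$ is required beyond what is built into $S_n$.
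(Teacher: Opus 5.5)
Your argument is correct. With $S_n=\xi+\sum_{i=1}^{n-1}a_iu_i$, the step $S_{n+1}=S_n+a_nu_n\le(1+a_n)S_n\le e^{a_n}S_n$ uses exactly the non-negativity of $a_n$ and of $S_n$. Iterating from $S_1=\xi$ then gives the claim.

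The paper gives no argument of its own; it simply cites \cite[Lemma 7.1]{thomee_galerkin_2006}. Your partial-sum induction is the standard proof behind such results, so it makes the lemma self-contained at the cost of a few lines. It also shows clearly that the lemma is of \emph{explicit} type: the sum stops at $n-1$, so the current index never appears on the right. That matters when the lemma is applied. In \eqref{eq:estimates_with_raw_pressure}, for example, the right-hand side contains the current-index term $Ch\bigl(\|p^j\|^2+\|u^{j+1}\|^2\bigr)$. This term must first be absorbed into the left-hand side for $h$ small before the lemma, in the stated form, can be used.
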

\begin{proof}
    We refer to \cite[Lemma 7.1]{thomee_galerkin_2006}.
\end{proof}

\begin{lemma}[Uniform Poincaré--Wirtinger inequality]
\label{lem:poincare_wirtinger}
There exists a constant \(C>0\), independent of \(\e\), such that
\[
    \left\|
        v-\fint_{\Omega_\e^r}v\,\di x
    \right\|_{L^2(\Omega_\e^r)}
    \leq
    C\|\nabla v\|_{L^2(\Omega_\e^r)}
\]
for every \(v\in H^1(\Omega_\e^r)\) and \(r\in\{f,g\}\).
Equivalently,
\[
    \|v\|_{L^2(\Omega_\e^r)}
    \leq
    C\|\nabla v\|_{L^2(\Omega_\e^r)}
\]
for every
\[
    v\in H_\diamond^1(\Omega_\e^r)
    :=
    \left\{
        v\in H^1(\Omega_\e^r):
        \fint_{\Omega_\e^r}v\,\di x=0
    \right\}.
\]
\end{lemma}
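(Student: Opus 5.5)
The plan is to reduce the estimate to the fixed domain $\Omega$ with the uniform extension operators of \cref{lem:extension}, and then apply the classical Poincaré--Wirtinger inequality on $\Omega$. Given $v\in H^1(\Omega_\e^r)$, set $V:=\mathcal E_\e^r v\in H^1(\Omega)$ and $m:=\fint_\Omega V\,\di x$. Since $\Omega$ is a bounded connected Lipschitz domain independent of $\e$, there is a constant $C_\Omega$ with
\[
\|V-m\|_{L^2(\Omega)}\le C_\Omega\|\nabla V\|_{L^2(\Omega)}\le C_\Omega C\|\nabla v\|_{L^2(\Omega_\e^r)},
\]
where the last step is the gradient bound in \cref{lem:extension}. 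Restricting to $\Omega_\e^r$, where $V=v$, gives
\[
\|v-m\|_{L^2(\Omega_\e^r)}\le C\|\nabla v\|_{L^2(\Omega_\e^r)}
\]
with $C$ independent of $\e$.

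The second step replaces the constant $m$ by the mean over $\Omega_\e^r$. The mean over $\Omega_\e^r$ is the $L^2(\Omega_\e^r)$-orthogonal projection onto constants. It is therefore the best constant approximation, so
\[
\Bigl\|v-\fint_{\Omega_\e^r}v\,\di x\Bigr\|_{L^2(\Omega_\e^r)}\le \|v-m\|_{L^2(\Omega_\e^r)}.
\]
Combining this with the first step yields the first claimed inequality. The only uniformity issue is the extension constant, which \cref{lem:extension} already provides independently of $\e$, including near $\partial\Omega$ where both phases touch the boundary.

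The equivalent formulation follows immediately: for $v\in H_\diamond^1(\Omega_\e^r)$ the mean vanishes, so the left-hand side is just $\|v\|_{L^2(\Omega_\e^r)}$. Conversely, applying the $H^1_\diamond$ version to $v-\fint_{\Omega_\e^r} v\,\di x$ recovers the general statement.

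The main obstacle is that the argument relies on the gradient estimate in \cref{lem:extension} holding for arbitrary $v\in H^1(\Omega_\e^r)$ with no boundary condition on $(\partial\Omega)_\e^r$. This is the delicate boundary version cited from \cite{hopker_extension_2016}, and it is the reason the connectedness of both $\Omega_\e^f$ and $\Omega_\e^g$ is needed. The lemma as stated asserts exactly this bound for all of $H^1$, so I would simply invoke it. If only a version modulo constants were available, I would instead apply it to $v-\fint_{\Omega_\e^r}v\,\di x$, which yields the same conclusion.
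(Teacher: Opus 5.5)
Your proof is correct and follows the paper's route: you extend with the uniform operator from \cref{lem:extension}, apply the Poincar\'e--Wirtinger inequality on the fixed domain $\Omega$, and restrict back to $\Omega_\e^r$. The only difference is the last step: you pass from the mean of the extension to the mean over $\Omega_\e^r$ via the $L^2(\Omega_\e^r)$ best-approximation property of the average, whereas the paper bounds $\bigl|\fint_\Omega \widetilde v\,\di x\bigr|$ directly using $|\Omega_\e^r|=|Y^r|\,|\Omega|$, so your version is slightly cleaner in that it needs no lower bound on $|\Omega_\e^r|$.
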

\begin{proof}
It suffices to consider
\(v\in H_\diamond^1(\Omega_\e^r)\).
Let
\[
    \widetilde v:=E_\e^r v\in H^1(\Omega)
\]
be the uniform extension from \(\Omega_\e^r\) to \(\Omega\), and denote
\[
    \overline{\widetilde v}
    :=
    \fint_\Omega \widetilde v\,\di x.
\]
The Poincaré--Wirtinger inequality on the fixed domain \(\Omega\) gives
\[
    \|\widetilde v-\overline{\widetilde v}\|_{L^2(\Omega)}
    \leq
    C\|\nabla\widetilde v\|_{L^2(\Omega)}.
\]
Since \(\widetilde v=v\) on \(\Omega_\e^r\) and
\(\fint_{\Omega_\e^r}v\,\di x=0\), we have
\[
    \overline{\widetilde v}
    =
    -\fint_{\Omega_\e^r}
        \bigl(\widetilde v-\overline{\widetilde v}\bigr)\,\di x.
\]
Consequently,
\[
    |\overline{\widetilde v}|
    \leq
    |\Omega_\e^r|^{-1/2}
    \|\widetilde v-\overline{\widetilde v}\|_{L^2(\Omega_\e^r)}
    \leq
    C\|\nabla\widetilde v\|_{L^2(\Omega)}.
\]
Here, we have used that
\[
    |\Omega_\e^r|=|Y^r|\,|\Omega|
\]
and hence \(|\Omega_\e^r|\) is bounded away from zero uniformly in
\(\e\). Therefore,
\[
\begin{aligned}
    \|v\|_{L^2(\Omega_\e^r)}
    &\leq
    \|\widetilde v-\overline{\widetilde v}\|_{L^2(\Omega)}
    +
    |\Omega|^{1/2}|\overline{\widetilde v}| \\
    &\leq
    C\|\nabla\widetilde v\|_{L^2(\Omega)}
    \leq
    C\|\nabla v\|_{L^2(\Omega_\e^r)},
\end{aligned}
\]
where the last inequality follows from the uniform estimate for the
extension operator.
\end{proof}

\subsection{Analysis of the visco-poroelasticity problem}\label{sec:analysis_problem}

For the ease of notation, we suppress the parameter $\e$ in the functions an the definition of the operators below, i.e., we write $u$ instead of $u_\e$ and $\mathcal{E}$ instead of $\mathcal{E}_\e$.

We start by introducing the linear and continuous operators $\mathcal{E}^r\colon H_0^1(\Omega)^3\to H^{-1}(\Omega)^3$ $(r=f,g)$ defined via
\[
\langle \mathcal{E}^ru,\psi\rangle_{H^{-1}(\Omega)^3}=\int_{\Omega_\e^r}C^re(u):e(\psi)\di{x}\quad (\psi\in H^1_0(\Omega)^3).
\]
as well as the overall elasticity operator
\[
\mathcal{E}\colon H_0^1(\Omega)^3\to H^{-1}(\Omega)^3,\quad \mathcal{E}=\mathcal{E}^f+\mathcal{E}^g.
\]
As $C^f,C^g$ are positive definite, $\mathcal{E}$ is continuously invertible as a function from $H_0^1(\Omega)^3$ to $H^{-1}(\Omega)^3$.
Similarly, $\mathcal{E}^r$ are continuously invertible when viewed as operators for $H_0^1(\Omega_\e^r)\to H^{-1}(\Omega_\e^r)$.
We also introdue
\begin{alignat*}{2}
\nabla^g&\colon L^2(\Omega_\e^g)\to H^{-1}(\Omega)^3,&\quad\langle\nabla^g p,\psi\rangle_{H^{-1}(\Omega)^3}&=-\int_\omg p\dive\psi\di{x},\\
\dive^g
&\colon H_0^1(\Omega)^3\to L^2(\Omega_\e^g),&\qquad\langle\dive^g u,\phi\rangle&:=\int_{\Omega_\e^g}\dive u\,\phi\,\di{x},\\
\mathcal{K}&\colon H^1(\Omega_\e^g)\to H^1(\Omega_\e^g)^*,&\quad \langle\mathcal{K}p,\phi\rangle_{H^1(\Omega_\e^g)^*}&=\int_{\omg}K\nabla p\cdot\nabla\phi\di{x}.
\end{alignat*}
Using these operators, we can write \cref{eq:weak_form_mechanic,eq:weak_visco_porous} abstractly as
\begin{subequations}\label{system:abstract_operator_problem}
\begin{align}
    \mathcal{E}u+\delta\mathcal{E}^g\partial_tu+\alpha\nabla^gp&=f\quad\text{in}\ H^{-1}(\Omega)^3,\ t\in S,\label{system:abstract_operator_problem:VE}\\
    \partial_t(cp+\dive^g (\alpha u+\eta\partial_tu))+\mathcal{K}p&=g\quad\text{in}\ H^1(\omg)^*,\ t\in S.\label{system:abstract_operator_problem:PE}
\end{align}
\end{subequations}
Please note: for any $p\in L^2(\Omega_\e^g)$ and any $f\in L^2(\Omega)$, there is a unique function $v\in H^1_0(\Omega)^3$ solving the purely elastic problem
\[
\mathcal{E}v+\alpha\nabla^g p=f\quad\text{in}\ H^{-1}(\Omega)^3
\]
or alternatively
\begin{equation*}
v=\mathcal{E}^{-1}f-\alpha\mathcal{E}^{-1}\nabla^gp\quad\text{in}\ H_0^1(\Omega)^3.
\end{equation*}
This can be naturally transferred to the quasi-static case:  for any $p\in L^2(S\times\Omega_\e^g)$ and any $f\in L^2(S\times\Omega)$, $v\in L^2(S;H^1_0(\Omega))$ satisfies
\begin{equation}\label{eq:solution_v}
v(t)=\mathcal{E}^{-1}f(t)-\alpha\mathcal{E}^{-1}\nabla^gp(t)\quad\text{in}\ H_0^1(\Omega)^3,\ t\in S.
\end{equation}

We are tempted to go to the viscoelastic case from here via $v=u+\delta\partial_tu$ by solving the resulting ODE (see, e.g., \cite[Section 3.2]{BMW23} where this strategy is outlined in detail for a viscoelasticity problem).
However, this is not possible in our case due to the jump between elastic and viscoelastic material behavior at $\Gamma_\e$.
Note that the function 
\[
v=\begin{cases}
    u \quad&\text{in}\ S\times\omf\\
    u+\delta\partial_tu\quad&\text{in}\ S\times\omg
\end{cases}
\]
may jump across the interface $\Gamma_\e$;
consequently, $v\notin H^1(\Omega)$ is possible and $v$ can not be the function given in \cref{eq:solution_v}.
We resolve this issue by following a Rothe-type approach where we approximate the time derivative by difference quotients; similar to what has been done in \cite{BGSW16}.

To that end, for $N\in\mathbb{N}$, let $h=T/N>0$ denote the (uniform) time step size and $t^i=ih$ the time steps for $i=0,..., N$.
We introduce 
\[
f^{i}(x):=\frac{1}{h}\int_{t^i-h}^{t^i}f(\tau,x)\di{\tau},\quad g^{i}(x):=\frac{1}{h}\int_{t^i-h}^{t^i} g(\tau,x)\di{\tau}
\]
and see that $f^i\in L^2(\Omega)^3$ and $g^i\in L^2(\omg)$.
Moreover, the piecewise constant interpolations
\[
f_{h}(t,x):=\sum_{i=1}^Nf^{i}(x)\mathds{1}_{(t^i-h,t^i)}(t),\quad g_{h}(t,x):=\sum_{i=1}^Ng^{i}(x)\mathds{1}_{(t^i-h,t^i)}(t)
\]
converge strongly in $L^2$ for $h\to0$, i.e., $f_h\to f$ in $L^2(S\times\Omega)$ and $g_h\to g$ in $L^2(S\times\omg)$.
We consider slightly different Rothe discretizations for the two cases  which are tailored to their specific coupling structure.
We start with the easier case where $\eta=0$.

\subsubsection{Case $(i)$, $\eta=0$}\label{sssec:eta0}
In this case, we look at the following time discrete problem $(P_h)$ (after multiplication with $h$ and $h^2$, repectively): Set $u^0=u_0$ and $p^0=p_0$.
Then, find $(u^i)_{i=1}^N\subset H_0^1(\Omega)^3$ and $(p^{i})_{i=1}^{N-1}\subset H^1(\omg)$ such that
\begin{subequations}\label{eq:rothe_discretization}
\begin{empheq}[left=(P_h)\empheqlbrace]{alignat=2}
h\mathcal{E}u^{i+1}+\delta\mathcal{E}^g(u^{i+1}-u^{i})+h\alpha\nabla^gp^{i}&=hf^{i+1}&\quad&\text{in}\ H^{-1}(\Omega)^3,\label{eq:rothe_discretization:mechanics}\\
ch(p^{i}-p^{i-1})+h\alpha\dive^g(u^{i+1}-u^{i})+h^2\mathcal{K}p^{i}
&=h^2g^{i}&\quad&\text{in}\ H^{1}(\omg)^*-.\label{eq:rothe_discretization:visco_porous}
\end{empheq}
\end{subequations}
\begin{remark}
    The scheme \((P_h)\) is staggered in time: the displacement and
    pressure are associated with shifted time levels. After determining
    \(u^1\) from the initial data, each subsequent step consists of a
    coupled problem for \((u^{i+1},p^i)\), with
    \((u^i,p^{i-1})\) given.
\end{remark}

\begin{lemma}[Well-posedness of the discrete problem $(P_h)$]
\label{lemma:existence_eta0_discrete}
The discrete problem \((P_h)\) admits a unique family of solutions
\[
    (u^i)_{i=1}^N\subset H_0^1(\Omega)^3,
    \qquad
    (p^i)_{i=1}^{N-1}\subset H^1(\omg),
\]
Moreover, there exists a constant \(C>0\), independent of \(h\), such that
\begin{equation*}
    \max_{0\leq i\leq N-1}\left(
        \|u^{i+1}\|_{H^1(\Omega)}^2
    +
        \|p^i\|_{L^2(\omg)}^2\right)
    \\
    +
    h\sum_{i=1}^{N-1}
    \left(
        \|\nabla p^i\|_{L^2(\omg)}^2
        +
        \delta
        \|e(\derh u^{i+1})\|_{L^2(\omg)}^2
    \right)
    \leq C.
\end{equation*}
In addition,
\[
    \|u^1-u^0\|_{H^1(\omg)}\longrightarrow0
    \qquad\text{as }h\to0.
\]
\end{lemma}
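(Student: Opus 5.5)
Writing $\derh u^{i+1}:=(u^{i+1}-u^i)/h$, the plan is to handle existence step by step with Lax--Milgram and then obtain the bounds from a discrete energy identity.

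\emph{Existence and uniqueness.} First, $u^1$ is determined from the initial data alone. For $i=0$, \eqref{eq:rothe_discretization:mechanics} reads
\[
h\mathcal{E}u^1+\delta\mathcal{E}^g u^1=\delta\mathcal{E}^g u^0+hf^1-h\alpha\nabla^g p^0.
\]
The operator $h\mathcal{E}+\delta\mathcal{E}^g$ is coercive on $H_0^1(\Omega)^3$ by \cref{lem:korn}, so $u^1$ exists and is unique. For $i\ge1$, with $(u^i,p^{i-1})$ known, the unknowns $(u^{i+1},p^i)$ solve a coupled linear saddle-type system. I would multiply the mechanics equation by $\alpha$ and test it with $\psi$, and test the pressure equation with $\varphi/h$. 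This gives the bilinear form
\[
\alpha h\langle\mathcal{E}u,\psi\rangle+\alpha\delta\langle\mathcal{E}^gu,\psi\rangle-\alpha^2h\int_\omg p\dive\psi+\int_\omg (cp\varphi+\alpha\dive u\,\varphi)+h\int_\omg K\nabla p\cdot\nabla\varphi.
\]
Choosing $(\psi,\varphi)=(u,\alpha h p)$ is not directly symmetric. It is simpler to rescale so that the two coupling terms $\pm\alpha h\int p\dive u$ cancel: test mechanics with $\psi=u$ and the pressure equation (divided by $h$) with $\varphi=p$. The cross terms $h\alpha\int p\dive u$ and $-h\alpha\int p\dive u$ then cancel. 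What remains is coercivity in $H_0^1\times H^1(\omg)$, coming from $h\mathcal{E}$, $c\|p\|^2$ and $h\mathcal{K}$. Lax--Milgram therefore gives unique solvability at each step.

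\emph{Energy estimate.} Test \eqref{eq:rothe_discretization:mechanics} (divided by $h$) with $u^{i+1}-u^i$, and \eqref{eq:rothe_discretization:visco_porous} (divided by $h$) with $p^i$. Adding the two, the coupling terms $\alpha\int p^i\dive(u^{i+1}-u^i)$ cancel exactly. Using $2a(a-b)=a^2-b^2+(a-b)^2$ for the symmetric forms $\langle\mathcal{E}\cdot,\cdot\rangle$ and $c\|\cdot\|^2$, one obtains
\[
\tfrac12\big(\|u^{i+1}\|_{\mathcal{E}}^2-\|u^i\|_{\mathcal{E}}^2\big)+\delta h\|\derh u^{i+1}\|_{\mathcal{E}^g}^2+\tfrac c2\big(\|p^i\|^2-\|p^{i-1}\|^2\big)+h\|K^{1/2}\nabla p^i\|^2\le (f^{i+1},u^{i+1}-u^i)+h(g^i,p^i).
\]
Summing over $i$ telescopes the left-hand side. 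The $g$-term is handled by Young's inequality together with the pressure norm, and then the discrete Grönwall \cref{lem:gronwall}. The main obstacle is the force term: $u^{i+1}-u^i$ is controlled viscously only on $\omg$, while in the fibre phase nothing controls the increment. I would therefore use summation by parts,
\[
\sum_i(f^{i+1},u^{i+1}-u^i)=(f^N,u^N)-(f^1,u^1)-\sum_i(f^{i+1}-f^i,u^i).
\]
The boundary terms are absorbed into $\|u^N\|_{H^1}^2$ by Young's inequality and \cref{lem:korn}. The sum is bounded by $\|\partial_tf\|_{L^2}$ times $\max_i\|u^i\|$, where on the fibre phase we use $\partial_tf^f\in L^2$. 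On the gel phase one may instead keep $(f^{g},h\derh u)$ and absorb it into $\delta h\|e(\derh u)\|^2_{\omg}$ via the extension and Korn estimates of \cref{lem:korn}; this is why $\partial_t f$ is assumed only on the fibre phase. This yields the uniform $\max$ and sum bounds, and \cref{lem:korn} converts the elastic energy into the $H^1$ norm.

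\emph{Convergence of $u^1-u^0$.} Subtract the compatibility condition of (A2)(i), which states $\mathcal{E}^f u^0=f^f(0)$ weakly, from the first step and test with $u^1-u^0$. This gives
\[
h\|u^1-u^0\|_{\mathcal{E}}^2+\delta\|u^1-u^0\|_{\mathcal{E}^g}^2=h\big(f^1-\mathcal{E}u^0-\alpha\nabla^gp^0,\;u^1-u^0\big).
\]
The fibre part of the residual is $f^{f,1}-f^f(0)$, which is $O(h^{1/2})$ in $L^2$ since $\partial_t f^f\in L^2$. The gel part is bounded by $C(\|u^0\|_{H^1}+\|p^0\|+\|f\|_\infty)$. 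Absorbing terms gives $\delta\|e(u^1-u^0)\|_{L^2(\omg)}^2\le Ch$, and since $u^1-u^0$ vanishes on $(\partial\Omega)_\e^g$, \cref{lem:korn} gives $\|u^1-u^0\|_{H^1(\omg)}\to0$.
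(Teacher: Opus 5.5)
Your proposal is correct and follows essentially the same route as the paper. The shared steps are: Lax--Milgram via the cancellation $\langle\nabla^g p,u\rangle=-\langle\dive^g u,p\rangle$; the energy identity from testing with $(u^{i+1}-u^i,p^i)$; summation by parts in time on the fibre phase and absorption into the viscous dissipation on the gel phase; the discrete Grönwall inequality; and testing the first step with $u^1-u^0$.

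There are two small slips, neither of which affects the result:
\begin{itemize}
\item In the coercivity step, the cross terms cancel only if both equations carry the same power of $h$. Keep the $(P_h)$ scaling (or divide both equations by $h$) instead of dividing only the mass balance by $h$.
\item For $u^1-u^0\to0$ you do not need the compatibility condition. It is posed in $\Omega_\e^f$, so it does not apply directly to test functions that are nonzero on $\Gamma_\e$. Young's inequality applied to your identity already gives $\tfrac h2\|u^1-u^0\|_{\mathcal E}^2+\delta\|u^1-u^0\|_{\mathcal E^g}^2\le Ch$, which is exactly how the paper argues.
\end{itemize}
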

\begin{proof}
    \textit{\underline{Step 1: Well-posedness of the discrete system.}}
    The existence of solutions to the discretized problem $(P_h)$ follows by induction:
    First, we immediately get $u^{1}\in H_0^1(\Omega)^3$ from \cref{eq:rothe_discretization:mechanics} using $u^0=u_0$ and $p^0=p_0$ with the Lax-Milgram lemma.
    Now, let \(1\leq i\leq N-1\), and assume that
    \[
        (u^i,p^{i-1})
        \in
        H_0^1(\Omega)^3
        \times H^1(\omg)
    \]
    is already known.
    We introduce the operator $\mathcal{L}\colon H_0^1(\Omega)^3\times H^1(\omg)\to H^{-1}(\Omega)^3\times H^1(\omg)^*$ by
    \[
    \mathcal{L}=\begin{pmatrix}
          h\mathcal{E}+\delta\mathcal{E}^g  &\alpha h\nabla^g\\
          h\alpha\dive^g& ch\ID +h^2\mathcal{K}
        \end{pmatrix}
    \]
    and note that $\mathcal{L}$ is linear and continuous.
    Moreover, since
    \[
        \langle\nabla^g p,u\rangle
        =
        -\langle\dive^g u,p\rangle,
    \]
    the coupling terms cancel, and therefore
    \[
    \begin{aligned}
        \left\langle
            \mathcal L
            \binom{u}{p},
            \binom{u}{p}
        \right\rangle
        &=
        h\langle\mathcal Eu,u\rangle
        +\delta\langle\mathcal E^gu,u\rangle
        +ch\|p\|_{L^2(\omg)}^2
        +h^2\langle\mathcal Kp,p\rangle.
    \end{aligned}
    \]
    Hence, \(\mathcal L\) is coercive.
    Since the corresponding right-hand side depends only on the already known quantities \(u^i\), \(p^{i-1}\), and on the data \(f^{i+1}\), \(g^i\), the Lax--Milgram lemma yields a unique solution \((u^{i+1},p^i)\).

    \textit{\underline{Step 2: Energy estimates.}}
    We first consider \(u^1\). Testing
    \cref{eq:rothe_discretization:mechanics} for \(i=0\) with
    \(u^1-u^0\) and using the polarization identity, we obtain
    \begin{multline*}
        \frac{h}{2}\Bigl(
            \langle\mathcal Eu^1,u^1\rangle
            -\langle\mathcal Eu^0,u^0\rangle
            +\langle\mathcal E(u^1-u^0),u^1-u^0\rangle
        \Bigr)
        +\delta
        \langle
            \mathcal E^g(u^1-u^0),u^1-u^0
        \rangle
        \\
        =
        h(f^1,u^1-u^0)_{L^2(\omf)}
        +h(f^1,u^1-u^0)_{L^2(\omg)}
        -h\alpha
        \langle\nabla^gp^0,u^1-u^0\rangle .
    \end{multline*}
    By coercivity, Korn's inequality, and Young's inequality,
    \begin{align*}
        h\left|
            (f^1,u^1-u^0)_{L^2(\omf)}
        \right|
        &\leq
        \frac{h}{8}
        \langle
            \mathcal E(u^1-u^0),u^1-u^0
        \rangle
        +Ch\|f^1\|_{L^2(\omf)}^2,
        \\
        h\left|
            (f^1,u^1-u^0)_{L^2(\omg)}
        \right|
        &\leq
        \frac{\delta}{4}
        \langle
            \mathcal E^g(u^1-u^0),u^1-u^0
        \rangle
        +C\frac{h^2}{\delta}\|f^1\|_{L^2(\omg)}^2,
        \\
        h\alpha\left|
            \langle\nabla^gp^0,u^1-u^0\rangle
        \right|
        &\leq
        \frac{\delta}{4}
        \langle
            \mathcal E^g(u^1-u^0),u^1-u^0
        \rangle
        +C\frac{h^2}{\delta}\|p^0\|_{L^2(\omg)}^2.
    \end{align*}
    After absorbing these terms and dividing by \(h\), we obtain
    \begin{equation}\label{eq:estimate_u1}
    \begin{aligned}
        \|u^1\|_{H^1(\Omega)}^2
        \leq C\Bigl(
            \|u_0\|_{H^1(\Omega)}^2
            +\|f^1\|_{L^2(\omf)}^2
            +\frac{h}{\delta}\|f^1\|_{L^2(\omg)}^2
            +\frac{h}{\delta}\|p_0\|_{L^2(\omg)}^2
        \Bigr).
    \end{aligned}
    \end{equation}
    Moreover, the preceding estimate also yields
    \[
        \|u^1-u^0\|_{H^1(\omg)}^2\leq C_\delta h.
    \]
    Consequently,
    \[
        u^1-u^0\longrightarrow0
        \quad\text{in }H^1(\omg)^3,
    \]
    and hence
    \[
        cp^0+\alpha\dive^g u^1
        \longrightarrow
        cp_0+\alpha\dive^g u_0
        \quad\text{in }H^1(\omg)^*
    \]
    for $h\to0$.

    For the estimates of $(u^{i+1},p^i)$ for $i\ge1$, we set
    \[
        \derh u^i:=h^{-1}(u^i-u^{i-1}),
        \qquad i=1,\ldots,N,
    \]
    and
    \[
        \derh p^i:=h^{-1}(p^i-p^{i-1}),
        \qquad i=1,\ldots,N-1.
    \]
    For \(i=1,\ldots,N-1\), we test \eqref{eq:rothe_discretization} with \(\derh u^{i+1}\) and \(h^{-1}p^i\).
    This yields
    \begin{align*}
        h\langle\mathcal Eu^{i+1},\derh u^{i+1}\rangle_{H^{-1}(\Omega)}
        &+\delta h
        \langle\mathcal E^g\derh u^{i+1},
        \derh u^{i+1}\rangle_{H^{-1}(\Omega)}
        \\
        &+h\alpha
        \langle\nabla^gp^i,\derh u^{i+1}\rangle_{H^{-1}(\Omega)}
        =
        h(f^{i+1},\derh u^{i+1})_{L^2(\Omega)},
        \\
        ch(\derh p^i,p^i)_{L^2(\omg)}
        &+h\alpha
        (\dive\derh u^{i+1},p^i)_{L^2(\omg)}
        \\
        &+h\langle\mathcal Kp^i,p^i\rangle_{H^1(\omg)^*}
        =
        h(g^i,p^i)_{L^2(\omg)}.
    \end{align*}
    Noting that
    \[
    \langle\nabla^gp^i,\derh u^{i+1}\rangle_{H^{-1}(\Omega)}
    =
    -(\dive\derh u^{i+1},p^i)_{L^2(\omg)},  
    \]
    and summing the weak forms, we arrive at
    \begin{multline*}
    ch(\derh p^i,p^i)_{L^2(\omg)}
    +h\langle\mathcal Eu^{i+1},\derh u^{i+1}\rangle_{H^{-1}(\Omega)}
    \\
    +\delta h
    \langle\mathcal E^g\derh u^{i+1},
    \derh u^{i+1}\rangle_{H^{-1}(\Omega)}
    +h\langle\mathcal Kp^i,p^i\rangle_{H^1(\omg)^*}
    \\
    =
    h(g^i,p^i)_{L^2(\omg)}
    +h(f^{i+1},\derh u^{i+1})_{L^2(\Omega)}.
    \end{multline*}
    Using the product rule for difference quotients (and its analogue for the pressure), 
    \begin{multline}\label{eq:product_rule_difference}
        2h\langle\mathcal Eu^{i+1},\derh u^{i+1}\rangle_{H^{-1}(\Omega)}\\
        =        \langle\mathcal Eu^{i+1},u^{i+1}\rangle_{H^{-1}(\Omega)}
        -\langle\mathcal Eu^i,u^i\rangle_{H^{-1}(\Omega)}
        +h^2
        \langle\mathcal E\derh u^{i+1},
        \derh u^{i+1}\rangle_{H^{-1}(\Omega)},
    \end{multline}
    we are led to
    \begin{multline}\label{eq:estimates_mechanics}
    c\left(
        \|p^i\|_{L^2(\omg)}^2
        -\|p^{i-1}\|_{L^2(\omg)}^2
        +h^2\|\derh p^i\|_{L^2(\omg)}^2
    \right)
    \\
    +\langle\mathcal Eu^{i+1},u^{i+1}\rangle_{H^{-1}(\Omega)}
    -\langle\mathcal Eu^i,u^i\rangle_{H^{-1}(\Omega)}
    +h^2
    \langle\mathcal E\derh u^{i+1},
    \derh u^{i+1}\rangle_{H^{-1}(\Omega)}
    \\
    +2\delta h
    \langle\mathcal E^g\derh u^{i+1},
    \derh u^{i+1}\rangle_{H^{-1}(\Omega)}
    +2h\langle\mathcal Kp^i,p^i\rangle_{H^1(\omg)^*}
    \\
    =
    2h(g^i,p^i)_{L^2(\omg)}
    +2h(f^{i+1},\derh u^{i+1})_{L^2(\Omega)}.
    \end{multline}
    Summing \cref{eq:estimates_mechanics} over \(i=1,\ldots,j\), where
    \(1\leq j\leq N-1\), and using the positivity assumptions on the
    stiffness tensors and the permeability, together with Korn's
    inequalities on \(\Omega\) and \(\omg\), see \cref{lem:korn}, we obtain
    \begin{multline*}
        \|p^j\|_{L^2(\omg)}^2
        -\|p^0\|_{L^2(\omg)}^2
        +\|\nabla u^{j+1}\|_{L^2(\Omega)}^2
        -\|\nabla u^1\|_{L^2(\Omega)}^2
        \\
        +h\sum_{i=1}^j
        \Bigl(
            h\|\derh p^i\|_{L^2(\omg)}^2
            +h\|\nabla\derh u^{i+1}\|_{L^2(\Omega)}^2
            +\delta\|\nabla\derh u^{i+1}\|_{L^2(\omg)}^2
            +\|\nabla p^i\|_{L^2(\omg)}^2
        \Bigr)
        \\
        \leq
        C\left(h\sum_{i=1}^j
            \left|(g^i,p^i)_{L^2(\omg)}\right|
            +
            h\left|\sum_{i=1}^j(f^{i+1},\derh u^{i+1})_{L^2(\Omega)}\right|
        \right).
    \end{multline*}
    For the term involving \(f\), we split the integral over the two
    subdomains and obtain
    \begin{multline*}
        h\sum_{i=1}^j
        (f^{i+1},\derh u^{i+1})_{L^2(\Omega)}=
        h\sum_{i=1}^j
        (f^{i+1},\derh u^{i+1})_{L^2(\omf)}
        +
        h\sum_{i=1}^j
        (f^{i+1},\derh u^{i+1})_{L^2(\omg)}\\
        =
        (f^{j+1},u^{j+1})_{L^2(\omf)}
        -(f^2,u^1)_{L^2(\omf)}
        -\sum_{i=1}^{j-1}
        (f^{i+2}-f^{i+1},u^{i+1})_{L^2(\omf)}
        +
        h\sum_{i=1}^j
        (f^{i+1},\derh u^{i+1})_{L^2(\omg)}
        \end{multline*}
        and estimate
    \[
        \left|
            (f^{i+2}-f^{i+1},u^{i+1})_{L^2(\omf)}
        \right|
        \leq
        \sqrt{h}\,
        \|\partial_t f\|_{L^2((t^{i+1},t^{i+2})\times\omf)}
        \|u^{i+1}\|_{L^2(\omf)}.
    \]
    Using the continuous embedding
    \[
        H^1((a,b);L^2(\omf))
        \hookrightarrow
        C([a,b];L^2(\omf)),
    \]
    we obtain
    \[
    \begin{aligned}
        \|f^{j+1}\|_{L^2(\omf)}^2
        +\|f^2\|_{L^2(\omf)}^2
        &\leq
        2\|f\|_{C([0,t^{j+1}];L^2(\omf))}^2
        \\
        &\leq C_T\|f\|_{H^1(S;L^2(\omf))}^2.
    \end{aligned}
    \]
    Consequently, the discrete integration-by-parts formula and Young's
    inequality yield
    \begin{multline*}
        h\left|\sum_{i=1}^j
            (f^{i+1},\derh u^{i+1})_{L^2(\Omega)}
        \right|\\
        \leq \gamma\left(\|u^{j+1}\|_{L^2(\omf)}^2
            +\|u^1\|_{L^2(\omf)}^2+h
        \sum_{i=1}^{j-1}
        \|u^{i+1}\|_{L^2(\omf)}^2
        +h
        \sum_{i=1}^j
        \|\derh u^{i+1}\|_{L^2(\omg)}^2\right)
        \\
        +C_\gamma\left(C_T\|f\|_{H^1(S;L^2(\omf))}^2+
        \sum_{i=1}^{j-1}
        \|\partial_t f\|_{L^2((t^{i+1},t^{i+2})\times\omf)}^2+h
        \sum_{i=1}^j
        \|f^{i+1}\|_{L^2(\omg)}^2\right).
    \end{multline*}
    For $\gamma$ sufficiently small, we get
    \begin{multline}\label{eq:estimates_with_raw_pressure}
    \|p^j\|_{L^2(\omg)}^2
    +
    \|\nabla u^{j+1}\|_{L^2(\Omega)}^2
    \\
    +h\sum_{i=1}^j
    \Bigl(
        h\|\derh p^i\|_{L^2(\omg)}^2
        +h\|\nabla\derh u^{i+1}\|_{L^2(\Omega)}^2
        +\delta\|\nabla\derh u^{i+1}\|_{L^2(\omg)}^2
        +\|\nabla p^i\|_{L^2(\omg)}^2
    \Bigr)
    \\
    \leq
    C\Bigl(
        \|g\|_{L^2((0,t^{j+1})\times\omg)}^2
        +
        \|f\|_{H^1(0,t^{j+1};L^2(\omf))}^2
        +
        \|f\|_{L^2((0,t^{j+1})\times\omg)}^2
        +
        \|p^0\|_{L^2(\omg)}^2
        +
        \|u^1\|_{H^1(\Omega)}^2
    \Bigr)
    \\
    +Ch\sum_{i=1}^j
    \left(
        \|p^i\|_{L^2(\omg)}^2
        +
        \|u^{i+1}\|_{L^2(\Omega)}^2
    \right),
    \end{multline}
    By the discrete Grönwall inequality, see \cref{lem:gronwall}, and the estimates for $u^1$ \eqref{eq:estimate_u1}, we obtain
    \begin{multline*}
        \|p^j\|_{L^2(\omg)}^2
        +
        \|u^{j+1}\|_{H^1(\Omega)}^2
        \\
        +h\sum_{i=1}^j
        \Bigl(
            \|\nabla p^i\|_{L^2(\omg)}^2
            +
            \delta\|e(\derh u^{i+1})\|_{L^2(\omg)}^2
            +h\|\derh p^i\|_{L^2(\omg)}^2
            +h\|\derh u^{i+1}\|_{H^1(\Omega)}^2
        \Bigr)
        \\
        \leq
        C\exp(Ct^{j+1})
        \Bigl(
            \|g\|_{L^2(S\times\omg)}^2
            +
            \|f\|_{H^1(S;L^2(\omf))}^2
            +
            \|f\|_{L^2(S\times\omg)}^2
            +
            \|p_0\|_{L^2(\omg)}^2
            +
            \|u_0\|_{H^1(\Omega)}^2
        \Bigr).
    \end{multline*}
\end{proof}

\begin{theorem}[Well-posedness for $\eta=0$]\label{thm:existence_eta0}
There are unique functions 
\[
u\in L^\infty(S;H_0^1(\Omega)^3)
      \cap H^1(S;H_0^1(\Omega)^3),\quad p\in L^\infty(S;L^2(\omg))
      \cap L^2(S;H^1(\omg))
      \cap H^1(S;H^1(\omg)^*)
\]
solving System~\eqref{system:abstract_operator_problem} in the sense of \cref{def:weak_solution}.
\end{theorem}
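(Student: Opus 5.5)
We pass to the limit $h\to0$ in the Rothe scheme $(P_h)$, using the uniform bounds of \cref{lemma:existence_eta0_discrete}.
We introduce the piecewise constant interpolants $\bar u_h(t)=u^{i+1}$ and $\bar p_h(t)=p^i$ on $(t^i,t^{i+1}]$. We also introduce the piecewise linear interpolants $\hat u_h$ (interpolating $u^i$ at $t^i$) and $\hat p_h$.
The lemma gives uniform bounds for $\bar u_h$ in $L^\infty(S;H_0^1(\Omega)^3)$ and for $\bar p_h$ in $L^\infty(S;L^2(\omg))\cap L^2(S;H^1(\omg))$. It also bounds $\partial_t\hat u_h$ in $L^2(S;H^1(\omg)^3)$, but this is only the gel-phase bound.

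The key additional step is an $H^1$-bound of $\partial_t\hat u_h$ on all of $\Omega$. For this we take the difference of \eqref{eq:rothe_discretization:mechanics} at consecutive steps and test with $\derh u^{i+1}$. Equivalently, we view the fibre part as an elliptic problem whose Dirichlet trace on $\Gamma_\e$ is given by the gel displacement. Then $\mathcal E^f \derh u^{i+1}$ is controlled by $\derh f^{i+1}$ in $\omf$ (using $\partial_tf^f\in L^2$) plus the interface data coming from $\derh u^{i+1}|_{\omg}$.

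Concretely, we write $\derh u^{i+1}=\mathcal E^g_\e$-extension of its gel part plus a remainder $z\in H^1_0(\omf)$. The remainder solves $\mathcal E^f z=\derh f^{i+1}-\mathcal E^f(\text{extension})$ in $H^{-1}(\omf)$. Lax--Milgram then gives
\[
h\sum_i\|\derh u^{i+1}\|_{H^1(\Omega)}^2\le C\Bigl(h\sum_i\|e(\derh u^{i+1})\|_{L^2(\omg)}^2+\|\partial_tf^f\|_{L^2(S\times\omf)}^2\Bigr).
\]
The index-shift at $i=1$ involves $f^1$ versus $f(0)$ and $u^1-u^0$. We handle it with the compatibility condition in (A2)(i) and the convergence $u^1-u^0\to0$ from the lemma.

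I expect this fibre-phase time regularity to be the main obstacle. The fibre phase has no viscous smoothing, so the bound rests entirely on the compatibility of $u_0$ with $f^f(0)$ and on $\partial_t f^f$.

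With these bounds, we extract subsequences with
\begin{itemize}
\item $\hat u_h\rightharpoonup u$ in $H^1(S;H_0^1(\Omega)^3)$,
\item $\bar u_h\weakstar u$ in $L^\infty(S;H^1_0)$, since $\|\bar u_h-\hat u_h\|_{L^2(S;H^1)}\le Ch$,
\item $\bar p_h\rightharpoonup p$ in $L^2(S;H^1(\omg))$ and weakly-$*$ in $L^\infty(S;L^2)$.
\end{itemize}
For the mass balance, we use the discrete fluid content $\zeta^i=cp^i+\alpha\dive^g u^{i+1}$. Equation \eqref{eq:rothe_discretization:visco_porous} reads $\derh\zeta^i+\mathcal K p^i=g^i$. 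This bounds the interpolant of $\zeta$ in $H^1(S;H^1(\omg)^*)$, so $\zeta=cp+\alpha\dive u\in H^1(S;H^1(\omg)^*)$.

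We pass to the limit in the mechanics equation, tested with $\psi\in L^2(S;H^1_0)$, by linearity. In the mass balance we use a discrete summation by parts against $\varphi\in H^1(S;H^1(\omg))$ with $\varphi(T)=0$. The boundary term involves $cp^0+\alpha\dive^g u^1$, which converges to $\zeta_{\e,0}$ in $H^1(\omg)^*$ by the lemma. This yields \eqref{eq:weak_visco_porous}.

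For the initial values, $u\in H^1(S;H^1_0)\subset C([0,T];H^1_0)$ gives $u(0)=\lim\hat u_h(0)=u_0$. The initial value $p(0)=p_0$ follows from the identity $cp=\zeta-\alpha\dive u$, where $\zeta\in C([0,T];H^1(\omg)^*)$. The Lions--Magenes lemma applied to $p\in L^2(H^1)\cap H^1((H^1)^*)$ gives $p\in C([0,T];L^2(\omg))$, and $\zeta(0)=\zeta_{\e,0}$ then gives $p(0)=p_0$.

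For uniqueness, we take the difference of two solutions, which has zero data. We test the mechanics equation with $\partial_t u$ and the mass balance with $p$, the latter justified by the $H^1((H^1)^*)$ regularity and Lions--Magenes. Adding the two, the coupling terms $\alpha\langle\nabla^g p,\partial_tu\rangle$ and $\alpha\langle\dive^g\partial_t u,p\rangle$ cancel. This leaves
\[
\tfrac12\tfrac{d}{dt}\bigl(\langle\mathcal Eu,u\rangle+c\|p\|^2\bigr)+\delta\langle\mathcal E^g\partial_tu,\partial_tu\rangle+\langle\mathcal Kp,p\rangle=0,
\]
so $u=0$ and $p=0$.
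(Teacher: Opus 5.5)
Your argument is correct, modulo two small repairs listed below. It shares the paper's skeleton: the Rothe limit, the staggered fluid content $cp^i+\alpha\dive^g u^{i+1}$ with $cp^0+\alpha\dive^g u^1\to\zeta_{\e,0}$, Lions--Magenes to recover $p(0)=p_0$, and energy uniqueness. The one genuine difference is where the fibre-phase time regularity comes from.

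The paper passes to the limit using only the gel bound on $\partial_t\widehat u_h$, which gives $u\in H^1(S;H^1(\omg)^3)$ and the limit equation. Only then does it use that $\mathcal E^f u=f^f$ in $H^{-1}(\omf)$, with Dirichlet data $u|_{\Gamma_\e}\in H^1(S;H^{1/2}(\Gamma_\e)^3)$, has a linear bounded solution operator that commutes with $\partial_t$. It then glues the two phases and uses the compatibility condition separately to get $u(0)=u_0$ in $\omf$. This route shows that the fibre regularity itself does not depend on compatibility, which only fixes the initial value.

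You instead difference $\mathcal E^f u^{i+1}=f^{i+1}$ at the discrete level and obtain an $h$-uniform bound on $\derh u^{i+1}$ in $H^1(\Omega)$. This gives $\widehat u_h\rightharpoonup u$ in $H^1(S;H_0^1(\Omega)^3)$ directly, so $u(0)=u_0$ on all of $\Omega$ follows from trace continuity, with no gluing step. It is also the discrete analogue of the fibre estimate the paper performs later in \cref{lem:apriori}. The price is that compatibility must enter at the first step, via $\mathcal E^f(u^1-u_0)=f^1-f^f(0)$ with $\|f^1-f^f(0)\|_{L^2(\omf)}\le\sqrt h\,\|\partial_t f^f\|_{L^2(0,h;L^2(\omf))}$. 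You also need the rate $\|u^1-u^0\|_{H^1(\omg)}^2\le C_\delta h$ from the proof of \cref{lemma:existence_eta0_discrete}, not merely $u^1-u^0\to0$, because $h\|\derh u^1\|_{H^1}^2=\|u^1-u^0\|_{H^1}^2/h$.

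\textbf{First repair.} Since $p^0=p_0$ lies only in $L^2(\omg)$, your $\bar p_h$ (equal to $p_0$ on $(0,t^1]$) is not bounded in $L^2(S;H^1(\omg))$. In the $\mathcal K$-term, use the shifted interpolant instead ($p^i$ on $(t^{i-1},t^i]$, the paper's $p_h^+$). It differs from $\bar p_h$ by $O(\sqrt h)$ in $L^2(S;L^2(\omg))$, thanks to the bound on $h\sum_i h\|\derh p^i\|_{L^2(\omg)}^2$ from the discrete energy estimate.

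\textbf{Second repair.} For your remainder $z$ to lie in $H_0^1(\omf)$, the extension of the gel part must vanish on $(\partial\Omega)_\e^f$, and the operator of \cref{lem:extension} does not guarantee this. At fixed $\e$ you need an extension, or a trace lifting, that preserves the zero trace on $\partial\Omega$. The paper's appeal to continuous dependence on the Dirichlet trace tacitly relies on the same fact.
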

\begin{proof}
We introduce the piecewise constant interpolants associated with the
discrete solutions from \cref{lemma:existence_eta0_discrete}:
\[
    u_h(t)
    :=
    \sum_{i=0}^{N-1}
    u^{i+1}\mathds{I}_{(t^i,t^{i+1}]}(t),
\]
and\footnote{Here, we introduce two pressure interpolants because \(p^0=p_0\) is only assumed to belong to \(L^2(\omg)\), whereas \(p^i\in H^1(\omg)\) for \(i\geq1\). Thus, \(p_h^-\) retains the correct initial value, while \(p_h^+\) is uniformly bounded in \(L^2(S;H^1(\omg))\).
Moreover,
\[
    \|p_h^+-p_h^-\|_{L^2(S;L^2(\omg))}\to0
    \qquad\text{as }h\to0.
\]}
\[
    p_h^-(t)
    :=
    \sum_{i=0}^{N-1}
    p^i\mathds{I}_{(t^i,t^{i+1}]}(t),
    \qquad
    p_h^+(t)
    :=
    \sum_{i=1}^{N-1}
    p^i\mathds{I}_{(t^{i-1},t^i]}(t)
    +
    p^{N-1}\mathds{I}_{(t^{N-1},T]}(t).
\]
Moreover, we define
\[
    \derh u_h(t)
    :=
    \sum_{i=0}^{N-1}
    \derh u^{i+1}
    \mathds{I}_{(t^i,t^{i+1}]}(t).
\]
The estimate for the initial step and the discrete energy estimate imply
\[
    \|u_h\|_{L^\infty(S;H_0^1(\Omega)^3)}
    +
    \|\derh u_h\|_{L^2(S;H^1(\omg)^3)}
    +
    \|p_h^-\|_{L^\infty(S;L^2(\omg))}
    +
    \|p_h^+\|_{L^2(S;H^1(\omg))}
    \leq C.
\]
where $C>0$ does not depend on $h$.

With the uniform estimates derived above, we may extract a subsequence,
not relabeled, and find functions
\[
    u\in L^\infty(S;H_0^1(\Omega)^3),
    \qquad
    p\in L^\infty(S;L^2(\omg))
       \cap L^2(S;H^1(\omg)),
\]
such that
\[
    u_h \weakstar u
    \quad\text{in }L^\infty(S;H_0^1(\Omega)^3),
\]
and
\[
    p_h^-\weakstar p
    \quad\text{in }L^\infty(S;L^2(\omg)),
    \qquad
    p_h^+\rightharpoonup p
    \quad\text{in }L^2(S;H^1(\omg)).
\]
To identify the discrete time derivative, we introduce the piecewise
affine interpolant
\[
    \widehat u_h(t)
    :=
    u^i+(t-t^i)\derh u^{i+1},
    \qquad t\in(t^i,t^{i+1}].
\]
so that
\[
    \partial_t\widehat u_h
    =
    \derh u^{i+1}
    \qquad\text{on }(t^i,t^{i+1}].
\]
The energy estimate implies
\[
    \partial_t\widehat u_h
    \rightharpoonup v
    \quad\text{in }L^2(S;H^1(\omg)^3)
\]
for some \(v\). Moreover,
\[
    \|u_h-\widehat u_h\|_{L^2(S;H^1(\omg))}
    \leq
    h\|\partial_t\widehat u_h\|_{L^2(S;H^1(\omg))}
    \longrightarrow0.
\]
Consequently, \(v=\partial_tu\) on \(\omg\), and hence
\[
    u\in H^1(S;H^1(\omg)^3).
\]
Since the affine interpolant satisfies
\[
    \widehat u_h(0)=u^0=u_0
\]
and
\[
    \widehat u_h
    \rightharpoonup u
    \qquad\text{in }H^1(S;H^1(\omg)^3),
\]
the continuity of the linear trace operator at \(t=0\) yields
\[
    u(0)=u_0
    \qquad\text{in }H^1(\omg)^3.
\]
After division by \(h\), the discrete mechanical equation can be
written on every interval \((t^i,t^{i+1}]\), \(i=1,\ldots,N-1\), as
\[
    \mathcal E u_h
    +\delta\mathcal E^g\partial_t\widehat u_h
    +\alpha\nabla^g p_h
    =
    f_h,
\]
where \(f_h(t)=f^{i+1}\) on \((t^i,t^{i+1}]\).
Since
\[
    f_h\longrightarrow f
    \quad\text{in }L^2(S;H^{-1}(\Omega)^3),
\]
we may test this identity with arbitrary
\(\psi\in L^2(S;H_0^1(\Omega)^3)\) and pass to the limit in each term.
Using the weak convergences above and the continuity of
\(\mathcal E\), \(\mathcal E^g\), and \(\nabla^g\), we obtain
\[
    \int_S
    \left\langle
        \mathcal Eu
        +\delta\mathcal E^g\partial_tu
        +\alpha\nabla^gp-f,
        \psi
    \right\rangle
    \,\di t
    =0.
\]
Therefore,
\[
    \mathcal Eu
    +\delta\mathcal E^g\partial_tu
    +\alpha\nabla^gp
    =
    f
    \quad\text{in }L^2(S;H^{-1}(\Omega)^3).
\]

    To improve the time regularity of \(u\) in the elastic part, we first
    restrict the mechanical equation to test functions supported in
    \(\omf\). Since the operators \(\mathcal E^g\) and \(\nabla^g\) vanish
    on such test functions, we obtain
    \[
        \mathcal E^f u=f
        \quad\text{in }L^2\bigl(S;H^{-1}(\omf)^3\bigr).
    \]
    Moreover, since
    \[
        u\in H^1\bigl(S;H^1(\omg)^3\bigr),
    \]
    the trace theorem yields
    \[
        u|_{\Gamma_\e}
        \in H^1\bigl(S;H^{1/2}(\Gamma_\e)^3\bigr),
        \qquad
        \partial_t(u|_{\Gamma_\e})
        =
        (\partial_tu)|_{\Gamma_\e}.
    \]
    Since, for fixed $\e>0$, the elliptic problem in $\Omega_\e^f$ depends continuously and linearly on the volume force and on the Dirichlet trace prescribed on $\Gamma_\e$, the regularity
    \[
        f^f\in H^1\bigl(S;L^2(\Omega_\e^f)^3\bigr),
        \qquad
        u|_{\Gamma_\e}\in
        H^1\bigl(S;H^{1/2}(\Gamma_\e)^3\bigr),
    \]
    implies
    \[
        u|_{\Omega_\e^f}
        \in H^1\bigl(S;H^1(\Omega_\e^f)^3\bigr).
    \]
    Since the traces of the time derivatives from $\Omega_\e^f$ and
    $\Omega_\e^g$ agree on $\Gamma_\e$, the two restrictions can be glued,
    and hence
    \[
        u\in H^1\bigl(S;H_0^1(\Omega)^3\bigr).
    \]
    The identity
    \[
        u(0)=u_0
        \quad\text{in }H^1(\omg)^3
    \]
    has already been obtained from the affine interpolants.
    By the quasistatic equilibrium in the fibre phase, the compatibility condition for $u_0$, and the continuity of the corresponding elliptic solution operator, the same identity holds in $\omf$.
    Hence,
    \[
        u(0)=u_0
        \quad\text{in }H_0^1(\Omega)^3.
    \]

    Finally, we pass to the limit in the discrete mass-balance equation.
    For this purpose, we set
    \[
        q^i:=cp^i+\alpha\dive^g u^{i+1},
        \qquad i=0,\ldots,N-1.
    \]
    Due to the staggered time discretisation, the initial discrete fluid content is given by
    \[
        q^0=cp_0+\alpha\dive^g u^1,
    \]
    and therefore does not coincide exactly with the prescribed initial fluid content
    \[
        \zeta_0=cp_0+\alpha\dive^g u_0.
    \]
    However, the estimate for the initial step yields
    \[
        u^1-u_0\longrightarrow0
        \quad\text{in }H^1(\omg)^3,
    \]
    and hence
    \[
        q^0\longrightarrow\zeta_0
        \quad\text{in }L^2(\omg)
    \]
    as $h\to0$.
    Then, after division by \(h^2\),
    \cref{eq:rothe_discretization:visco_porous} can be written as
    \[
        \frac{q^i-q^{i-1}}{h}
        +\mathcal Kp^i
        =
        g^i
        \quad\text{in }H^1(\omg)^*,
        \qquad i=1,\ldots,N-1.
    \]
    We take \(\varphi\in C_c^1([0,T);H^1(\omg))\) and test the discrete equation with suitable time averages \(\varphi^i\) of \(\varphi\).
    After summing over \(i\) and applying discrete
    summation by parts, we obtain
    \begin{align*}
        -\sum_{i=1}^{N-2}
        \left\langle
            q^i,\varphi^{i+1}-\varphi^i
        \right\rangle+
        h\sum_{i=1}^{N-1}
        \left\langle
            \mathcal Kp^i,\varphi^i
        \right\rangle=
        h\sum_{i=1}^{N-1}
        (g^i,\varphi^i)_{L^2(\omg)}
        +
        \left\langle q^0,\varphi^1\right\rangle .
    \end{align*}
    Using the convergences established above, the strong convergence of the piecewise constant approximation \(g_h\) to \(g\), and
    \[
        \frac{\varphi^{i+1}-\varphi^i}{h}
        \longrightarrow
        \partial_t\varphi
        \quad\text{uniformly in time},
    \]
    we may pass to the limit in the discrete weak formulation.
    \begin{align*}
        -\int_0^T
        \left\langle
            cp+\alpha\dive^g u,\partial_t\varphi
        \right\rangle
        \di{t}
        +
        \int_0^T
        \left\langle
            \mathcal Kp,\varphi
        \right\rangle
        \di{t}
        =
        \int_0^T
        (g,\varphi)_{L^2(\omg)}
        \di{t}
        +
        (
            \zeta_0,\varphi(0))_{L^2(\omg)} .
    \end{align*}
    The preceding identity is the weak formulation of
\[
    \partial_t\bigl(cp+\alpha\dive^g u\bigr)
    +\mathcal Kp
    =
    g
    \quad\text{in }L^2\bigl(S;H^1(\omg)^*\bigr),
\]
together with the initial condition
\[
    \bigl(cp+\alpha\dive^g u\bigr)(0)
    =
    \zeta_0
    \quad\text{in }H^1(\omg)^*.
\]
Since
\[
    \dive^g\partial_tu
    \in L^2\bigl(S;H^1(\omg)^*\bigr),
\]
it follows that
\[
    \partial_tp
    =
    \frac1c
    \left(
        g-\mathcal Kp-\alpha\dive^g\partial_tu
    \right)
    \in
    L^2\bigl(S;H^1(\omg)^*\bigr).
\]
Together with
\[
    p\in L^2\bigl(S;H^1(\omg)\bigr),
\]
the Lions--Magenes lemma yields
\[
    p\in C\bigl([0,T];L^2(\omg)\bigr).
\]
Moreover,
\[
    u\in C\bigl([0,T];H_0^1(\Omega)^3\bigr)
\]
and $u(0)=u_0$. Hence both sides of
\[
    \bigl(cp+\alpha\dive^g u\bigr)(0)=\zeta_0
\]
belong to $L^2(\omg)$, so the identity also holds in
$L^2(\omg)$. Since
\[
    \zeta_0=cp_0+\alpha\dive^g u_0,
\]
we conclude that
\[
    p(0)=p_0
    \quad\text{in }L^2(\omg).
\]

    Finally, uniqueness follows by applying the standard energy estimate to the difference of two solutions.
    Testing the mechanical equation with \(\partial_t u\) and the mass-balance equation with \(p\) (which can be justified by a standard time-regularisation argument), the coupling terms cancel, and coercivity implies that the difference vanishes.
    \end{proof}

\subsubsection{Case $(ii)$, $\eta=\delta\alpha$}\label{sssec:eta_alpha}

To exploit the special coupling structure of the case
\(\eta=\delta\alpha\), we introduce the auxiliary function
\[
    w:=u+\delta\partial_tu.
\]
Since \(\mathcal E=\mathcal E^f+\mathcal E^g\), the mechanical equation
can then be rewritten as
\[
    \mathcal Eu+\delta\mathcal E^g\partial_tu
    +\alpha\nabla^gp
    =
    \mathcal E^fu+\mathcal E^gw+\alpha\nabla^gp.
\]
Moreover, since \(\eta=\delta\alpha\),
\[
    \alpha\dive^g u+\eta\dive^g\partial_tu
    =
    \alpha\dive^g\bigl(u+\delta\partial_tu\bigr)
    =
    \alpha\dive^g w.
\]
Consequently, the original system is equivalently written as
\begin{subequations}\label{system:uvp}
\begin{align}
    \mathcal E^fu+\mathcal E^gw+\alpha\nabla^gp
    &=
    f
    &&\text{in }H^{-1}(\Omega)^3,
    \label{system:uvp:mechanics}
    \\
    \partial_t\bigl(cp+\alpha\dive^gw\bigr)
    +\mathcal Kp
    &=
    g
    &&\text{in }H^1(\omg)^*,
    \label{system:uvp:pressure}
    \\
    \delta\partial_tu+u
    &=
    w
    &&\text{in }H_0^1(\Omega)^3.
    \label{system:uvp:ode}
\end{align}
\end{subequations}
The system is supplemented with
\[
    u(0)=u_0,
    \qquad
    w(0)=w_0:=u_0+\delta v_0,
    \qquad
    p(0)=p_0.
\]

We again follow a Rothe type approach (using the same notation as in the previous section) and discretize this equivalent system in time. 
To that end, we initialise by
\[
    u^0=u_0,
    \qquad
    w^0=w_0:=u_0+\delta v_0,
    \qquad
    p^0=p_0.
\]
Then, for \(i=0,\ldots,N-1\), we seek
\[
    (u^{i+1},w^{i+1},p^{i+1})
    \in
    H_0^1(\Omega)^3
    \times H_0^1(\Omega)^3
    \times H^1(\omg)
\]
such that
\begin{align*}
    \mathcal E^fu^{i+1}
    +\mathcal E^gw^{i+1}
    +\alpha\nabla^gp^{i+1}
    &=
    f^{i+1},
    \\
    c\derh p^{i+1}
    +\alpha\dive^g\derh w^{i+1}
    +\mathcal Kp^{i+1}
    &=
    g^{i+1},
    \\
    \delta\derh u^{i+1}+u^{i+1}
    &=
    w^{i+1}.
\end{align*}
for given $(u^i,w^i,p^i)$.
The last equation is equivalent to
\[
    u^{i+1}
    =
    a_hw^{i+1}+b_hu^i,
    \qquad
    a_h:=\frac{h}{h+\delta},
    \qquad
    b_h:=\frac{\delta}{h+\delta}.
\]
Consequently, \((w^{i+1},p^{i+1})\) solves
\begin{subequations}\label{eq:rothe_discretization_eta}
\begin{empheq}[left=(P_{h,\eta})\empheqlbrace]{alignat=2}
    a_h\mathcal E^fw^{i+1}
    +\mathcal E^gw^{i+1}
    +\alpha\nabla^gp^{i+1}
    &=
    f^{i+1}-b_h\mathcal E^fu^i,&\quad&\text{in}\ H^{-1}(\Omega)^3
    \label{eq:discrete_vp_mechanics}
    \\
    cp^{i+1}
    +\alpha\dive^gw^{i+1}
    +h\mathcal Kp^{i+1}
    &=
    cp^i+\alpha\dive^gw^i+hg^{i+1}&\quad&\text{in}\ H^{1}(\omg)^*.
    \label{eq:discrete_vp_pressure}
\end{empheq}
\end{subequations}

\begin{lemma}[Well-posedness of the discrete problem]
\label{lem:discrete_wellposedness_eta_alpha}
For every \(h>0\) and every
\[
    (u^i,w^i,p^i)
    \in
    H_0^1(\Omega)^3
    \times H_0^1(\Omega)^3
    \times H^1(\omg),
\]
there exists a unique solution
\[
    (u^{i+1},w^{i+1},p^{i+1})
    \in
    H_0^1(\Omega)^3
    \times H_0^1(\Omega)^3
    \times H^1(\omg)
\]
of the discrete system \((P_{h,\eta})\).
\end{lemma}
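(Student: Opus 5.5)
The discrete system \((P_{h,\eta})\) is a linear saddle-point-type problem for \((w^{i+1},p^{i+1})\), with data given by \((u^i,w^i,p^i)\), \(f^{i+1}\) and \(g^{i+1}\). We solve it by the Lax--Milgram lemma, exactly as in Step~1 of the proof of \cref{lemma:existence_eta0_discrete}, and then recover \(u^{i+1}\) explicitly from \(u^{i+1}=a_hw^{i+1}+b_hu^i\).

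Since the coupling operators \(\alpha\nabla^g\) and \(\alpha\dive^g\) enter the two equations with opposite-sign adjoints, we first multiply \eqref{eq:discrete_vp_pressure} by a factor chosen so that the mixed terms cancel. We define \(\mathcal L_h\colon H_0^1(\Omega)^3\times H^1(\omg)\to H^{-1}(\Omega)^3\times H^1(\omg)^*\) by
\[
\mathcal L_h=\begin{pmatrix}
a_h\mathcal E^f+\mathcal E^g & \alpha\nabla^g\\
\alpha\dive^g & c\,\ID+h\mathcal K
\end{pmatrix}.
\]
The identity \(\langle\nabla^g p,w\rangle=-\langle\dive^g w,p\rangle\) then gives
\[
\Bigl\langle \mathcal L_h\tbinom{w}{p},\tbinom{w}{p}\Bigr\rangle
= a_h\langle\mathcal E^fw,w\rangle+\langle\mathcal E^gw,w\rangle+c\|p\|_{L^2(\omg)}^2+h\langle\mathcal Kp,p\rangle .
\]
The operator \(\mathcal L_h\) is linear and bounded, since \(\mathcal E^f,\mathcal E^g,\nabla^g,\dive^g,\mathcal K\) are.

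For coercivity, note first that \(a_h\in(0,1)\) is fixed for fixed \(h>0\). Combining (A1) with \(a_h\le 1\) gives
\[
a_h\langle\mathcal E^fw,w\rangle+\langle\mathcal E^gw,w\rangle\ge a_h\,c_0\|e(w)\|_{L^2(\Omega)}^2\ge a_h c_0 C^{-2}\|w\|_{H^1(\Omega)}^2,
\]
where the second inequality is the Korn inequality on \(\Omega\) (\cref{lem:korn}). For the pressure, positivity of \(c\) and of \(K\) gives
\[
c\|p\|_{L^2}^2+h\langle\mathcal Kp,p\rangle\ge \min(c,hc_K)\|p\|_{H^1(\omg)}^2 .
\]
Hence \(\mathcal L_h\) is coercive on \(H_0^1(\Omega)^3\times H^1(\omg)\) with a constant depending on \(h\), which is harmless here since uniqueness and existence are claimed only for fixed \(h\).

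It remains to check the right-hand side. The functional \((f^{i+1}-b_h\mathcal E^fu^i,\;cp^i+\alpha\dive^gw^i+hg^{i+1})\) lies in \(H^{-1}(\Omega)^3\times H^1(\omg)^*\), because \(f^{i+1}\in L^2\), \(u^i,w^i\in H_0^1\), \(p^i\in H^1(\omg)\) and \(g^{i+1}\in L^2(\omg)\). The Lax--Milgram lemma therefore yields a unique pair \((w^{i+1},p^{i+1})\). We then set \(u^{i+1}:=a_hw^{i+1}+b_hu^i\in H_0^1(\Omega)^3\).

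Finally we check equivalence with the original three-equation scheme. The third equation is equivalent to this formula for \(u^{i+1}\). Substituting it into the first equation gives \eqref{eq:discrete_vp_mechanics}, and multiplying the second equation by \(h\) gives \eqref{eq:discrete_vp_pressure}. This yields both existence and uniqueness of the triple. The only point requiring care is the coercivity bound for the \(w\)-block: it must use Korn on all of \(\Omega\) with the weighted tensor \(a_h\mathcal C^f\chi^f+\mathcal C^g\chi^g\), not a phase-wise estimate. This is the reason the reduction to \(w\) is done with the \(a_h\) weighting.
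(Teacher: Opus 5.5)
Your proposal is correct and is essentially the paper's proof: Lax--Milgram for $(w^{i+1},p^{i+1})$ on $H_0^1(\Omega)^3\times H^1(\omg)$, with the coupling terms cancelling via $\langle\nabla^g p,w\rangle=-\langle\dive^g w,p\rangle$, coercivity from $a_h>0$, Korn's inequality, $c>0$ and $h>0$, and then $u^{i+1}:=a_hw^{i+1}+b_hu^i$. Two small asides, neither affecting correctness: no rescaling of the pressure equation is needed, since the system as written already gives the cancellation; and your closing claim that a phase-wise estimate would not suffice is inaccurate---because $w$ vanishes on $\partial\Omega$, the Korn inequalities on $\omf$ and $\omg$ from \cref{lem:korn} work equally well, and the $a_h$ weighting comes from eliminating $u^{i+1}$, not from a coercivity consideration.
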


\begin{proof}
We consider the product space
\[
    X:=H_0^1(\Omega)^3\times H^1(\omg)
\]
and introduce the bilinear form \(B_h\colon X\times X\to\R\)
\[
\begin{aligned}
B_h((w,p),(\varphi,\psi))
:={}&
a_h\langle\mathcal E^fw,\varphi\rangle
+\langle\mathcal E^gw,\varphi\rangle
+\alpha\langle\nabla^gp,\varphi\rangle
\\
&+
c(p,\psi)_{L^2(\omg)}
+\alpha(\dive^gw,\psi)_{L^2(\omg)}
+h\langle\mathcal Kp,\psi\rangle .
\end{aligned}
\]
The corresponding linear functional \(L_h\colon X\to\R\) is given by
\[
L_h(\varphi,\psi)
:=
\left\langle
    f^{i+1}-b_h\mathcal E^fu^i,
    \varphi
\right\rangle+
\left(
    cp^i+\alpha\dive^gw^i+hg^{i+1},
    \psi
\right)_{L^2(\omg)}.
\]
Both \(B_h\) and \(L_h\) are continuous.
Moreover, by the adjoint relation between \(\nabla^g\) and \(\dive^g\),
the coupling terms cancel, and hence
\[
B_h((w,p),(w,p))
=
a_h\langle\mathcal E^fw,w\rangle
+\langle\mathcal E^gw,w\rangle+
c\|p\|_{L^2(\omg)}^2
+h\langle\mathcal Kp,p\rangle .
\]
Since \(a_h>0\), the coercivity assumptions on the elasticity tensors
and Korn's inequality from \cref{lem:korn} imply
\[
    a_h\langle\mathcal E^fw,w\rangle
    +
    \langle\mathcal E^gw,w\rangle
    \geq
    C_h\|w\|_{H^1(\Omega)}^2
\]
for some \(C_h>0\). Moreover, since \(c>0\),
\[
    c\|p\|_{L^2(\omg)}^2
    +
    h\langle\mathcal Kp,p\rangle
    \geq
    C_h\|p\|_{H^1(\omg)}^2.
\]
Thus, \(B_h\) is coercive on \(X\) for every fixed \(h>0\).
The Lax--Milgram lemma therefore yields a unique pair
\[
    (w^{i+1},p^{i+1})\in X
\]
solving \((P_h)\). Finally, we define
\[
    u^{i+1}:=a_hw^{i+1}+b_hu^i.
\]
Since \(u^i,w^{i+1}\in H_0^1(\Omega)^3\), we have
\(u^{i+1}\in H_0^1(\Omega)^3\). This value is uniquely determined and
satisfies
\[
    \delta\derh u^{i+1}+u^{i+1}=w^{i+1}.
\]
Hence, the discrete solution is unique.
\end{proof}

\begin{lemma}[Preliminary uniform estimates]
\label{lem:preliminary_estimate_eta_alpha}
Let \((u^i,w^i,p^i)\) be the solution of the discrete problem and set
\[
    q^i:=cp^i+\alpha\dive^g w^i.
\]
Then there exists a constant \(C>0\), independent of \(h\), such that
\[
    \max_{0\leq i\leq N}
    \left(
        \|u^i\|_{H^1(\omf)}^2
        +
        \|q^i\|_{H^1(\omg)^*}^2
    \right)
    +h\sum_{i=0}^{N-1}
\left(
    \|w^{i+1}\|_{H^1(\omg)}^2
    +
    \|p^{i+1}\|_{L^2(\omg)}^2
\right)
    \leq C.
\]
\end{lemma}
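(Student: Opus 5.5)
The plan is to avoid testing the mechanical equation with a discrete time derivative of $w$. That choice would produce the awkward cross term $\langle\mathcal E^f u^{i+1},\derh w^{i+1}\rangle$ with $\derh w=\derh u+\delta(\derh)^2u$. Instead I will work with a time-integrated form of the mass balance and test at the level of $(w,p)$ itself, so that the coupling terms cancel as in \cref{lem:discrete_wellposedness_eta_alpha}.

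\emph{Time-integrated mass balance.} Set $P^j:=h\sum_{k=1}^j p^k$ and $G^j:=h\sum_{k=1}^j g^k$. Summing \eqref{eq:discrete_vp_pressure} from $k=0$ to $j-1$ gives
\[
q^j+\mathcal K P^j=q^0+G^j\quad\text{in }H^1(\omg)^*,
\qquad
q^0=cp_0+\alpha\dive^g w_0\in L^2(\omg).
\]
\emph{Testing.} Test this identity with $p^j$, and test \eqref{system:uvp:mechanics} at level $j$ with $w^j$. Adding the two relations makes the coupling terms $\alpha\langle\nabla^g p^j,w^j\rangle$ and $\alpha(\dive^g w^j,p^j)$ cancel. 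What remains is
\[
c\|p^j\|^2+\langle\mathcal E^g w^j,w^j\rangle+\langle\mathcal E^f u^j,w^j\rangle+\langle\mathcal K P^j,p^j\rangle=(q^0+G^j,p^j)+(f^j,w^j).
\]
Two terms on the left telescope after using the discrete ODE $w^j=u^j+\delta\derh u^j$ and the polarization identity \eqref{eq:product_rule_difference}. First,
\[
\langle\mathcal E^f u^j,w^j\rangle=\langle\mathcal E^f u^j,u^j\rangle+\tfrac{\delta}{2h}\bigl(\langle\mathcal E^fu^j,u^j\rangle-\langle\mathcal E^fu^{j-1},u^{j-1}\rangle+h^2\langle\mathcal E^f\derh u^j,\derh u^j\rangle\bigr).
\]
Second, since $p^j=(P^j-P^{j-1})/h$,
\[
\langle\mathcal KP^j,p^j\rangle=\tfrac1{2h}\bigl(\langle\mathcal KP^j,P^j\rangle-\langle\mathcal KP^{j-1},P^{j-1}\rangle+\text{nonneg.}\bigr).
\]
Multiply by $h$ and sum over $j=1,\dots,m$. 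The left side then controls four quantities:
\begin{itemize}
\item $\frac\delta2\langle\mathcal E^f u^m,u^m\rangle$, hence $\|u^m\|_{H^1(\omf)}^2$ by Korn (\cref{lem:korn}; $u^m$ vanishes on $(\partial\Omega)^f_\e$);
\item $\frac12\|\nabla P^m\|^2$;
\item $h\sum c\|p^j\|^2$;
\item $h\sum\langle\mathcal E^g w^j,w^j\rangle$, hence $h\sum\|w^j\|_{H^1(\omg)}^2$, again by \cref{lem:korn}.
\end{itemize}
The initial contribution is $\frac\delta2\langle\mathcal E^fu_0,u_0\rangle$, which is bounded by (A2).

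\emph{Right-hand side.} The term $h\sum(q^0+G^j,p^j)$ is absorbed by Young's inequality into $h\sum c\|p^j\|^2$, since $\|G^j\|_{L^2}\le C\|g\|_{L^2(S\times\omg)}$. For the force term I split $(f^j,w^j)$ over the two phases. On $\omg$, Young's inequality absorbs it into the $\mathcal E^g$ term. On $\omf$, $w^j$ is not controlled, so I write $w^j=u^j+\delta\derh u^j$. The $u^j$ part is handled by Young's inequality. The $\delta\derh u^j$ part is handled by discrete summation by parts exactly as in the proof of \cref{lemma:existence_eta0_discrete}. This produces a boundary term $(f^m,u^m)_{\omf}$ and a sum involving $(f^{j+1}-f^j,u^j)_{\omf}$, both bounded via $f^f\in H^1(S;L^2)$ and $H^1\hookrightarrow C$ in time. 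The resulting $\gamma\|u^m\|^2_{H^1(\omf)}$ is absorbed for small $\gamma$. The terms $h\sum\|u^j\|^2_{H^1(\omf)}$ are removed by the discrete Grönwall lemma \cref{lem:gronwall}.

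\emph{Dual bound on $q$.} With the $P$-bound in hand, the integrated identity gives
\[
\|q^j\|_{H^1(\omg)^*}\le\|q^0\|+\|G^j\|+C\|\nabla P^j\|_{L^2(\omg)},
\]
which is uniform in $j$ and $h$.

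\emph{Main obstacle.} I expect the main difficulty to be the fibre-part force term. There $w$ carries a discrete time derivative of $u$ that is not controlled on $\omf$, so the summation-by-parts bookkeeping, including the index shift and the $j=1$ boundary terms at the initial data, must be done carefully so that everything closes with Grönwall.
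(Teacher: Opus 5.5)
Your argument is correct, and it reaches the estimate by a genuinely different route from the paper. On the mechanical side you do what the paper does: test with $w$ itself, telescope $\langle\mathcal E^f u,w\rangle$ via $w=u+\delta\derh u$, and use summation by parts on the fibre force. The difference is entirely in the mass balance.

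\emph{The paper's route.} The paper splits $q^i$ into two pieces. Its mean value is controlled by the scalar recursion $\overline q^{i+1}-\overline q^i=h\overline g^{i+1}$. Its mean-free part is tested with $\mathcal K^{-1}q^{i+1,\circ}$, which requires $\mathcal K$ to be invertible on $H^1_\diamond(\omg)$ (Poincar\'e--Wirtinger). Because the coupling then cancels only for the mean-free part, a residual $|\omg|\,\overline p^{i+1}\overline q^{i+1}$ remains and must be estimated with the bound on $\overline q^i$. The $g$-terms produce $\gamma h\|q^{i+1,\circ}\|^2_{\mathcal K^{-1}}$, which is why Gr\"onwall is needed there.

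\emph{Your route.} You sum the mass balance in time and test $q^j+\mathcal KP^j=q^0+G^j$ with $p^j$. This is the discrete analogue of the time-integrated weak formulation.
\begin{itemize}
\item The coupling cancels exactly.
\item The mean-value bookkeeping is built in: testing the integrated identity with $1$ gives precisely the paper's mean recursion.
\item Both $q^0$ and $G^j$ are absorbed into $hc\sum_j\|p^j\|^2$ by Young's inequality.
\end{itemize}

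Note also that $\langle\mathcal E^f u^j,w^j\rangle$ yields the dissipative term $h\sum_j\|u^j\|^2_{\mathcal E^f}$ on the left, which you did not list among the controlled quantities. The fibre force terms (the $u^j$-part and the summation-by-parts sum) can be absorbed directly into it, so Gr\"onwall is actually dispensable in your version.

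\emph{Trade-offs.} You use $q^0=\zeta_{\e,0}\in L^2(\omg)$ uniformly in $\e$, which holds under (A2)--(A3); the paper's energy only involves $q^{0,\circ}$ in the $\mathcal K^{-1}$-norm. The paper's energy also controls the increments $\sum_i\|q^{i+1,\circ}-q^{i,\circ}\|^2_{\mathcal K^{-1}}$. Yours instead gives $\max_j\|\nabla P^j\|_{L^2(\omg)}$ and recovers the dual bound on $q^j$ afterwards from the integrated identity. Both are sufficient for the statement as posed.
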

\begin{proof}
    We start by introducing spaces
\[
    H_\diamond^1(\omg)
    :=
    \left\{
        \psi\in H^1(\omg):
        \int_{\omg}\psi\,\di x=0
    \right\},\quad
    L_\diamond^2(\omg)
    :=
    \left\{
        \psi\in L^2(\omg):
        \int_{\omg}\psi\,\di x=0
    \right\}
\]
and note the decompositions
\[
H^1(\omg)=H_\diamond^1(\omg)\oplus\R,\quad L^2(\omg)=L^2_\diamond(\omg)\oplus\R
\]
where \(\R\) is identified with the space of constant functions.
For \(z\in L^2(\omg)\), we set
\[
    \overline z
    :=
    \fint_{\omg}z\,\di x,
    \qquad
    z^\circ:=z-\overline z.
\]
Thus
\[
    z=z^\circ+\overline z,
    \qquad
    z^\circ\in L_\diamond^2(\omg),
    \qquad
    \overline z\in\R.
\]
We introduce the discrete fluid content variable
\[
q^i=cp^i+\alpha\dive^gw^i
\]
and decompose
\[
    q^i=q^{i,\circ}+\overline q^i,
    \qquad
    p^i=p^{i,\circ}+\overline p^i,
\]
The discrete mass balance
\[
    q^{i+1}-q^i+h\mathcal Kp^{i+1}
    =
    hg^{i+1}
    \quad\text{in }H^1(\omg)^*
\]
is equivalent to the system
\begin{alignat}{2}
    \overline q^{i+1}-\overline q^i
    &=
    h\overline g^{i+1}\quad &&\text{in }\R
    \label{eq:mean_q}
    \\
    q^{i+1,\circ}-q^{i,\circ}
    +
    h\mathcal K p^{i+1,\circ}
    &=
    hg^{i+1,\circ}
    \quad&&\text{in }H_\diamond^1(\omg)^*.
    \label{eq:mean_free_q}
\end{alignat}
Summing \eqref{eq:mean_q} over $i=0,\dots,j$ and applying Cauchy-Schwarz, we find that
\[
    \max_{0\leq i\leq N}|\overline q^i|
    \leq|\overline q^0|+\left(\frac{T}{|\omg|}\right)^{\frac12}\|g\|_{L^2(S\times\omg)}\le C.
\]
We test \eqref{eq:discrete_vp_mechanics} with
\(h w^{i+1}\) and \eqref{eq:mean_free_q}
with \(\mathcal{K}^{-1}q^{i+1,\circ}\) where we note that $\mathcal{K}$ is an isomorphism from $H_\diamond^1(\omg)$ onto its dual (by the positivity of \(K\) and the Poincaré--Wirtinger inequality):
\begin{align}
    h\langle\mathcal E^fu^{i+1},w^{i+1}\rangle
    +
    h\langle\mathcal E^gw^{i+1},w^{i+1}\rangle
    -
    h\alpha
    (p^{i+1},\dive^gw^{i+1})_{L^2(\omg)}
    &=
    h(f^{i+1},w^{i+1})_{L^2(\Omega)},
    \label{eq:mechanical_energy_test}\\
    \left\langle
        q^{i+1,\circ}-q^{i,\circ},
        \mathcal K^{-1}q^{i+1,\circ}
    \right\rangle
    +
    h(p^{i+1,\circ},q^{i+1,\circ})_{L^2(\omg)}
    &=
    h\left\langle
        g^{i+1,\circ},
        \mathcal K^{-1}q^{i+1,\circ}
    \right\rangle.
    \label{eq:q_energy_test}
\end{align}
Here, we have used the reconstruction 
\[
    u^{i+1}=a_hw^{i+1}+b_hu^i.
\]
Since
\[
    q^{i+1,\circ}
    =
    cp^{i+1,\circ}
    +
    \alpha
    \bigl(\dive^gw^{i+1}\bigr)^\circ,
\]
we have
\[
    (p^{i+1,\circ},q^{i+1,\circ})_{L^2(\omg)}
    =
    c\|p^{i+1,\circ}\|_{L^2(\omg)}^2
    +
    \alpha
    (p^{i+1,\circ},\dive^gw^{i+1})_{L^2(\omg)}
\]
where we used that \(p^{i+1,\circ}\) has zero mean in the last term.
Hence, the mean-free part of the coupling term cancels when
\eqref{eq:mechanical_energy_test} and \eqref{eq:q_energy_test}
are added.
For the remaining mean-value contribution, we use
\[
    \overline q^{i+1}
    =
    c\overline p^{i+1}
    +
    \alpha
    \overline{\dive^gw^{i+1}},
\]
which implies
\[
\begin{aligned}
    -\alpha|\omg|\,
    \overline p^{i+1}
    \overline{\dive^gw^{i+1}}
    &=
    c|\omg|\,|\overline p^{i+1}|^2
    -
    |\omg|\,
    \overline p^{i+1}\overline q^{i+1}.
\end{aligned}
\]

We define
\[
    \|v\|_{\mathcal E^f}^2
    :=
    \langle\mathcal E^fv,v\rangle,
    \qquad
    \|v\|_{\mathcal E^g}^2
    :=
    \langle\mathcal E^gv,v\rangle,
\]
and, for \(r\in H_\diamond^1(\omg)^*\),
\[
    \|r\|_{\mathcal K^{-1}}^2
    :=
    \langle r,\mathcal K^{-1}r\rangle.
\]
Using
\[
    w^{i+1}
    =
    u^{i+1}
    +
    \delta\derh u^{i+1},
\]
we obtain
\begin{align}
    h\langle\mathcal E^fu^{i+1},w^{i+1}\rangle
    &=
    h\|u^{i+1}\|_{\mathcal E^f}^2
    +
    \frac{\delta}{2}
    \Bigl(
        \|u^{i+1}\|_{\mathcal E^f}^2
        -
        \|u^i\|_{\mathcal E^f}^2
        +
        \|u^{i+1}-u^i\|_{\mathcal E^f}^2
    \Bigr).
    \label{eq:fibre_energy_identity}
\end{align}
Moreover,
\begin{align}
    \left\langle
        q^{i+1,\circ}-q^{i,\circ},
        \mathcal K^{-1}q^{i+1,\circ}
    \right\rangle
    =
    \frac12
    \Bigl(
        \|q^{i+1,\circ}\|_{\mathcal K^{-1}}^2
        -
        \|q^{i,\circ}\|_{\mathcal K^{-1}}^2
        +
        \|q^{i+1,\circ}-q^{i,\circ}\|_{\mathcal K^{-1}}^2
    \Bigr).
    \label{eq:q_energy_identity}
\end{align}
Combining
\eqref{eq:mechanical_energy_test}--\eqref{eq:q_energy_identity},
we arrive at
\begin{multline}
\frac12
\Bigl(
    \|q^{i+1,\circ}\|_{\mathcal K^{-1}}^2
    -
    \|q^{i,\circ}\|_{\mathcal K^{-1}}^2
    +
    \|q^{i+1,\circ}-q^{i,\circ}\|_{\mathcal K^{-1}}^2
\Bigr)\\
+
\frac{\delta}{2}
\Bigl(
    \|u^{i+1}\|_{\mathcal E^f}^2
    -
    \|u^i\|_{\mathcal E^f}^2
    +
    \|u^{i+1}-u^i\|_{\mathcal E^f}^2
\Bigr)
+
h\|u^{i+1}\|_{\mathcal E^f}^2
+
h\|w^{i+1}\|_{\mathcal E^g}^2
+
hc\|p^{i+1,\circ}\|_{L^2(\omg)}^2
\\
+hc|\omg|\,|\overline p^{i+1}|^2
=
h(f^{i+1},w^{i+1})_{L^2(\Omega)}
+
h\left\langle
    g^{i+1,\circ},
    \mathcal K^{-1}q^{i+1,\circ}
\right\rangle
+
h|\omg|\,
\overline p^{i+1}\overline q^{i+1}.
\label{eq:discrete_q_energy}
\end{multline}
We now estimate the terms on the right-hand side. First,
\[
\begin{aligned}
    h|\omg|
    \left|
        \overline p^{i+1}\overline q^{i+1}
    \right|
    &\leq
    \frac{hc|\omg|}{2}
    |\overline p^{i+1}|^2
    +
    \frac{h|\omg|}{2c}
    |\overline q^{i+1}|^2.
\end{aligned}
\]
The second term is uniformly summable since
\[
    \max_{0\leq i\leq N}|\overline q^i|\leq C.
\]
Furthermore, for every \(\gamma>0\),
\[
    h\left|
        \left\langle
            g^{i+1,\circ},
            \mathcal K^{-1}q^{i+1,\circ}
        \right\rangle
    \right|
    \leq
    \gamma h
    \|q^{i+1,\circ}\|_{\mathcal K^{-1}}^2
    +
    C_\gamma h
    \|g^{i+1}\|_{L^2(\omg)}^2.
\]
To treat the force term, we use
\[
    w^{i+1}
    =
    u^{i+1}
    +
    \delta\derh u^{i+1}
\]
and decompose
\begin{multline}
    h(f^{i+1},w^{i+1})_{L^2(\Omega)}\\
    =
    h(f^{i+1},w^{i+1})_{L^2(\omg)}
    +
    h(f^{i+1},u^{i+1})_{L^2(\omf)}
    +
    \delta h
    (f^{i+1},\derh u^{i+1})_{L^2(\omf)}.
    \label{eq:force_decomposition}
\end{multline}
The first two terms satisfy
\[
\begin{aligned}
    h\left|
        (f^{i+1},w^{i+1})_{L^2(\omg)}
    \right|
    &\leq
    \gamma h
    \|w^{i+1}\|_{\mathcal E^g}^2
    +
    C_\gamma h
    \|f^{i+1}\|_{L^2(\omg)}^2,
    \\
    h\left|
        (f^{i+1},u^{i+1})_{L^2(\omf)}
    \right|
    &\leq
    \gamma h
    \|u^{i+1}\|_{\mathcal E^f}^2
    +
    C_\gamma h
    \|f^{i+1}\|_{L^2(\omf)}^2.
\end{aligned}
\]
The last term in \eqref{eq:force_decomposition} is treated after
summation in time. Discrete summation by parts yields
\begin{multline}
\delta h\sum_{i=0}^j
(f^{i+1},\derh u^{i+1})_{L^2(\omf)}\\
=
\delta
(f^{j+1},u^{j+1})_{L^2(\omf)}
-
\delta
(f^{1},u^{0})_{L^2(\omf)}
-
\delta
\sum_{i=1}^j
(f^{i+1}-f^{i},u^{i})_{L^2(\omf)}.
\label{eq:force_summation_by_parts}
\end{multline}
For the terminal term, Young's inequality gives
\[
\begin{aligned}
    \delta
    \left|
        (f^{j+1},u^{j+1})_{L^2(\omf)}
    \right|
    &\leq
    \frac{\delta}{4}
    \|u^{j+1}\|_{\mathcal E^f}^2
    +
    C\delta
    \|f^{j+1}\|_{L^2(\omf)}^2.
\end{aligned}
\]
Moreover, writing
\[
    f^{i+1}-f^{i}
    =
    h\derh f^{i+1},
\]
we obtain, for every \(\gamma>0\),
\[\delta
\sum_{i=1}^j
\left|
    (f^{i+1}-f^{i},u^{i})_{L^2(\omf)}
\right|
\leq
\gamma h\sum_{i=1}^j
\|u^i\|_{\mathcal E^f}^2
+
C_{\gamma,\delta}
h\sum_{i=1}^j
\|\derh f^{i+1}\|_{L^2(\omf)}^2.
\]
Summing \eqref{eq:discrete_q_energy} over
\(i=0,\ldots,j\), choosing \(\gamma>0\) sufficiently small, and
using the coercivity assumptions and Korn's inequality, we obtain
\begin{multline}
\|u^{j+1}\|_{\mathcal E^f}^2
+
\|q^{j+1,\circ}\|_{\mathcal K^{-1}}^2\\
+
h\sum_{i=0}^j
\left(
    \|u^{i+1}\|_{\mathcal E^f}^2
    +
    \|w^{i+1}\|_{\mathcal E^g}^2
    +
    \|p^{i+1}\|_{L^2(\omg)}^2
\right)
+
\sum_{i=0}^j
\left(
    \|u^{i+1}-u^i\|_{\mathcal E^f}^2
    +
    \|q^{i+1,\circ}-q^{i,\circ}\|_{\mathcal K^{-1}}^2
\right)\\
\leq
C
+
Ch\sum_{i=0}^j
\left(
    \|u^i\|_{\mathcal E^f}^2
    +
    \|q^{i,\circ}\|_{\mathcal K^{-1}}^2
\right),
\label{eq:preliminary_energy_estimate}
\end{multline}
where \(C\) depends on the data.
The discrete Grönwall inequality therefore yields
\begin{multline}
\max_{0\leq i\leq N}
\left(
    \|u^i\|_{\mathcal E^f}^2
    +
    \|q^{i,\circ}\|_{\mathcal K^{-1}}^2
\right)
+
h\sum_{i=0}^{N-1}
\left(
    \|u^{i+1}\|_{\mathcal E^f}^2
    +
    \|w^{i+1}\|_{\mathcal E^g}^2
    +
    \|p^{i+1}\|_{L^2(\omg)}^2
\right)\\
+
\sum_{i=0}^{N-1}
\left(
    \|u^{i+1}-u^i\|_{\mathcal E^f}^2
    +
    \|q^{i+1,\circ}-q^{i,\circ}\|_{\mathcal K^{-1}}^2
\right)
\leq C.
\label{eq:preliminary_uniform_estimate}
\end{multline}
Estimate~\eqref{eq:preliminary_uniform_estimate}, Korn's inequality,
and the uniform bound for the mean value \(\overline q^i\) imply
\[
    \max_{0\leq i\leq N}
    \left(
        \|u^i\|_{H^1(\omf)}^2
        +
        \|q^i\|_{H^1(\omg)^*}^2
    \right)
    \leq C.
\]
Moreover,
\[
    h\sum_{i=0}^{N-1}
    \left(
        \|w^{i+1}\|_{H^1(\omg)}^2
        +
        \|p^{i+1}\|_{L^2(\omg)}^2
    \right)
    \leq C.
\]
\end{proof}

\begin{remark}
    \begin{itemize}
        \item These estimates already imply, up to a subsequence,
        \[
            u_h\xrightharpoonup{*}u
            \quad\text{in }L^\infty\bigl(S;H^1(\omf)^3\bigr),\quad
            w_h\rightharpoonup w
            \quad\text{in }L^2\bigl(S;H^1(\omg)^3\bigr),
        \]
        \[
            p_h\rightharpoonup p
            \quad\text{in }L^2\bigl(S;L^2(\omg)\bigr),
            \qquad
            q_h\xrightharpoonup{*}q
            \quad\text{in }L^\infty\bigl(S;H^1(\omg)^*\bigr)
        \]
        which is enough to identify
        \[
            q=cp+\alpha\dive^g w.
        \]
        At this stage, however, we neither control the pressure gradient nor
        the relevant discrete time derivatives. Thus, we cannot yet pass to
        the term \(\mathcal Kp_h\), identify
        \[
            w=u+\delta\partial_tu,
        \]
        or obtain an \(L^2(S;H^1(\omg)^*)\)-bound for
        \(\partial_tq_h\). These bounds are established in the following
        higher-order estimate.  
        \item Note that these estimates hold without the higher regularity and compatibility conditions on the data.
        These only become necessary for the missing estimates.
    \end{itemize}
\end{remark}

\begin{lemma}[Higher-order uniform estimates]
\label{lem:higher_estimate_eta_alpha}
Assume in addition that
\[
    f^f\in H^2\bigl(S;L^2(\omf)^3\bigr),
    \qquad
    f^g\in H^1\bigl(S;L^2(\omg)^3\bigr),
\]
that
\[
    p_{\e,0}\in H^1(\omg),
\]
and that the initial data satisfy
\[
    w^0=u_{0}+\delta v_{0}
\]
and
\[
    \mathcal E^fu_{0}
    +
    \mathcal E^gw^0
    +
    \alpha\nabla^gp_{0}
    =
    f(0)
    \quad\text{in }H^{-1}(\Omega)^3.
\]
Then there exist \(h_0>0\) and \(C>0\), independent of
\(h\in(0,h_0]\), such that
\[
    \max_{0\leq i\leq N}
    \|p^i\|_{H^1(\omg)}^2
    +
    h\sum_{i=0}^{N-1}\left(\|\derh u^{i+1}\|_{H^1(\Omega)}^2+
    \|\derh p^{i+1}\|_{L^2(\omg)}^2+\|\derh q^{i+1}\|_{H^1(\omg)^*}^2\right)
    \leq C.
\]
\end{lemma}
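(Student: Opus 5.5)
The plan is to derive a second energy estimate for the discrete system $(P_{h,\eta})$ by working with the time-differenced equations. We test the differenced mechanics with $\derh w^{i+1}$ and the mass balance with $\derh p^{i+1}$.

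\emph{Step 1: discrete time derivative of the system.} Subtracting \eqref{eq:discrete_vp_mechanics} at levels $i+1$ and $i$ and dividing by $h$ gives, for $i\ge1$,
\[
\mathcal E^f\derh u^{i+1}+\mathcal E^g\derh w^{i+1}+\alpha\nabla^g\derh p^{i+1}=\derh f^{i+1}.
\]
For $i=0$ we use the compatibility condition $\mathcal E^fu_0+\mathcal E^gw^0+\alpha\nabla^gp_0=f(0)$. This replaces the missing level $-1$, and the resulting identity carries the defect $h^{-1}(f^1-f(0))$, which is bounded by $\|\partial_tf\|_{L^\infty(S;L^2)}$ since $f\in H^1$ in time. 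The mass balance already reads $c\derh p^{i+1}+\alpha\dive^g\derh w^{i+1}+\mathcal Kp^{i+1}=g^{i+1}$.

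\emph{Step 2: the energy identity.} We test the differenced mechanics with $h\derh w^{i+1}$ and the mass balance with $h\derh p^{i+1}$. The coupling terms $\pm\alpha h(\derh p^{i+1},\dive^g\derh w^{i+1})$ cancel. The pressure term gives
\[
h\langle\mathcal Kp^{i+1},\derh p^{i+1}\rangle=\tfrac12\bigl(\|p^{i+1}\|_{\mathcal K}^2-\|p^i\|_{\mathcal K}^2+\|p^{i+1}-p^i\|_{\mathcal K}^2\bigr),
\]
using $p_0\in H^1(\omg)$. We also obtain $h\|\derh w^{i+1}\|_{\mathcal E^g}^2+hc\|\derh p^{i+1}\|^2$. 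For the fibre term we use $\derh w^{i+1}=\derh u^{i+1}+\delta\derh(\derh u^{i+1})$, which comes from differencing the ODE $\delta\derh u+u=w$. Exactly as in \eqref{eq:fibre_energy_identity}, this yields
\[
h\|\derh u^{i+1}\|_{\mathcal E^f}^2+\tfrac{\delta}{2}\bigl(\|\derh u^{i+1}\|_{\mathcal E^f}^2-\|\derh u^i\|_{\mathcal E^f}^2+\cdots\bigr).
\]
The starting value $\derh u^1=(w^1-u^0)/(h+\delta)$ has to be controlled. To do this we compare $w^1$ with $w^0$ through the first step and the compatibility condition, which gives $\|\derh u^1\|_{H^1(\omf)}\le C$. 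Near $v_0=(w^0-u_0)/\delta$ this is exactly where $v_0\in H^1_0$ enters.

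\emph{Step 3: the force terms.} On $\omg$, the term $h(\derh f^{i+1},\derh w^{i+1})_{\omg}$ is absorbed by Young's inequality into $h\|\derh w^{i+1}\|_{\mathcal E^g}^2$ with uniform Korn (\cref{lem:korn}), and the remainder is controlled by $\|\partial_t f^g\|_{L^2}$. On $\omf$, the term $\delta h(\derh f^{i+1},\derh\derh u^{i+1})$ is handled by discrete summation by parts as in \eqref{eq:force_summation_by_parts}. This produces boundary terms $\delta(\derh f^{j+1},\derh u^{j+1})$, absorbed into $\tfrac\delta4\|\derh u^{j+1}\|_{\mathcal E^f}^2$, plus $\sum h(\derh\derh f,\derh u)$. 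Bounding these uses $f^f\in H^2(S;L^2)\hookrightarrow W^{1,\infty}$, which is precisely why $H^2$ regularity is required. We then sum, choose $\gamma$ small, and apply \cref{lem:gronwall}; the restriction $h\le h_0$ lets us absorb the implicit Grönwall term at the final level. This gives $\max_i\|\nabla p^i\|^2$, together with $L^2$ control of $\derh p$, $\derh w$ on $\omg$, and $\derh u$ on $\omf$. Adding the mean-value part from \cref{lem:preliminary_estimate_eta_alpha} or \cref{lem:poincare_wirtinger} yields the full $H^1$ bound on $p^i$.

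\emph{Step 4: the remaining quantities.} On $\omg$ we write $\derh u^{i+1}=(w^{i+1}-u^{i+1})/\delta$ and bound it by the preliminary $w$ bound together with $\|u^{i+1}\|_{H^1(\omg)}$. That last quantity follows from the ODE, since $u$ is a convex combination of earlier $w$'s and $u_0$. Gluing with $\omf$ gives $\derh u^{i+1}\in H^1_0(\Omega)$ in $\ell^2_h$. Finally, $\derh q^{i+1}=g^{i+1}-\mathcal Kp^{i+1}$ is bounded in $H^1(\omg)^*$ by the $p$-bound.

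I expect the main obstacle to be the $i=0$ step. It requires showing that the compatibility condition makes the first discrete time derivatives, $\derh w^1$ and $\derh u^1$, bounded uniformly in $h$, so that the telescoped energy starts from an $h$-independent value.
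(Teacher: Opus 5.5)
Your proposal is correct and follows essentially the same route as the paper. You difference the mechanical equation, closing the first step with the compatibility condition and $f^0:=f(0)$, then test with $h\derh w^{i+1}$ and $h\derh p^{i+1}$ so the coupling cancels. You use $\derh w^{i+1}=\derh u^{i+1}+\delta\derh(\derh u^{i+1})$ to telescope the fibre energy, sum the fibre force by parts using $f^f\in H^2(S;L^2(\omf)^3)$, and apply Grönwall. Finally you recover the gel velocity from the relaxation equation and $\derh q^{i+1}$ from the mass balance.

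The differences are cosmetic. The paper sets $\derh u^0:=v_{\e,0}$, so the telescoped energy starts at $\|v_{\e,0}\|_{\mathcal E^f}^2$ and no separate bound on $\derh u^1$ is needed. It also derives $\max_i\|w^i\|_{H^1(\omg)}$ before bounding $\derh u^{i+1}$ in the gel, whereas your convex-combination argument gives the required $\ell^2_h$ bound directly from the preliminary estimate.
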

\begin{proof}
    Set
\[
    v^{i+1}:=\derh u^{i+1},
    \qquad
    v^0:=v_{\e,0}.
\]
Then
\[
    w^{i}=u^{i}+\delta v^{i}
\]
and hence
\[
    \derh w^{i}
    =
    v^{i}+\delta\derh v^{i}.
\]
Due to the initial compatibility condition, the discrete mechanical
equation also holds at \(i=0\). Taking the difference of the mechanical
equations at two consecutive time levels gives
\begin{equation}
    \mathcal E^f v^{i+1}
    +
    \mathcal E^g\derh w^{i+1}
    +
    \alpha\nabla^g\derh p^{i+1}
    =
    \derh f^{i+1}
    \quad\text{in }H^{-1}(\Omega)^3.
    \label{eq:discrete_differentiated_mechanics}
\end{equation}
The discrete mass balance can be written as
\begin{equation}
    c\derh p^{i+1}
    +
    \alpha\dive^g\derh w^{i+1}
    +
    \mathcal Kp^{i+1}
    =
    g^{i+1}
    \quad\text{in }H^1(\omg)^*.
    \label{eq:discrete_mass_second_estimate}
\end{equation}

We test \eqref{eq:discrete_differentiated_mechanics} with
\(h\derh w^{i+1}\) and
\eqref{eq:discrete_mass_second_estimate} with
\(h\derh p^{i+1}\). The coupling terms cancel due to the adjoint
relation between \(\nabla^g\) and \(\dive^g\). We obtain
\begin{multline}
h\langle\mathcal E^fv^{i+1},\derh w^{i+1}\rangle
+
h\|\derh w^{i+1}\|_{\mathcal E^g}^2
+
ch\|\derh p^{i+1}\|_{L^2(\omg)}^2
+
h\langle\mathcal Kp^{i+1},\derh p^{i+1}\rangle\\
=
h(\derh f^{i+1},\derh w^{i+1})_{L^2(\Omega)}
+
h(g^{i+1},\derh p^{i+1})_{L^2(\omg)}.
\label{eq:second_discrete_energy}
\end{multline}
Using
\[
    \derh w^{i+1}
    =
    v^{i+1}+\delta\derh v^{i+1},
\]
we have
\begin{align}
h\langle\mathcal E^fv^{i+1},\derh w^{i+1}\rangle
=
h\|v^{i+1}\|_{\mathcal E^f}^2+
\frac{\delta}{2}
\Bigl(
    \|v^{i+1}\|_{\mathcal E^f}^2
    -
    \|v^i\|_{\mathcal E^f}^2
    +
    \|v^{i+1}-v^i\|_{\mathcal E^f}^2
\Bigr).
\label{eq:velocity_energy_identity}
\end{align}
Moreover,
\begin{align}
h\langle\mathcal Kp^{i+1},\derh p^{i+1}\rangle
=
\frac12
\Bigl(
    \|p^{i+1}\|_{\mathcal K}^2
    -
    \|p^i\|_{\mathcal K}^2
    +
    \|p^{i+1}-p^i\|_{\mathcal K}^2
\Bigr),
\label{eq:pressure_gradient_identity}
\end{align}
where
\[
    \|p\|_{\mathcal K}^2
    :=
    \langle\mathcal Kp,p\rangle
    =
    \int_{\omg}K\nabla p\cdot\nabla p\,\di x.
\]
The term involving \(g^{i+1}\) is estimated exactly as in
\cref{thm:existence_eta0}:
\[
\begin{aligned}
    h\left|
        (g^{i+1},\derh p^{i+1})_{L^2(\omg)}
    \right|
    &\leq
    \frac{ch}{4}
    \|\derh p^{i+1}\|_{L^2(\omg)}^2
    +
    Ch\|g^{i+1}\|_{L^2(\omg)}^2.
\end{aligned}
\]
For the force term, we distinguish between the gel and fibre parts.
In the gel part, Young's inequality and the coercivity of
\(\mathcal E^g\) yield
\[
\begin{aligned}
    h\left|
        (\derh f^{i+1},\derh w^{i+1})_{L^2(\omg)}
    \right|
    &\leq
    \gamma h
    \|\derh w^{i+1}\|_{\mathcal E^g}^2
    +
    C_\gamma h
    \|\derh f^{i+1}\|_{L^2(\omg)}^2.
\end{aligned}
\]
In the fibre part, we use
\[
\begin{aligned}
h(\derh f^{i+1},\derh w^{i+1})_{L^2(\omf)}
={}&
h(\derh f^{i+1},v^{i+1})_{L^2(\omf)}
\\
&+
\delta
(\derh f^{i+1},v^{i+1}-v^{i})_{L^2(\omf)}.
\end{aligned}
\]
The first term satisfies
\[
\begin{aligned}
h\left|
    (\derh f^{i+1},v^{i+1})_{L^2(\omf)}
\right|
\leq
\gamma h\|v^{i+1}\|_{\mathcal E^f}^2
+
C_\gamma h
\|\derh f^{i+1}\|_{L^2(\omf)}^2.
\end{aligned}
\]
The remaining term is treated after summation in time. Discrete
summation by parts gives
\begin{align}
&\delta\sum_{i=0}^j
(\derh f^{i+1},v^{i+1}-v^{i})_{L^2(\omf)}
\notag\\
&\qquad
=
\delta
(\derh f^{j+1},v^{j+1})_{L^2(\omf)}
-
\delta
(\derh f^{1},v^{0})_{L^2(\omf)}
\notag\\
&\qquad\quad
-
\delta\sum_{i=1}^j
(\derh f^{i+1}-\derh f^{i},v^{i})_{L^2(\omf)}.
\label{eq:second_force_summation}
\end{align}
The terminal term is estimated by
\[
\begin{aligned}
\delta
\left|
    (\derh f^{j+1},v^{j+1})_{L^2(\omf)}
\right|
\leq
\frac{\delta}{4}
\|v^{j+1}\|_{\mathcal E^f}^2
+
C\delta
\|\derh f^{j+1}\|_{L^2(\omf)}^2.
\end{aligned}
\]
The initial term is controlled by the prescribed initial velocity
\(v_{\e,0}\) and the trace of \(\partial_tf^f\) at \(t=0\).
Since
\[
    \derh f^{i+1}-\derh f^{i}
    =
    h(\derh)^2f^{i+1},
\]
we obtain
\begin{align}
&\delta\sum_{i=1}^j
\left|
    (\derh f^{i+1}-\derh f^{i},v^{i})_{L^2(\omf)}
\right|
\notag\\
&\qquad
\leq
\gamma h\sum_{i=1}^j
\|v^i\|_{\mathcal E^f}^2
+
C_{\gamma,\delta}
h\sum_{i=1}^j
\|(\derh)^2f^{i+1}\|_{L^2(\omf)}^2.
\label{eq:second_force_difference_estimate}
\end{align}
The discrete first and second derivatives of \(f\) are uniformly
controlled by
\[
    f^f\in H^2\bigl(S;L^2(\omf)^3\bigr),
    \qquad
    f^g\in H^1\bigl(S;L^2(\omg)^3\bigr).
\]
Summing \eqref{eq:second_discrete_energy} over
\(i=0,\ldots,j\), using
\eqref{eq:velocity_energy_identity}--%
\eqref{eq:second_force_difference_estimate}, and choosing
\(\gamma>0\) sufficiently small, we obtain
\begin{align}
&\|v^{j+1}\|_{\mathcal E^f}^2
+
\|p^{j+1}\|_{\mathcal K}^2
\notag\\
&\quad
+
h\sum_{i=0}^j
\left(
    \|v^{i+1}\|_{\mathcal E^f}^2
    +
    \|\derh w^{i+1}\|_{\mathcal E^g}^2
    +
    \|\derh p^{i+1}\|_{L^2(\omg)}^2
\right)
\notag\\
&\quad
+
\sum_{i=0}^j
\left(
    \|v^{i+1}-v^i\|_{\mathcal E^f}^2
    +
    \|p^{i+1}-p^i\|_{\mathcal K}^2
\right)
\notag\\
&\leq
C
+
Ch\sum_{i=0}^j
\|v^i\|_{\mathcal E^f}^2,
\label{eq:second_pre_gronwall_estimate}
\end{align}
where \(C\) is independent of \(h\). The discrete Grönwall inequality
therefore gives
\begin{align}
&\max_{0\leq i\leq N}
\left(
    \|v^i\|_{\mathcal E^f}^2
    +
    \|p^i\|_{\mathcal K}^2
\right)
\notag\\
&\quad
+
h\sum_{i=0}^{N-1}
\left(
    \|v^{i+1}\|_{\mathcal E^f}^2
    +
    \|\derh w^{i+1}\|_{\mathcal E^g}^2
    +
    \|\derh p^{i+1}\|_{L^2(\omg)}^2
\right)
\notag\\
&\quad
+
\sum_{i=0}^{N-1}
\left(
    \|v^{i+1}-v^i\|_{\mathcal E^f}^2
    +
    \|p^{i+1}-p^i\|_{\mathcal K}^2
\right)
\leq C.
\label{eq:second_uniform_estimate}
\end{align}
By Korn's inequality, \eqref{eq:second_uniform_estimate} implies
\[
    h\sum_{i=0}^{N-1}
    \|v^{i+1}\|_{H^1(\omf)}^2
    \leq C
\]
and
\[
    h\sum_{i=0}^{N-1}
    \|\derh w^{i+1}\|_{H^1(\omg)}^2
    \leq C.
\]
Together with the preliminary estimate
\[
    h\sum_{i=0}^{N-1}
    \|w^{i+1}\|_{H^1(\omg)}^2
    \leq C,
\]
this yields a uniform \(H^1\)-in-time estimate for the piecewise affine
interpolant of \(w^g\). In particular,
\[
    \max_{0\leq i\leq N}
    \|w^{i}\|_{H^1(\omg)}^2
    \leq C.
\]
The discrete relaxation equation gives
\[
    v^{i}
    =
    \frac1\delta\bigl(w^{i}-u^{i}\bigr).
\]
Since
\[
    u^{i+1}=a_hw^{i+1}+b_hu^i,
\]
the stability of the implicit Euler scheme, together with the preceding
bound for \(w^{i}\), yields
\[
    \max_{0\leq i\leq N}
    \|u^{i}\|_{H^1(\omg)}^2
    \leq C.
\]
Consequently,
\[
    \max_{0\leq i\leq N}
    \|v^{i}\|_{H^1(\omg)}^2
    \leq C.
\]
Combining the fibre and gel estimates, we conclude that
\begin{equation}
    h\sum_{i=0}^{N-1}
    \|\derh u^{i+1}\|_{H^1(\Omega)}^2
    \leq C.
    \label{eq:global_discrete_velocity_estimate}
\end{equation}
Moreover, \eqref{eq:second_uniform_estimate}, the bound for the mean
value of \(p^i\) obtained from the preliminary estimate, and the
Poincaré--Wirtinger inequality give
\[
    \max_{0\leq i\leq N}
    \|p^i\|_{H^1(\omg)}^2
    +
    h\sum_{i=0}^{N-1}
    \|\derh p^{i+1}\|_{L^2(\omg)}^2
    \leq C.
\]
Finally, the discrete mass balance implies
\[
    \derh q^{i+1}
    =
    g^{i+1}-\mathcal Kp^{i+1}.
\]
Hence,
\[
    h\sum_{i=0}^{N-1}
    \|\derh q^{i+1}\|_{H^1(\omg)^*}^2
    \leq C.
    \label{eq:discrete_fluid_content_derivative}
\]
\end{proof}

\begin{theorem}[Well-posedness for \(\eta=\alpha\delta\)]
\label{lemma:existence_eta_alpha}
There exists a unique weak solution \((u,p)\) in the sense of \cref{def:weak_solution} for $\eta=\alpha\delta$satisfying
\[
    u\in H^1\bigl(S;H_0^1(\Omega)^3\bigr),
    \qquad
    \partial_tu\in L^\infty\bigl(S;H^1(\omg)^3\bigr),
\]
and
\[
    p\in
    L^\infty\bigl(S;H^1(\omg)\bigr)
    \cap
    H^1\bigl(S;L^2(\omg)\bigr).
\]
Moreover,
\[
    q:=cp+\alpha\dive^g\bigl(u+\delta\partial_tu\bigr)
    \in H^1\bigl(S;H^1(\omg)^*\bigr).
\]
\end{theorem}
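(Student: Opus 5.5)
The proof will follow the pattern of the proof of \cref{thm:existence_eta0}. The input is the discrete solutions of $(P_{h,\eta})$ from \cref{lem:discrete_wellposedness_eta_alpha} and the uniform bounds of \cref{lem:preliminary_estimate_eta_alpha,lem:higher_estimate_eta_alpha}. From these I will build interpolants:
\begin{itemize}
\item piecewise constant ones $u_h,w_h,p_h,q_h$ on $(t^i,t^{i+1}]$, taking the values $u^{i+1},w^{i+1},p^{i+1},q^{i+1}$;
\item piecewise affine ones $\widehat u_h,\widehat w_h,\widehat p_h,\widehat q_h$, interpolating $(t^i,u^i)$ etc.
\end{itemize}
The higher-order estimate gives the following bounds:
\begin{itemize}
\item $\partial_t\widehat u_h$ is bounded in $L^2(S;H_0^1(\Omega)^3)$;
\item $\widehat p_h$ is bounded in $L^\infty(S;H^1(\omg))\cap H^1(S;L^2(\omg))$;
\item $\partial_t\widehat q_h$ is bounded in $L^2(S;H^1(\omg)^*)$;
\item $\widehat w_h$ is bounded in $H^1(S;H^1(\omg)^3)$;
\item $\partial_t\widehat u_h$ is bounded in $L^\infty(S;H^1(\omg)^3)$, from $\max_i\|v^i\|_{H^1(\omg)}\le C$.
\end{itemize}
The differences between constant and affine interpolants are $O(h)$ times these derivative norms, so they vanish. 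For instance, $\|u_h-\widehat u_h\|_{L^2(S;H^1)}\le h\|\partial_t\widehat u_h\|_{L^2(S;H^1)}$, and similarly for $p$ in $L^2(S;H^1(\omg))$, using $\sum_i\|p^{i+1}-p^i\|_{\mathcal K}^2\le C$ and Poincaré--Wirtinger.

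Extracting a subsequence, I obtain $u\in H^1(S;H_0^1(\Omega)^3)$, $w$, $p$ and $q$ with weak(-$*$) convergences in the spaces above. The limit of the bounded piecewise constant interpolant of $v^i$ is $\partial_tu$, so $\partial_tu\in L^\infty(S;H^1(\omg)^3)$ by weak-$*$ lower semicontinuity. Next I identify the limit relations, all of which are linear:
\begin{itemize}
\item The discrete relaxation equation reads $\delta\partial_t\widehat u_h+u_h=w_h$ on each interval, so $w=u+\delta\partial_tu$.
\item The mechanical equation $\mathcal E^fu_h+\mathcal E^gw_h+\alpha\nabla^gp_h=f_h$ passes to the limit tested with $\psi\in L^2(S;H_0^1(\Omega)^3)$. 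Rewriting it via $w$ gives \eqref{eq:weak_form_mechanic} for a.e.\ $t$.
\item The identity $q_h=cp_h+\alpha\dive^g w_h$ passes to the limit, giving $q=cp+\alpha\dive^g w$, which is the fluid content $\zeta$ for $\eta=\alpha\delta$.
\item The discrete mass balance $\partial_t\widehat q_h+\mathcal Kp_h=g_h$ passes to the limit in $L^2(S;H^1(\omg)^*)$, so $q\in H^1(S;H^1(\omg)^*)$. Since $p_h\rightharpoonup p$ in $L^2(S;H^1(\omg))$, the term $\mathcal Kp_h$ converges.
\end{itemize}
Integrating by parts in time against $\varphi$ with $\varphi(T)=0$ then yields \eqref{eq:weak_visco_porous}, provided $q(0)=\zeta_{\e,0}$.

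For the initial values I use that the affine interpolants satisfy $\widehat u_h(0)=u_0$, $\widehat w_h(0)=w_0$, $\widehat p_h(0)=p_0$ and $\widehat q_h(0)=cp_0+\alpha\dive^g w_0=\zeta_{\e,0}$. Weak convergence in $H^1$-in-time spaces, together with continuity of the trace at $t=0$, gives $u(0)=u_0$ in $H_0^1(\Omega)^3$, $w(0)=w_0$ in $H^1(\omg)^3$, $p(0)=p_0$ in $L^2(\omg)$ and $q(0)=\zeta_{\e,0}$. The regularity $w\in H^1(S;H^1(\omg)^3)$ required by \cref{def:weak_solution} also follows.

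The step I expect to be the main obstacle is uniqueness. The regularity gained from the data is only available for solutions in the class above, and the energy identity must be justified for a difference of solutions. I would take two solutions with the regularity of the theorem and form the difference $(\tilde u,\tilde w,\tilde p)$, which has zero data. I then mimic the preliminary estimate of \cref{lem:preliminary_estimate_eta_alpha} in continuous form:
\begin{itemize}
\item test the mechanical equation with $\tilde w=\tilde u+\delta\partial_t\tilde u$;
\item split $\tilde q$ into its mean and mean-free parts, and test the mean-free mass balance with $\mathcal K^{-1}\tilde q^\circ$.
\end{itemize}
The mean of $\tilde q$ stays zero because the mean of the zero source is zero. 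The coupling terms then cancel exactly as in the discrete computation, and the term $\delta\langle\mathcal E^f\tilde u,\partial_t\tilde u\rangle$ becomes $\frac{\delta}{2}\frac{d}{dt}\|\tilde u\|_{\mathcal E^f}^2$. Every pairing is well defined: $\tilde w\in L^2(S;H^1)$, $\partial_t\tilde q\in L^2(S;H^1(\omg)^*)$, and $\mathcal K^{-1}\tilde q^\circ\in L^2(S;H_\diamond^1(\omg))$. Hence the chain rule in the Lions--Magenes sense applies. Grönwall's lemma gives $\tilde u=0$ on $\omf$ and $\tilde q^\circ=0$, while the dissipation gives $\tilde w=0$ on $\omg$ and $\tilde p=0$. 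Finally, the ODE $\delta\partial_t\tilde u+\tilde u=\tilde w=0$ with $\tilde u(0)=0$ gives $\tilde u=0$ on $\omg$. Uniqueness also implies that the whole sequence converges.
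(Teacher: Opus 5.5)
Your proposal is correct and follows essentially the same route as the paper. You build interpolants from the discrete solutions using the bounds of \cref{lem:preliminary_estimate_eta_alpha,lem:higher_estimate_eta_alpha}, identify $w=u+\delta\partial_tu$ and $q=cp+\alpha\dive^g w$ by linearity, take the initial values from the affine interpolants, and prove uniqueness by testing the mechanics with $w$ and the mean-free mass balance with $\mathcal K^{-1}q$ so that the coupling terms cancel. The only differences are cosmetic: you read off $p(0)=p_0$ directly from the $H^1(S;L^2(\omg))$-bound on the affine pressure interpolant, whereas the paper deduces it from $q(0)=\zeta_0$ and $w(0)=w_0$; and in the uniqueness step no Gr\"onwall argument is actually needed, since the energy identity for the difference has no right-hand side.
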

\begin{proof}
The estimates of \cref{lem:preliminary_estimate_eta_alpha,lem:higher_estimate_eta_alpha} provide the uniform bounds required for the construction of the piecewise constant and affine time interpolants associated with
\[
    (u^i,w^i,p^i,q^i).
\]
The compactness argument and the passage to the limit in the discrete
mechanical and mass-balance equations are analogous to the proof of
\cref{thm:existence_eta0}.
We therefore only indicate the additional identifications specific to the present case.

Let \(\widehat u_h\) denote the piecewise affine interpolant of
\((u^i)\). Since
\[
    \delta\partial_t\widehat u_h+u_h=w_h,
\]
and the three terms are uniformly bounded in the corresponding spaces,
we may pass to the limit and obtain
\[
    w=u+\delta\partial_tu.
\]
Moreover, the weak convergences of \(p_h\) and \(w_h\), together with the continuity of \(\dive^g\), imply
\[
    q=cp+\alpha\dive^g w.
\]
Passing to the limit in the discrete equations yields
\[
    E^fu+E^gw+\alpha\nabla^gp=f
    \quad\text{in }L^2(S;H^{-1}(\Omega)^3),
\]
and
\[
    \partial_tq+Kp=g
    \quad\text{in }L^2(S;H^1(\omg)^*).
\]
The time-continuity of the corresponding affine interpolants, together
with Lemma~\ref{lem:higher_estimate_eta_alpha} and the initialization of
the discrete scheme, gives
\[
    u(0)=u_0,\qquad
    w(0)=w_0,\qquad
    q(0)=\zeta_0.
\]
Since
\[
    q=cp+\alpha\dive^g w,
    \qquad
    \zeta_0=cp_0+\alpha\dive^g w_0,
\]
we conclude that
\[
    p(0)=p_0
    \quad\text{in }L^2(\omg).
\]
Finally, since $w=u+\delta\partial_tu$,
\[
    cp+\alpha\dive^g(u+\delta\partial_tu)
    =
    q
    \in H^1\bigl(S;H^1(\omg)^*\bigr),
\]
and $(u,p)$ is a weak solution.

Now, let
\[
    (u_1,p_1)
    \qquad\text{and}\qquad
    (u_2,p_2)
\]
be two weak solutions corresponding to the same data, and set
\[
    u:=u_1-u_2,
    \qquad
    p:=p_1-p_2,\qquad 
    w:=u+\delta\partial_tu,
    \qquad
    q:=cp+\alpha\dive^gw.
\]
Then
\begin{subequations}
\label{eq:uniqueness_eta_alpha}
\begin{align}
    \mathcal E^fu+\mathcal E^gw+\alpha\nabla^gp
    &=0
    &&\text{in }H^{-1}(\Omega)^3,
    \label{eq:uniqueness_mechanics}
    \\
    \partial_tq+\mathcal Kp
    &=0
    &&\text{in }H^1(\omg)^*,
    \label{eq:uniqueness_mass}
    \\
    \delta\partial_tu+u
    &=w
    &&\text{in }H_0^1(\Omega)^3.
    \label{eq:uniqueness_relaxation}
\end{align}
\end{subequations}
Moreover,
\[
    u(0)=0,
    \qquad
    q(0)=0.
\]
Testing \eqref{eq:uniqueness_mass} with the constant function \(1\)
shows that
\[
    \frac{\di}{\di t}\overline q=0.
\]
Since \(\overline q(0)=0\), it follows that
\[
    \overline q(t)=0
    \qquad\text{for a.e. }t\in S.
\]
Thus, \(q(t)\in H_\diamond^1(\omg)^*\), and we may test
\eqref{eq:uniqueness_mass} with \(\mathcal K^{-1}q\). This gives
\[
    \frac12\frac{\di}{\di t}
    \|q\|_{\mathcal K^{-1}}^2
    +(p,q)_{L^2(\omg)}
    =0.
\]
Since
\[
    q=cp+\alpha\dive^gw,
\]
we have
\[
    (p,q)_{L^2(\omg)}
    =
    c\|p\|_{L^2(\omg)}^2
    +
    \alpha(p,\dive^gw)_{L^2(\omg)}.
\]
On the other hand, testing
\eqref{eq:uniqueness_mechanics} with \(w\) yields
\[
    \langle\mathcal E^fu,w\rangle
    +
    \|w\|_{\mathcal E^g}^2
    -
    \alpha(p,\dive^gw)_{L^2(\omg)}
    =0.
\]
Adding the preceding two identities, the coupling terms cancel, and
we obtain
\[
    \frac12\frac{\di}{\di t}
    \|q\|_{\mathcal K^{-1}}^2
    +
    \langle\mathcal E^fu,w\rangle
    +
    \|w\|_{\mathcal E^g}^2
    +
    c\|p\|_{L^2(\omg)}^2
    =0.
\]
By \eqref{eq:uniqueness_relaxation},
\[
\begin{aligned}
    \langle\mathcal E^fu,w\rangle
    &=
    \langle\mathcal E^fu,u+\delta\partial_tu\rangle
    \\
    &=
    \|u\|_{\mathcal E^f}^2
    +
    \frac{\delta}{2}
    \frac{\di}{\di t}\|u\|_{\mathcal E^f}^2.
\end{aligned}
\]
Consequently,
\[
\begin{aligned}
    \frac12\frac{\di}{\di t}
    \left(
        \delta\|u\|_{\mathcal E^f}^2
        +
        \|q\|_{\mathcal K^{-1}}^2
    \right)
    +
    \|u\|_{\mathcal E^f}^2
    +
    \|w\|_{\mathcal E^g}^2
    +
    c\|p\|_{L^2(\omg)}^2
    =0.
\end{aligned}
\]
Integration over \((0,t)\), together with \(u(0)=0\) and \(q(0)=0\),
gives
\[
\delta\|u(t)\|_{\mathcal E^f}^2
+\|q(t)\|_{\mathcal K^{-1}}^2
+
2\int_0^t
\left(
    \|u\|_{\mathcal E^f}^2
    +
    \|w\|_{\mathcal E^g}^2
    +
    c\|p\|_{L^2(\omg)}^2
\right)\di\tau
=0.
\]
Hence,
\[
    u=0\quad\text{in }S\times\omf,
    \qquad
    w=0\quad\text{in }S\times\omg,
    \qquad
    p=0\quad\text{in }S\times\omg.
\]
Finally, in the gel phase,
\[
    \delta\partial_tu+u=w=0,
    \qquad
    u(0)=0,
\]
and therefore \(u=0\) also in \(S\times\omg\)
\end{proof}

\begin{remark}
     This approach does not work for general $\eta\ne\alpha\delta$:
     the fluid content instead contains the combination
        \[
            u+\frac{\eta}{\alpha}\partial_tu,
        \]
        which coincides with \(w=u+\delta\partial_tu\) only if
        \(\eta=\alpha\delta\).
        Introducing
        \[
            z:=u+\frac{\eta}{\alpha}\partial_tu
        \]
        would therefore lead to the two distinct auxiliary variables \(w\) and \(z\).

    From a modelling perspective, the cases $\eta=0$ and $\eta=\alpha\delta$ are also the most natural ones in the present setting: the former corresponds to the standard Biot fluid content, while in the latter the viscous deformation entering the stress also enters the fluid content with the same relaxation time scale.
\end{remark}

\subsection{A-priori estimates}\label{sec:apriori}
With \cref{lemma:existence_eta_alpha,thm:existence_eta0}, we have unique weak solutions satisfying
\[
\begin{aligned}
    &u_\varepsilon
    \in
    L^\infty(S;H_0^1(\Omega)^3)
    \cap
    H^1(S;H_0^1(\Omega)^3),\\
    &p_\varepsilon
    \in
    L^\infty(S;L^2(\Omega_g^\varepsilon))
    \cap
    L^2(S;H^1(\Omega_g^\varepsilon))
    \cap
    H^1(S;H^1(\Omega_g^\varepsilon)^*)
\end{aligned}
\]
if \(\eta=0\). If \(\eta=\alpha\delta\), the solution additionally
satisfies
\[
    \partial_tu_\varepsilon
    \in L^\infty(S;H^1(\Omega_g^\varepsilon)^3),
    \qquad
    p_\varepsilon
    \in
    L^\infty(S;H^1(\Omega_g^\varepsilon))
    \cap
    H^1(S;L^2(\Omega_g^\varepsilon)).
\]

\begin{lemma}[A-priori estimates]\label{lem:apriori}
Let \((u_\e,p_\e)\) be the solution of
System~\eqref{system:abstract_operator_problem}.
Then the following
estimates hold uniformly with respect to \(\e\).

\begin{itemize}
    \item If $\eta=0$, we have
    \begin{equation}\label{eq:apriori:main}
  \|u_\e\|^2_{L^\infty(S;H^1(\Omega))}
  + \|\partial_t u_\e\|^2_{L^2(S;H^1(\Omega))}
  + \|p_\e\|^2_{L^\infty(S;\,L^2(\omg))}
  + \|\nabla p_\e\|^2_{L^2(S\times\omg)}
  \lesssim 
    1.
\end{equation}
\item If $\eta=\delta\alpha$, we have
\begin{multline}
\label{eq:apriori_eta_alpha_delta}
    \|u_\e\|_{L^\infty(S;H^1(\Omega))}^2
    +
    \|\partial_tu_\e\|_{L^2(S;H^1(\Omega))}^2
    +
    \|\partial_tu_\e\|_{L^\infty(S;H^1(\omg))}^2
    +\|\partial_tw_\e\|_{L^2(S;H^1(\omg))}^2
    \\
    +
    \|\nabla p_\e\|_{L^\infty(S;L^2(\omg))}^2
    +
    \|\partial_tp_\e\|_{L^2(S;L^2(\omg))}^2
    +
    \|\partial_tq_\e\|_
        {L^2(S;H^1(\omg)^*)}^2
    \lesssim 1.
\end{multline}
\end{itemize}
\end{lemma}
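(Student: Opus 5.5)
The plan is not to redo any energy estimate at the continuous level. Instead, I will revisit the discrete estimates of \cref{lemma:existence_eta0_discrete}, \cref{lem:preliminary_estimate_eta_alpha} and \cref{lem:higher_estimate_eta_alpha}, and track how their constants depend on $\e$. Those bounds then pass to the limit $h\to0$ by weak lower semicontinuity of norms along the interpolants constructed in \cref{thm:existence_eta0} and \cref{lemma:existence_eta_alpha}. The right-hand sides of all discrete estimates involve three kinds of quantities.
\begin{itemize}
\item[(a)] Norms of the data $f_\e,g_\e,u_{\e,0},p_{\e,0},v_{\e,0}$ in the spaces of \textbf{(A2)}. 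These are uniformly bounded by \textbf{(A3)}.
\item[(b)] Korn, Poincaré and Poincaré--Wirtinger constants on $\Omega$, $\omf$ and $\omg$. These are $\e$-uniform by \cref{lem:korn}, \cref{lem:poincare} and \cref{lem:poincare_wirtinger}, which rely on the extension operators of \cref{lem:extension}.
\item[(c)] The coercivity constants of $\mathcal C^f,\mathcal C^g,K$ and the parameters $\alpha,c,\delta$, which do not depend on $\e$.
\end{itemize}
The constant in the discrete Grönwall lemma \cref{lem:gronwall} depends only on $T$ and these constants. So the main task is to check that no hidden $\e$-dependence enters at any step.

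For $\eta=0$, the final estimate in the proof of \cref{lemma:existence_eta0_discrete} already bounds $\|p^j\|_{L^2(\omg)}$, $\|u^{j+1}\|_{H^1(\Omega)}$, $h\sum\|\nabla p^i\|^2$ and $h\sum\|e(\derh u^{i+1})\|_{L^2(\omg)}^2$ by data-controlled constants. The only $\e$-sensitive point there is the embedding $H^1(S;L^2(\omf))\hookrightarrow C(\overline S;L^2(\omf))$, whose constant depends only on $T$.

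What this gives is control of $\partial_tu_\e$ in $L^2(S;H^1(\omg))$ only, whereas the statement asks for $L^2(S;H^1(\Omega))$. The regularity argument in \cref{thm:existence_eta0} went through an elliptic solution operator on $\omf$ with Dirichlet data on $\Gamma_\e$, and its norm is not obviously $\e$-uniform. I therefore plan to obtain the fibre part differently. Differentiate the fibre equation in time, $\mathcal E^f\partial_tu_\e=\partial_tf_\e^f$ in $H^{-1}(\omf)$, and split
\[
\partial_tu_\e=\mathcal E^g_\e\bigl(\partial_tu_\e|_{\omg}\bigr)+z,
\]
where $\mathcal E^g_\e$ is the extension operator of \cref{lem:extension}. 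The extension vanishes on $(\partial\Omega)^g_\e$, so $z\in H^1_{(\partial\Omega)^f_\e\cup\Gamma_\e}(\omf)$ is zero on $\Gamma_\e$ and on the outer fibre boundary. Testing the equation for $z$ with $z$ itself and using Korn on $\omf$ (\cref{lem:korn}) gives
\[
\|z\|_{H^1(\omf)}\lesssim\|\partial_tf^f_\e\|_{L^2(\omf)}+\|\partial_tu_\e\|_{H^1(\omg)},
\]
with $\e$-independent constants. This step, controlling the fibre velocity uniformly in $\e$, is what I expect to be the main obstacle. Making it rigorous needs the differentiated equation in $L^2(S)$, which follows from $u_\e\in H^1(S;H^1_0)$. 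It also needs care that the extension operator respects the outer boundary condition; if it does not, I would compose it with a cutoff near $\partial\Omega$, using \cite{hopker_extension_2016}.

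For $\eta=\alpha\delta$, I track constants in the same way through \cref{lem:preliminary_estimate_eta_alpha} and \cref{lem:higher_estimate_eta_alpha}. One point needs checking: the bound for $\overline q^0$ and the initial term $\delta(\derh f^1,v^0)$ use $|\omg|^{-1/2}$, which is uniform because $|\omg|=|Y^g||\Omega|$. The fibre velocity is already controlled there by $h\sum\|v^{i+1}\|_{\mathcal E^f}^2$. The estimate of $u^i$ in $H^1(\omg)$ through the implicit Euler recursion $u^{i+1}=a_hw^{i+1}+b_hu^i$ uses $b_h<1$ and the bound on $\max_i\|w^i\|_{H^1(\omg)}$. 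That bound comes from the discrete $H^1$-in-time estimate, whose constant depends only on $T$.

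Passing $h\to0$ then yields each term of \eqref{eq:apriori_eta_alpha_delta}. In particular, $\partial_tq_\e=g_\e-\mathcal Kp_\e$ gives the $H^1(\omg)^*$ bound directly from the bound on $\nabla p_\e$. Finally, $\|u_\e\|_{L^\infty(S;H^1)}$ follows from $u_\e(0)=u_{\e,0}$ and the bound on $\partial_tu_\e$.
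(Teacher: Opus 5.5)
Your proposal is correct and follows the paper's proof. For $\eta=0$, the $\e$-uniform discrete energy bounds control $\partial_tu_\e$ only in the gel, and you recover the fibre velocity exactly as the paper does, by testing the time-differentiated fibre equation with $z=\partial_tu_\e-\mathcal E^g_\e(\partial_tu_\e|_{\omg})$; for $\eta=\alpha\delta$, you read everything off \cref{lem:preliminary_estimate_eta_alpha,lem:higher_estimate_eta_alpha} with $\e$-uniform geometric constants. One correction: $z=0$ on $(\partial\Omega)^f_\e$ does not follow from the extension vanishing on $(\partial\Omega)^g_\e$, but from the extension operator mapping functions with zero trace on $(\partial\Omega)^g_\e$ into $H_0^1(\Omega)$ (a property the paper already uses tacitly in the proofs of \cref{lem:korn,lem:poincare}), and your fallback of multiplying by a cutoff near $\partial\Omega$ would not work, because it destroys $z=0$ on $\Gamma_\e$ close to $\partial\Omega$.
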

\begin{proof}
    \emph{(i) Let \(\eta=0\).}
    The estimates obtained in the proof of
    \cref{thm:existence_eta0} yield
    \begin{multline*}
        \|u_\e\|_{L^\infty(S;H^1(\Omega)^3)}^2
        +
        \|\partial_tu_\e\|_{L^2(S;H^1(\omg)^3)}^2
        +
        \|p_\e\|_{L^\infty(S;L^2(\omg))}^2
        +
        \|\nabla p_\e\|_{L^2(S\times\omg)}^2
        \\
        \leq
        C\Bigl(
            \|g_\e\|_{L^2(S\times\omg)}^2
            +
            \|f_\e\|_{H^1(S;L^2(\omf)^3)}^2
            +
            \|f_\e\|_{L^2(S\times\omg)^3}^2
            +
            \|p_{\e,0}\|_{L^2(\omg)}^2
            +
            \|u_{\e,0}\|_{H^1(\Omega)^3}^2
        \Bigr).
    \end{multline*}
    The constant \(C>0\) is independent of \(\e\), since the extension, Korn, and Poincaré estimates used in the derivation are uniform in \(\e\). The right-hand side is uniformly bounded by Assumption~(A3).
    To extend the estimate for $\partial_tu_\e$ from the gel phase to the fibre phase, let
    \[
        \widetilde v_\e
        :=
        E_\e^g\bigl(\partial_tu_\e|_{\Omega_\e^g}\bigr).
    \]
    Since the traces of $\partial_tu_\e|_{\Omega_\e^f}$ and
    $\partial_tu_\e|_{\Omega_\e^g}$ agree on $\Gamma_\e$, the difference
    \[
        z_\e
        :=
        \partial_tu_\e|_{\Omega_\e^f}
        -
        \widetilde v_\e|_{\Omega_\e^f}
    \]
    has homogeneous trace on $\Gamma_\e$. Differentiating the quasistatic
    fibre equation in time, testing with $z_\e$, and using the uniform Korn
    inequality together with the uniform extension estimate, we also obtain
    \[
        \|\partial_tu_\e\|_{H^1(\Omega_\e^f)}
        \lesssim
        \|\partial_tf_\e^f\|_{L^2(\Omega_\e^f)}
        +
        \|\partial_tu_\e\|_{H^1(\Omega_\e^g)}.
    \]

    \emph{(ii) Let \(\eta=\alpha\delta\).}
    The preliminary and higher-order estimates established in
    \cref{lem:preliminary_estimate_eta_alpha,lem:higher_estimate_eta_alpha}
    imply
    \begin{multline*}
        \|u_\e\|_{L^\infty(S;H^1(\Omega)^3)}^2
        +
        \|\partial_tu_\e\|_{L^2(S;H^1(\Omega)^3)}^2
        +
        \|\partial_tu_\e\|_{L^\infty(S;H^1(\omg)^3)}^2
        +\|\partial_tw_\e\|_{L^2(S;H^1(\omg))}^2
        \\
        +
        \|p_\e\|_{L^\infty(S;H^1(\omg))}^2
        +
        \|\partial_tp_\e\|_{L^2(S;L^2(\omg))}^2
        +
        \|\partial_tq_\e\|_{L^2(S;H^1(\omg)^*)}^2
        \leq C,
    \end{multline*}
    where \(C>0\) is independent of \(\e\). Again, this follows from the
    \(\e\)-uniform geometric inequalities and the uniform bounds on the
    data and initial values in Assumption~(A3).
\end{proof}

\begin{remark}
Please note that the estimates above are uniform with respect to the microscopic parameter $\e$, but no uniformity with respect to the relaxation parameter $\delta>0$ is established.
Formally, the limit $\delta\to0$ connects the present visco-poroelastic model to the purely poroelastic setting considered in \cite{EM22}.
This is also heuristically consistent with our homogenized equations, where the additional viscous and memory contributions are expected to vanish as $\delta\to0$.
A rigorous justification of this singular limit would, therefore, require estimates that remain uniform as $\delta\to0$ (or some appropriate rescaling of the quantities that degenerate in this limit). 
Note that this is a separate singular-limit problem and it is left for future work.
\end{remark}

\section{Homogenization}
\label{sec:homog}

In this section, we rigorously pass to the limit $\e \to 0$ in
System~\eqref{eq:weak_form} and derive the effective (homogenized) equations. 
We use the method of \emph{periodic unfolding} \cite{CDG08,CDG02}, where, for a function $\varphi \in L^2(\Omega)$, the \emph{unfolding operator}
$\mathcal{T}_\e : L^2(\Omega) \to L^2(\Omega \times Y)$ is defined by
\[
  \mathcal{T}_\e(\varphi)(x, y)
  := \varphi\!\left(\e\!\left\lfloor\frac{x}{\e}\right\rfloor + \e y\right),
  \quad (x,y) \in \Omega \times Y.
\]
Here, $\lfloor \cdot \rfloor$ denotes the component-wise integer part.
We recall the key properties:
\begin{enumerate}[$(i)$]
  \item $\|\mathcal{T}_\e(\varphi)\|_{L^2(\Omega\times Y)} = \|\varphi\|_{L^2(\Omega)} + \mathcal{O}(\e)$.
  \item If $\varphi_\e \rightharpoonup \varphi$ in $L^2(\Omega)$, then
        $\mathcal{T}_\e(\varphi_\e) \rightharpoonup \varphi$ in
        $L^2(\Omega \times Y)$.
  \item If $\varphi_\e \in H^1(\Omega)$ with $\|\varphi_\e\|_{H^1(\Omega)} \leq C$,
        then there exist $\varphi \in H^1(\Omega)$ and $\varphi_1 \in L^2(\Omega; H^1_\#(Y))$
        such that, up to a subsequence,
        \[
          \mathcal{T}_\e(\varphi_\e) \rightharpoonup \varphi,
          \quad
          \mathcal{T}_\e(\nabla\varphi_\e)
          \rightharpoonup \nabla_x\varphi + \nabla_y\varphi_1
          \quad \text{in } L^2(\Omega \times Y).
        \]
\end{enumerate}
Here, $H^1_\#(Y)$ denotes the space of $Y$-periodic $H^1$ functions.
Similarly, denoting the trivial extension with zero to $\Omega$ for functions $\phi$ defined on $\Omega_\e^r$ ($r=f,g$) via $\hat\phi$, we also have 
\begin{enumerate}[$(i)$]
  \item $\|\mathcal{T}_\e(\hat\varphi)\|_{L^2(\Omega\times Y)} = \|\varphi\|_{L^2(\Omega_\e^r)} + \mathcal{O}(\e)$.
  \item If $\hat\varphi_\e \rightharpoonup \varphi$ in $L^2(\Omega)$, then
        $\mathcal{T}_\e(\hat\varphi_\e) \rightharpoonup \chi^r\varphi$ in
        $L^2(\Omega \times Y)$.
  \item If $\varphi_\e \in H^1(\Omega_\e^r)$ with $\|\varphi_\e\|_{H^1(\Omega_\e^r)} \leq C$, then there exist $\varphi \in H^1(\Omega)$ and $\varphi_1 \in L^2(\Omega; H^1_\#(Y))$
        such that, up to a subsequence,
        \[
          \mathcal{T}_\e(\hat\varphi_\e) \rightharpoonup \chi^r\varphi,
          \quad
          \mathcal{T}_\e(\nabla\varphi_\e)
          \rightharpoonup \chi^r(\nabla_x\varphi + \nabla_y\varphi_1)
          \quad \text{in } L^2(\Omega \times Y).
        \]
\end{enumerate}
Here, $\chi^r$ denotes the indicator function of the set $Y^r$.

The main result we are proving in this section is the convergence towards the following limit problem:
%

\begin{theorem}[Homogenised system]\label{thm:homog}
As $\e\to0$, the solutions $(u_\e,p_\e)$ in the sense of \cref{def:weak_solution} converge, in the sense of
Proposition~\ref{prop:twoscale} and up to a subsequence, to a solution $(u,p)$ of the following homogenised system:
Set
\begin{align*}
    \Sigma^{\rm h}(u,p)
    &:=
    \mathbb C^{\rm h}e(u)
    +\delta\mathbb D^{\rm h}e(\partial_tu)
    -A^{\rm h}p
    +\delta\partial_t\mathbb M^{\rm h}*e(\partial_tu)
    +\delta\partial_tR^{\rm h}*\partial_tp,\\
    F^{\rm h}(u,p)
    &:=
    c^{\rm h}p+A^{\rm h}:e(u)
    +\delta\bigl(
        -\partial_tR^{\rm h}*e(\partial_tu)
        +H^{\rm h}*\partial_tp
    \bigr),\\
    G^{\rm h}(u,p)
    &:=
    F^{\rm h}(u,p)-c|Y^g|p.
\end{align*}
Then
\begin{subequations}\label{eq:homog}
\begin{align}
    -\dive\Sigma^{\rm h}(u,p)
    &=f
    &&\text{in }S\times\Omega,\\
    \partial_t Q_\eta^{\rm h}(u,p)
    -\dive(K^{\rm h}\nabla p)
    &=|Y^g|g
    &&\text{in }S\times\Omega,\\
    u&=0
    &&\text{on }S\times\partial\Omega,\\
    -K^{\rm h}\nabla p\cdot n&=0
    &&\text{on }S\times\partial\Omega,
\end{align}
\end{subequations}
where
\[
    Q_\eta^{\rm h}(u,p)
    :=
    \begin{cases}
        F^{\rm h}(u,p),
        & \eta=0,\\[2mm]
        F^{\rm h}(u,p)+\delta\partial_tG^{\rm h}(u,p),
        & \eta=\alpha\delta.
    \end{cases}
\]
The solution satisfies
\[
    u\in H^1\bigl(S;H_0^1(\Omega)^3\bigr),
    \qquad
    p\in L^2\bigl(S;H^1(\Omega)\bigr),\qquad
    Q_\eta^{\rm h}(u,p)
    \in H^1\bigl(S;H^1(\Omega)^*\bigr),
\]
together with
\[
    u(0)=u_0,
    \qquad
    Q_\eta^{\rm h}(u,p)(0)=\zeta_0.
\]
The effective coefficients are given by \eqref{eq:Ceff}--\eqref{eq:Keff} and \eqref{eq:Qhom-def}--\eqref{eq:Ahom-def}, and the memory kernels by \eqref{eq:Mhom}--\eqref{eq:Hhom}.
\end{theorem}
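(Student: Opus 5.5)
The plan is the standard unfolding route: extract two-scale limits from the uniform bounds of \cref{lem:apriori}, pass to a coupled macro--micro limit system, and then eliminate the corrector with cell problems. The essential new point is that the corrector obeys a relaxation equation in time, so eliminating it produces convolution kernels.

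\emph{Compactness.} From \eqref{eq:apriori:main}, respectively \eqref{eq:apriori_eta_alpha_delta}, we have $u_\e$ bounded in $H^1(S;H_0^1(\Omega)^3)$, $p_\e$ bounded in $L^2(S;H^1(\omg))$, and, via \cref{lem:extension}, $\mathcal E_\e^g p_\e$ bounded in $L^2(S;H^1(\Omega))$. Applying property $(iii)$ of the unfolding operator pointwise in time, and then to the time derivatives, we obtain $u\in H^1(S;H_0^1(\Omega)^3)$ and $u_1\in H^1(S;L^2(\Omega;H^1_\#(Y)^3))$ with
\[
\mathcal T_\e(\nabla u_\e)\rightharpoonup\nabla_xu+\nabla_yu_1,\qquad
\mathcal T_\e(\nabla\partial_tu_\e)\rightharpoonup\nabla_x\partial_tu+\nabla_y\partial_tu_1 .
\]
For the pressure we get $p$ and $p_1$ with $\mathcal T_\e(\nabla p_\e)\rightharpoonup\chi^g(\nabla_xp+\nabla_yp_1)$. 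Strong convergence of $\mathcal E^g_\e p_\e$ in $L^2(S\times\Omega)$ follows from an Aubin--Lions type argument: we combine the bound on $\partial_t\zeta_\e$ in $H^1(\omg)^*$ with the $L^2H^1$ bound on $p_\e$, using the standard trick of testing the mass balance with $p_\e$ itself, or simply the $H^1(S;L^2)$ bound when $\eta=\alpha\delta$. This strong convergence is what is needed to pass to the limit in the coupling term $\alpha p_\e\dive\psi$.

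\emph{Two-scale limit system.} We test \eqref{eq:weak_form_mechanic} with $\psi(x)+\e\psi_1(x,x/\e)$ and \eqref{eq:weak_visco_porous} with $\varphi(t,x)+\e\varphi_1(t,x,x/\e)$, then unfold. This yields the cell relations, for a.e.\ $(t,x)$,
\[
-\dive_y\bigl(\mathcal C(y)(e_x(u)+e_y(u_1))+\delta\chi^g\mathcal C^g(e_x(\partial_tu)+e_y(\partial_tu_1))-\alpha\chi^g p\,\mathds I\bigr)=0 \quad\text{in } Y,
\]
and $-\dive_y(K(\nabla_xp+\nabla_yp_1))=0$ in $Y^g$ with no flux on $\Gamma$. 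We also obtain the macroscopic equations with averaged fluxes.

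The pressure corrector is standard: $p_1=\pi\cdot\nabla_xp$, which gives $K^{\rm h}$. The displacement corrector satisfies a linear first-order-in-time evolution on $H^1_\#(Y)$. This evolution is degenerate in $Y^f$ and nondegenerate in $Y^g$; written operatorwise it reads $\mathcal A_f u_1+\mathcal A_g(u_1+\delta\partial_tu_1)=$ data.

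\emph{Identification of coefficients.} We decompose $u_1=\boldsymbol\eta(y):e(u)+q(y)p+\delta\,(\text{relaxation part})$. Subtracting the quasistatic equilibrium corrector (the same stationary cell problems as in \cite{EM22}) leaves a homogeneous relaxation problem for the remainder $r$ with forcing $\partial_t(\boldsymbol\eta:e(u)+qp)$. Solving it via the relaxation cell functions, that is the evolution semigroup on $H^1_\#(Y)/\R$ generated by $-(\delta\mathcal A_g)^{-1}\mathcal A$ restricted appropriately, Duhamel's formula expresses $r$ as a convolution of kernels with $e(\partial_tu)$ and $\partial_tp$. Substituting back into the averaged stress and the averaged fluid content $\int_{Y^g}(cp+\alpha\dive_y u_1+\ldots)$ produces $\mathbb D^{\rm h}$, $\mathbb M^{\rm h}$, $R^{\rm h}$ and $H^{\rm h}$. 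The symmetric appearance of $R^{\rm h}$ in the two equations is obtained by testing the cell problems against one another.

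The initial value of $r$ vanishes precisely because of (A4), since the corrector at $t=0$ equals $u_{1,0}^{\rm eq}$. This gives $u(0)=u_0$ and, using the compatibility formula for $\zeta_0$, $Q_\eta^{\rm h}(0)=\zeta_0$. For $\eta=\alpha\delta$ the same steps are repeated with $w_\e$ and $w_1$; the additional term $\delta\partial_tG^{\rm h}$ arises from $\alpha\dive w=\alpha\dive u+\alpha\delta\dive\partial_tu$ once $c|Y^g|p$ is split off.

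\emph{Main obstacle.} The hard step is the degenerate relaxation cell problem: since $Y^f$ carries no viscosity, it is a differential--algebraic equation. Well-posedness and kernel regularity (in particular, that $\partial_t\mathbb M^{\rm h}$ and the other kernels are $L^1$ or $L^2$ in time) are not immediate. My first approach would be to eliminate the fibre part through the elliptic solution operator on $Y^f$ with Dirichlet data on $\Gamma$, as a Steklov--Poincaré reduction. This yields a nondegenerate ODE for the gel component with a coercive, bounded generator, hence exponential decay of the kernels. Finally, uniqueness of the limit system, which gives convergence of the whole sequence, follows from an energy argument analogous to the one in \cref{lemma:existence_eta_alpha}.
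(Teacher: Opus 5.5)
Your overall route is the paper's. It runs through unfolding compactness, the coupled two-scale system, stationary plus relaxation cell problems, a Duhamel representation of $u_1$ (this is \cref{lemma:u1charact}), and reciprocity between the two relaxation problems to get the same kernel $R^{\rm h}$ in both equations. Your Steklov--Poincar\'e elimination of the fibre part is the same device as the paper's restriction to the $a$-orthogonal complement of $\ker b$ in \cref{lem:relaxation_cell}.

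\textbf{The gap: initial data.} Given $u_1(0)=u_{1,0}^{\rm eq}=\boldsymbol\eta:e(u_0)+qp_0$, the remainder $r=u_1-\boldsymbol\eta:e(u)-qp$ satisfies
\[
r(0)=\boldsymbol\eta:e\bigl(u_0-u(0)\bigr)+q\bigl(p_0-p(0)\bigr).
\]
So (A4) alone does not give $r(0)=0$. You also need $u(0)=u_0$ and, crucially, $p(0)=p_0$. For $\eta=0$ the limit pressure is a priori only in $L^2(S;H^1(\Omega))$ and has no a priori time trace.

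Your text runs this argument backwards. You claim $r(0)=0$ yields $u(0)=u_0$, and you use the compatibility formula to obtain $Q_\eta^{\rm h}(0)=\zeta_0$. In fact $Q_\eta^{\rm h}(0)=\zeta_0$ comes for free from the initial term of the weak mass balance, and the compatibility formula is what yields $p(0)=p_0$.

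The correct order, as in the paper, is:
\begin{enumerate}
\item $u(0)=u_0$ follows from $u_\e\rightharpoonup u$ in $H^1(S;H_0^1(\Omega)^3)$ and $u_\e(0)=u_{\e,0}\rightharpoonup u_0$.
\item $u_1(0)=u_{1,0}^{\rm eq}$ follows from the bound on $\mathcal T_\e(\nabla u_\e)$ in $H^1(S;L^2(\Omega\times Y))$, continuity of the time trace, and the \emph{strong} convergence in (A4).
\item The averaged fluid content $\int_{Y^g}[cp+\alpha(\dive_xu+\dive_yu_1)]\di{y}$ has initial value $\zeta_0$, directly from the term $\int_{\omg}\zeta_{\e,0}\varphi(0)\di{x}$ and (A3).
\item Its displacement part is $H^1$ in time, so $p$ inherits a trace, and the compatibility relation for $\zeta_0$ gives $c|Y^g|(p(0)-p_0)=0$.
\end{enumerate}
Only then does $r(0)=0$ hold. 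Otherwise the relaxation dynamics would carry an extra homogeneous term generated by $r(0)=q(p_0-p(0))$, which is absent from the stated kernels.

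Relatedly, for $\eta=0$ your forcing $\partial_t(qp)$ is not a function one can evaluate pointwise in $x$. The Duhamel term $\partial_t\theta*\partial_tp$ must therefore be read distributionally as $k(0)p(t)-k(t)p_0+(\partial_tk*p)(t)$. This is exactly where $p(0)=p_0$ enters.

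\textbf{Smaller points.} The strong convergence of $\mathcal E_\e^gp_\e$ that you call essential is not needed. The coupling term is linear in $p_\e$ and the unfolded test function converges strongly, so weak convergence of $\mathcal T_\e(\widehat{p_\e})$ suffices; that is all the paper uses. For $\eta=0$ your Aubin--Lions sketch would in any case be delicate: the time-derivative bound lives in the $\e$-dependent dual $H^1(\omg)^*$, which the extension operator does not control. Finally, uniqueness of the limit system is not required, since the statement is only up to a subsequence.
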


\subsection{Limit functions, cell problems, and effective tensors}
From the uniform a-priori estimates of Section~\ref{sec:analysis}, we extract the
following convergences as $\e \to 0$, up to subsequences.

\begin{lemma}[Two-scale limits]
\label{prop:twoscale}
There exist limit functions
\[
    u\in H^1\bigl(S;H_0^1(\Omega)^3\bigr),
    \qquad
    u_1\in
    H^1\bigl(
        S;L^2(\Omega;H_\#^1(Y)^3)
    \bigr),
\]
and
\[
    p\in L^2(S;H^1(\Omega)),
    \qquad
    p_1\in
    L^2\bigl(
        S\times\Omega;
        H_\#^1(Y^g)
    \bigr),
\]
such that, up to a subsequence,
\begin{align*}
    u_\e
    &\xrightharpoonup{*}
    u
    &&\text{in }
    L^\infty(S;H_0^1(\Omega)^3),
    \\
    \mathcal T_\e(\nabla u_\e)
    &\rightharpoonup
    \nabla_xu+\nabla_yu_1
    &&\text{in }
    L^2(S\times\Omega\times Y)^{3\times3},
    \\
    \mathcal T_\e(\nabla\partial_tu_\e)
    &\rightharpoonup
    \nabla_x\partial_tu+\nabla_y\partial_tu_1
    &&\text{in }
    L^2(S\times\Omega\times Y)^{3\times3},
    \\
    \widehat p_\e
    &\rightharpoonup
    |Y^g|p
    &&\text{in }
    L^2(S\times\Omega),
    \\
    \mathcal T_\e(\widehat{\nabla p_\e})
    &\rightharpoonup
    \chi^g
    \bigl(
        \nabla_xp+\nabla_yp_1
    \bigr)
    &&\text{in }
    L^2(S\times\Omega\times Y)^3.
\end{align*}
Moreover, Assumption~\textbf{(A4)} identifies the initial trace of the displacement corrector:
\[
    u_1(0)=u_{1,0}^{\rm eq}.
\]

If $\eta=\alpha\delta$, set
\[
    w_\e:=u_\e+\delta\partial_tu_\e,
    \qquad
    w:=u+\delta\partial_tu.
\]
Then there exists
\[
    w_1^g\in
    H^1\bigl(
        S;L^2(\Omega;H_\#^1(Y^g)^3)
    \bigr)
\]
such that
\[
    \mathcal T_\e
    \bigl(
        \widehat{\nabla w_\e|_{\Omega_\e^g}}
    \bigr)
    \rightharpoonup
    \chi^g
    \bigl(
        \nabla_xw+\nabla_yw_1^g
    \bigr)
\]
in $L^2(S\times\Omega\times Y)^{3\times3}$.
Moreover,
\[
    w_1^g
    =
    \bigl(u_1+\delta\partial_tu_1\bigr)\big|_{Y^g},
    \qquad
    w_1^g(0)=w_{1,0}|_{Y^g}.
\]
In this case, we also have
\[
    \partial_tp\in L^2(S\times\Omega),
\]
and
\begin{align*}
    \widehat{\partial_tp_\e}
    &\rightharpoonup
    |Y^g|\partial_tp
    &&\text{in }L^2(S\times\Omega),
    \\
    \mathcal T_\e
    \bigl(
        \widehat{\partial_tp_\e}
    \bigr)
    &\rightharpoonup
    \chi^g\partial_tp
    &&\text{in }L^2(S\times\Omega\times Y).
\end{align*}
\end{lemma}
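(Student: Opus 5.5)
The plan is to combine the $\e$-uniform bounds of \cref{lem:apriori} with the standard compactness results for the unfolding operator recalled at the beginning of this section, applied pointwise in time and to the time derivatives.

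\emph{Displacements.} By \cref{lem:apriori}, $u_\e$ is bounded in $L^\infty(S;H_0^1(\Omega)^3)\cap H^1(S;H_0^1(\Omega)^3)$. Weak(-$*$) compactness therefore gives $u$ in $H^1(S;H_0^1(\Omega)^3)$. The time-dependent version of property $(iii)$ gives $u_1\in L^2(S\times\Omega;H^1_\#(Y)^3)$ with $\mathcal T_\e(\nabla u_\e)\rightharpoonup\nabla_xu+\nabla_yu_1$, and likewise $v_1$ for $\partial_tu_\e$. Since $\mathcal T_\e$ commutes with $\partial_t$, testing against $\partial_t\phi$ with $\phi\in C_c^\infty(S)$ identifies $\nabla_yv_1=\nabla_y\partial_tu_1$. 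Normalising $u_1$ to have zero $Y$-mean then gives $u_1\in H^1(S;L^2(\Omega;H^1_\#(Y)^3))$.

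\emph{Pressure.} For $\eta=0$, the bounds on $p_\e$ in $L^\infty(S;L^2)$ and on $\nabla p_\e$ in $L^2(S\times\omg)$ give, via the uniform extension $\mathcal E^g_\e$ of \cref{lem:extension}, a bounded family $\mathcal E_\e^gp_\e$ in $L^2(S;H^1(\Omega))$. Applying property $(iii)$ on the perforated domain yields $p$ and $p_1$. The identification $\widehat p_\e\rightharpoonup|Y^g|p$ requires strong $L^2$ convergence of the extensions. For this we use an Aubin--Lions/Simon-type argument: since $\partial_t(cp_\e+\alpha\dive u_\e)$ is bounded in $L^2(S;H^1(\omg)^*)$ and $\partial_t\dive u_\e$ is bounded in $L^2$, the difference quotients of the extended pressure are controlled in a dual norm, while the gradients are controlled in $L^2$. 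Alternatively, and more simply, weak convergence of $\mathcal T_\e(\widehat p_\e)$ together with the gradient limit already forces the limit to be $\chi^gp$ with $p$ independent of $y$. Integrating over $Y$ then gives $\widehat p_\e\rightharpoonup|Y^g|p$, and no strong convergence is needed here. For $\eta=\alpha\delta$, the extra bound on $\partial_tp_\e$ in $L^2(S\times\omg)$ gives $\partial_tp\in L^2$ and both stated convergences by the same reasoning.

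\emph{The $w$-corrector.} For $\eta=\alpha\delta$, $w_\e|_{\omg}$ and $\partial_tw_\e|_{\omg}$ are bounded in $L^2(S;H^1(\omg))$. Property $(iii)$ on $\omg$ produces $w_1^g$. By linearity of $\mathcal T_\e$, $\mathcal T_\e(\nabla w_\e)=\mathcal T_\e(\nabla u_\e)+\delta\mathcal T_\e(\nabla\partial_tu_\e)$ on $Y^g$. Its limit is therefore $\chi^g(\nabla_xw+\nabla_y(u_1+\delta\partial_tu_1))$, which gives $w_1^g=(u_1+\delta\partial_tu_1)|_{Y^g}$ up to a $y$-constant that we normalise away.

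\emph{Initial traces (the main obstacle).} Weak convergence alone does not pass the traces at $t=0$ to the corrector. The argument proceeds as follows.
\begin{enumerate}
\item Since $\mathcal T_\e(\nabla u_\e)$ is bounded in $H^1(S;L^2(\Omega\times Y))$, its weak limit in that space is $\nabla_xu+\nabla_yu_1$.
\item The trace map $H^1(S;X)\to X$ at $t=0$ is weakly continuous, so $\mathcal T_\e(\nabla u_{\e,0})\rightharpoonup\nabla_xu(0)+\nabla_yu_1(0)$.
\item By (A4), this limit equals $\nabla_xu_0+\nabla_yu_{1,0}^{\rm eq}$.
\item Since $u(0)=\lim u_{\e,0}=u_0$, we get $\nabla_y(u_1(0)-u_{1,0}^{\rm eq})=0$, so the difference is a $y$-constant. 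It vanishes after fixing the mean-zero normalisation, provided $\boldsymbol\eta$ and $q$ are normalised the same way.
\end{enumerate}
The same argument, using $w_\e\in H^1(S;H^1(\omg))$ and the second part of (A4), yields $w_1^g(0)=w_{1,0}|_{Y^g}$. The delicate points are the consistent normalisation of correctors and checking that (A4)'s strong convergence is only needed to match, not to obtain, the limit.
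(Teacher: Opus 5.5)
Your proposal is correct and follows essentially the same route as the paper. Both use the $\e$-uniform bounds from the a-priori lemma, unfolding compactness (with the uniform extension $\mathcal E_\e^g$ for the pressure), and weak continuity of the trace at $t=0$ in $H^1(S;L^2(\Omega\times Y))$ combined with (A4) and the zero-mean normalisation of the cell correctors; $w_1^g$ is then identified by linearity of $\mathcal T_\e$. The Aubin--Lions detour you first mention for the pressure is unnecessary, as you yourself conclude: the $\e$-scale gradient bound forces a $y$-independent unfolded limit, and that is exactly the gel-phase unfolding compactness the paper invokes.
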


\begin{proof}
The estimate in Lemma~\ref{lem:apriori} implies that $(u_\e)$ is
uniformly bounded in
\[
    H^1\bigl(S;H_0^1(\Omega)^3\bigr).
\]
Hence, up to a subsequence,
\[
    u_\e\rightharpoonup u
    \quad\text{in }H^1\bigl(S;H_0^1(\Omega)^3\bigr),
\]
and, in particular,
\[
    u_\e \xrightharpoonup{*} u
    \quad\text{in }L^\infty\bigl(S;H_0^1(\Omega)^3\bigr).
\]
Moreover,
\[
    \mathcal T_\e(\nabla u_\e)
    \rightharpoonup
    \nabla_xu+\nabla_yu_1
    \quad\text{in }L^2(S\times\Omega\times Y)^{3\times3}.
\]
Moreover, \cref{lem:apriori} shows that $\partial_tu_\e$ is bounded in $L^2\bigl(S;H^1(\Omega)^3\bigr)$.
Consequently,
\(
    \mathcal T_\e(\nabla u_\e)
\)
is bounded in
\[
    H^1\bigl(
        S;L^2(\Omega\times Y)^{3\times3}
    \bigr).
\]
Consequently,
\[
    \mathcal T_\e(\nabla u_\e)
    \rightharpoonup
    \nabla_xu+\nabla_yu_1
\]
weakly in this space, and therefore
\[
    \mathcal T_\e(\nabla\partial_tu_\e)
    \rightharpoonup
    \nabla_x\partial_tu+\nabla_y\partial_tu_1
\]
in \(L^2(S\times\Omega\times Y)^{3\times3}\). In particular,
\[
    u_1\in
    H^1\bigl(
        S;L^2(\Omega;H_\#^1(Y)^3)
    \bigr).
\]
By continuity of the trace operator at \(t=0\),
\[
    \mathcal T_\e(\nabla u_{\e,0})
    \rightharpoonup
    \nabla_xu(0)+\nabla_yu_1(0).
\]
On the other hand, Assumption~{\rm (A4)} gives
\[
    \mathcal T_\e(\nabla u_{\e,0})
    \to
    \nabla_xu_0+\nabla_yu_{1,0}^{\rm eq}.
\]
Since \(u(0)=u_0\), we obtain
\[
    \nabla_yu_1(0)
    =
    \nabla_yu_{1,0}^{\rm eq}.
\]
Choosing the microscopic correctors with zero mean therefore yields
\[
    u_1(0)=u_{1,0}^{\rm eq}.
\]

For the pressure, let
\[
    \widetilde p_\e:=E_\e^g p_\e
\]
be the uniform extension to \(\Omega\). By
\cref{lem:apriori,lem:extension}, the sequence
\((\widetilde p_\e)\) is bounded in
\(L^2(S;H^1(\Omega))\). Therefore, after passing to a further
subsequence,
\[
    \widetilde p_\e\rightharpoonup p
    \quad\text{in }L^2(S;H^1(\Omega)).
\]
The corresponding unfolding compactness result on the periodic gel
phase then gives
\[
    \widehat p_\e\rightharpoonup |Y^g|p
    \quad\text{in }L^2(S\times\Omega)
\]
and
\[
    \mathcal T_\e(\widehat{\nabla p_\e})
    \rightharpoonup
    \chi^g(\nabla_xp+\nabla_yp_1)
    \quad\text{in }L^2(S\times\Omega\times Y)^3.
\]

If $\eta=\alpha\delta$, the additional estimates in \cref{lem:apriori} also imply
\[
    \partial_tp\in L^2(S\times\Omega)
\]
which yields the additional convergences by the same extension and unfolding arguments.

Finally, let
\[
    w_\e:=u_\e+\delta\partial_tu_\e.
\]
The corresponding estimate for
\(w_\e|_{\Omega_\e^g}\) gives, up to a subsequence, a corrector
\[
    w_1^g\in
    H^1\bigl(
        S;L^2(\Omega;H_\#^1(Y^g)^3)
    \bigr)
\]
such that
\[
    \mathcal T_\e
    \bigl(
        \widehat{\nabla w_\e|_{\Omega_\e^g}}
    \bigr)
    \rightharpoonup
    \chi^g
    \bigl(
        \nabla_xw+\nabla_yw_1^g
    \bigr),
    \qquad
    w=u+\delta\partial_tu.
\]
By comparison with the convergences for \(u_\e\) and
\(\partial_tu_\e\),
\[
    w_1^g
    =
    \bigl(u_1+\delta\partial_tu_1\bigr)|_{Y^g}.
\]
The same trace argument as above, together with Assumption~{\rm (A4)},
gives
\[
    w_1^g(0)=w_{1,0}|_{Y^g}.
\]
\end{proof}

As the next step, we introduce some auxiliary problems (so-called cell problems) and the effective tensors calculated via their solutions.
To that end, we set $E_{kl}:= \frac{1}{2}(e_k\otimes e_l + e_l\otimes e_k)$ for $k,l=1,2,3$.\footnote{Note that $\{E_{kl} :1\le k\le l\}$ is an orthogonal basis of $\R^{3\times 3}_{\rm sym}$.}
Now, let $\eta_{kl}\in H^1_\#(Y)^3$, $\xi_{kl} \in H^1_\#(Y^g)^3$, $q\in H^1_\#(Y)^3$, and $\omega_k \in H^1_\#(Y^g)$ be solutions to the following elliptic problems:\footnote{Note that these solutions are unique up to constants.}
\begin{subequations}
\begin{align}
\int_Y \mathcal{C}\bigl(E_{kl} + e_y(\eta_{kl})\bigr) : e_y(\psi)\di{y} &= 0
  \quad \forall\,\psi \in H^1_\#(Y)^3,\label{eq:cell:elast}\\
  \int_{Y^g} C^g\bigl(E_{kl} + e_y(\xi_{kl})\bigr) : e_y(\psi)\di{y} &= 0
  \quad \forall\,\psi \in H^1_\#(Y^g)^3,\label{eq:cell:damp}\\
  \int_{Y}\mathcal{C}(y)e_y(q):e_y(\psi)\di{y}-\alpha\int_{\Gamma}\psi\cdot n\di{\sigma}&=0\quad \forall\,\psi \in H^1_\#(Y)^3,\label{eq:cell:pressure}\\
  \int_{Y^g} K(e_k + \nabla_y\omega_k) \cdot \nabla_y\phi\di{y} &= 0
  \quad \forall\,\phi \in H^1_\#(Y^g)\label{eq:cell:perm}.
\end{align}
Here, $C(y) = C^f$ in $Y^f$ and $C(y) = C^g$ in $Y^g$ and $e_y$ denotes the symmetric gradient with respect to $y\in Y$.
In addition, we introduce the following relaxation problems: Find $\zeta_{kl}\in H^1(S;H^1_\#(Y))^3$ and $\theta\in H^1(S;H^1_\#(Y))^3$ with initial conditions $\zeta_{kl}(0)=0$ and $\theta(0)=0$ and such that
\begin{align}
    \int_{ Y}\mathcal{C}e_y(\zeta_{kl}):e_y(\psi)\di{y}
    +\delta\int_{Y^g}\mathcal C^ge_y(\partial_t\zeta_{kl}):e_y(\psi)\di{y}
        &=-\int_{Y^g}\mathcal{C}^g(E_{kl}+e_y(\eta_{kl})):e_y(\psi)\di{y},\label{eq:cell_damping_kl}\\
      \int_{ Y}\mathcal{C}e_y(\theta):e_y(\psi)\di{y}
        +\delta\int_{Y^g}\mathcal C^ge_y(\partial_t\theta):e_y(\psi)\di{y}
        &=-\int_{Y^g}\mathcal C^ge_y(q):e_y(\psi)\di{y}\label{eq:cell_damping}  
\end{align}
\end{subequations}
for all $\psi\in H^1_\#(Y)^3$.
Again, we use $\zeta=(\zeta_{k,l})$ to denote the full matrix valued function.

Note that there is no explicit time dependency in these forcing terms on the right hand side, so these problems simply describe the evolution in time with respect to a constant (in time) forcing term.
These cell problems are used to account for the relaxation part:
For example, for $f=f(t,x)$, the function
\[
z_f(t,x,y)=\int_0^t\partial_t\theta(t-s,y)f(s,x)\di{s}
\]
solves the problem 
\begin{multline*}
  \int_{ Y}\mathcal{C}e_y(z_f):e_y(\psi)\di{y}
        +\delta\int_{Y^g}\mathcal C^ge_y(\partial_tz_f):e_y(\psi)\di{y}\\
        =-f(t,x)\int_{Y^g}\mathcal C^ge_y(q):e_y(\psi)\di{y} \quad \forall\,\psi \in H^1_\#(Y)^3.
\end{multline*}

Based on these cell solutions, we define the elasticity tensor $\mathbb C^{\rm h}\in \R^{3\times3\times3\times3}$, the damping tensor $\mathbb D^{\rm h}\in \R^{3\times3\times3\times3}$, the effective pressure tensor $A^{\rm h}\in \R^{3\times3}_{\rm sym}$, the effective storage content $c^{\rm h}\in \R$, and the effective permeability tensor $K^{\rm h}\in\R^{3\times3}_{\rm sym}$ via
\begin{subequations}
\begin{align}
    \mathbb C^{\rm h}_{ijkl}:
  &= \int_Y C(y)\bigl(E_{kl} + e_y(\eta_{kl})\bigr) : e_{ij}\di{y},\label{eq:Ceff}\\
   \mathbb D^{\rm h}_{ijkl}
  &:= \int_{Y^g} C^g\bigl(E_{kl} + e_y(\xi_{kl})\bigr) : e_{ij}\di{y},\label{eq:Deff}\\
   K^{\rm h}_{kl}
  &:= \int_{Y^g} K(e_k + \nabla_y\omega_k) \cdot e_l\di{y}\label{eq:Keff},\\
   c^{\rm h} &:= c|Y^g| + \alpha\int_{Y^g} \dive_yq \,\di{y}, \label{eq:Qhom-def}\\
     A^{\rm h}_{ij}
    &:=
    \alpha |Y^g|\delta_{ij}
    -
    \int_Y \big(\mathcal{C}(y) e_y(q)\big)_{ij}\,\di{y}, \label{eq:Ahom-def}
\end{align}
and the memory kernels are given by
\begin{align}
\mathbb{M}^{\rm h}_{ij\,kl}(t)
&:= \int_Y \bigl(\mathcal{C}(y)\,e_y(\zeta_{kl}(t))\bigr)_{ij}\,\di{y}
+\delta\int_{Y^g}\bigl(\mathcal{C}^g e_y(\partial_t\zeta_{kl}(t))\bigr)_{ij}\,\di{y},\label{eq:Mhom}\\
R^{\rm h}_{ij}(t)
&:= \int_Y \bigl(\mathcal{C}(y)\,e_y(\theta(t))\bigr)_{ij}\,\di{y}
+\delta\int_{Y^g}\bigl(\mathcal{C}^g e_y(\partial_t\theta(t))\bigr)_{ij}\,\di{y},\label{eq:Rhom}\\
H^{\rm h}(t) &:= \alpha\int_{Y^g} \dive_y\bigl(\partial_t\theta(t,y)\bigr) \,\di{y}\label{eq:Hhom}.
\end{align}
\end{subequations}
Note that, for $\eta=0$, convolution terms involving $\partial_t p$ (as in \cref{thm:homog}) are understood in the distributional sense in time.
More precisely, for $k\in H^1(S)$ we set
\[
    (k*\partial_t p)(t)
    :=
    k(0)p(t)-k(t)p_0+(\partial_t k*p)(t).
\]
This definition agrees with the usual convolution whenever $\partial_t p\in L^2(S\times\Omega)$.

\begin{table}[ht]
\centering
\footnotesize
\renewcommand{\arraystretch}{1.25}
\begin{tabular}{
    >{\centering\arraybackslash}p{1.4cm}
    >{\raggedright\arraybackslash}p{2.1cm}
    >{\raggedright\arraybackslash}p{3.0cm}
    >{\raggedright\arraybackslash}p{5.5cm}
}
\hline
\textbf{Quantity}
& \textbf{Cell problem}
& \textbf{Macroscopic unit loading}
& \textbf{Physical Interpretation}
\\
\hline

$\mathbb C^h$
& $\boldsymbol\eta_{kl}$ on $Y$
& Symmetric strain $E_{kl}$
& Effective elastic stiffness
\\

$\mathbb D^h$
& $\boldsymbol\xi_{kl}$ on $Y^g$
& Symmetric strain rate $E_{kl}$
& Effective instantaneous damping tensor
\\

$K^h$
& $\omega_k$ on $Y^g$
& Pressure gradient $e_k$
& Effective Darcy permeability
\\

$c^h$
& $q$ on $Y$
& Pore pressure
& Effective storage coefficient
\\

$A^h$
& $q$ on $Y$
& Pore pressure
& Effective Biot coupling tensor
\\

$\mathbb M^h(t)$
& $\boldsymbol\zeta_{kl}(t)$ on $Y$
& Strain relaxation
& Strain-to-stress memory kernel
\\

$R^h(t)$
& $\theta(t)$ on $Y$
& Pressure relaxation
& Pressure-to-stress and strain-to-content memory kernel
\\

$H^h(t)$
& $\theta(t)$ on $Y$
& Pressure relaxation
& Pressure-to-fluid-content memory kernel
\\
\hline
\end{tabular}
\caption{Summary of the effective tensors and memory kernels.}
\label{tab:effective_coefficients}
\end{table}

For all cell functions, we choose representatives with zero mean for definiteness.
Note that the effective coefficients and memory kernels are independent of this choice, since they depend only on the corresponding gradients or symmetric gradients.

Standard arguments show that $\mathbb C^{\rm h}$ and $\mathbb D^{\rm h}$ have the usual symmetries and are coercive on $\R_{\rm sym}^{3\times3}$, while $K^{\rm h}$ is symmetric and positive definite. 
Moreover, $A^{\rm h}\in\R_{\rm sym}^{3\times3}$ and, by testing
\cref{eq:cell:pressure} with $q$,
\[
    c^{\rm h}
    =
    c|Y^g|
    +
    \int_Y \mathcal C e_y(q):e_y(q)\,\di y
    \ge c|Y^g|>0.
\]
Finally, the relaxation problem solutions satisfy (see \cref{lem:relaxation_cell})
\[
    \zeta_{kl},\theta
    \in H^2\bigl(S;H^1_\#(Y)^3\bigr).
\]
As a consequence, the memory kernels have the time regularity required below; in particular,
\[
    M^h\in H^1(S),\qquad
    R^h\in H^2(S),\qquad
    H^h\in H^1(S).
\]
which ensures that all convolution terms appearing below are well defined.

\begin{remark}[Decay of the relaxation response]
Please note that the relaxation cell problems also exhibit a natural dissipative structure. 
To see this, suppose that the problems \eqref{eq:cell_damping_kl}--\eqref{eq:cell_damping} are extended to $t\in[0,\infty)$ and set
\[
    v_{kl}:=\partial_t\boldsymbol\zeta_{kl},\qquad
    E_{kl}(t):=
    \int_{Y^g}
        \mathbb C^g e_y(v_{kl}):e_y(v_{kl})\,\di y.
\]
Differentiating \eqref{eq:cell_damping_kl} in time and testing with $v_{kl}$ gives
\[
\frac{\delta}{2}\frac{\rm d}{\di t}
\int_{Y^g}
\mathbb C^g e_y(v_{kl}):e_y(v_{kl})\di y
+
\int_Y
\mathbb C(y)e_y(v_{kl}):e_y(v_{kl})\di y
=0.
\]
Since $\mathbb C=\mathbb C^g$ on $Y^g$ and both elasticity tensors are
positive definite,
\[
    \int_Y
        \mathbb C(y)e_y(v_{kl}):e_y(v_{kl})\,\di y
    \geq E_{kl}(t).
\]
Consequently,
\[
    E_{kl}'(t)+\frac{2}{\delta}E_{kl}(t)\leq0,
\]
which implies
\[
    E_{kl}(t)\leq E_{kl}(0)e^{-2t/\delta}.
\]
The same argument applies to $\partial_t\theta$. 
This is consistent with the interpretation of the resulting convolution term as fading-memory effects.
\end{remark}

\subsection{Proof of \cref{thm:homog}}
To derive the two-scale limit equations, we first take smooth test
functions and use the standard oscillating ansatz
\[
    \psi_\e(t,x)
    =
    \psi_0(t,x)
    +
    \e\psi_1\left(t,x,\frac{x}{\e}\right)
\]
in the mechanical equation. Then
\[
    \nabla\psi_\e
    =
    \nabla_x\psi_0
    +
    \nabla_y\psi_1\left(t,x,\frac{x}{\e}\right)
    +
    \mathcal O(\e).
\]
Similarly, in the time-integrated mass-balance equation we use
\[
    \phi_\e(t,x)
    =
    \phi_0(t,x)
    +
    \e\phi_1\left(t,x,\frac{x}{\e}\right),
    \qquad \phi_\e(T)=0.
\]
Here,
\[
    \partial_t\phi_\e
    =
    \partial_t\phi_0
    +
    \e\partial_t\phi_1,
    \qquad
    \nabla\phi_\e
    =
    \nabla_x\phi_0+\nabla_y\phi_1+\mathcal O(\e).
\]
Now, using \cref{prop:twoscale}, the properties of the unfolding operator, together with the convergence of the initial fluid contents from Assumption~\textbf{(A3)}, we can see that the two-scale limit
\[
(u,u_1,p,p_1)
\]
satisfies, for almost every \(t\in S\),
\begin{multline}
\int_{\Omega}\int_Y
\mathcal C(y)
\bigl(e_x(u)+e_y(u_1)\bigr)
:
\bigl(e_x(\psi_0)+e_y(\psi_1)\bigr)
\,\di{y}\di{x}\\
+\delta
\int_{\Omega}\int_{Y^g}
\mathcal C^g
\bigl(e_x(\partial_tu)+e_y(\partial_tu_1)\bigr)
:
\bigl(e_x(\psi_0)+e_y(\psi_1)\bigr)
\,\di{y}\di{x}\\
-\alpha
\int_{\Omega}\int_{Y^g}
p\,
\bigl(
\dive_x\psi_0+\dive_y\psi_1
\bigr)
\,\di{y}\di{x}
=
\int_{\Omega}f\cdot\psi_0\di{x},
\label{eq:two-scale-limit-mechanics}
\end{multline}
for all
\[
\psi_0\in H_0^1(\Omega)^3,
\qquad
\psi_1\in
L^2\bigl(\Omega;H_\#^1(Y)^3\bigr).
\]
Moreover,
\begin{multline}
-
\int_S\int_{\Omega}\int_{Y^g}
\Bigl[
cp
+
\alpha\bigl(
\dive_xu+\dive_yu_1
\bigr)
+
\eta\bigl(
\dive_x\partial_tu
+
\dive_y\partial_tu_1
\bigr)
\Bigr]
\partial_t\varphi_0
\,\di{y}\di{x}\di{t}\\
+
\int_S\int_{\Omega}\int_{Y^g}
K
\bigl(\nabla_xp+\nabla_yp_1\bigr)
\cdot
\bigl(\nabla_x\varphi_0+\nabla_y\varphi_1\bigr)
\,\di{y}\di{x}\di{t}\\
=
\int_S\int_{\Omega}|Y^g|g\varphi_0\di{x}\di{t}
+
\int_{\Omega}\zeta_0
\varphi_0(0)\di{x},
\label{eq:two-scale-limit-pressure}
\end{multline}
for all admissible test functions
\[
\varphi_0\in H^1\bigl(S;H^1(\Omega)\bigr),
\qquad
\varphi_1\in
L^2\bigl(S\times\Omega;H_\#^1(Y^g)\bigr),
\]
such that
\[
\varphi_0(T)=0,
\qquad
\varphi_1(T)=0.
\]
The initial traces of the microscopic correctors have already been identified in Proposition~\ref{prop:twoscale}.
In particular,
\[
    u_1(0)=u_{1,0}^{\rm eq}.
\]
If $\eta=\alpha\delta$, the auxiliary corrector satisfies
\[
    w_1^g(0)=w_{1,0}|_{Y^g}.
\]
Together with the compatibility relations following from
{\rm (A3)--(A4)}, this implies
\[
    c|Y^g|\bigl(p(0)-p_0\bigr)=0,
\]
and hence $p(0)=p_0$.

The next step is to decouple the problem into its macroscopic and microscopic parts.
\subsubsection{Decoupling the mechanical part}
This is somewhat delicate because of the strong coupling between the elastic fibre phase and the viscoelastic gel phase.
Owing to this coupling, the macroscopic material response cannot, in general, be represented solely by instantaneous effective elasticity and viscosity tensors.
Indeed, the microscopic displacement is determined by the problem
\begin{multline}\label{eq:micro_limit}
\int_{\Omega}\int_Y
\mathcal C(y)
\bigl(e_x(u)+e_y(u_1)\bigr)
:e_y(\psi_1)
\,\di{y}\di{x}\\
+\delta
\int_{\Omega}\int_{Y^g}
\mathcal C^g
\bigl(e_x(\partial_tu)+e_y(\partial_tu_1)\bigr)
:e_y(\psi_1)
\,\di{y}\di{x}-\int_{\Omega}\alpha p\int_{Y^g}\dive_y\psi_1\di{y}
=0,
\end{multline}
This microscopic equilibrium couples the instantaneous elastic response of the entire composite with the viscous response of the gel phase.
The microscopic deformation therefore redistributes dynamically between the two phases: the fibre responds purely elastically, whereas the gel exhibits a time-dependent Kelvin--Voigt response. Consequently, the microscopic displacement at a given time is not determined solely by the current macroscopic strain and pore pressure, but also by their loading histories.
After eliminating the microscopic displacement, this history dependence appears in the macroscopic constitutive law in the form of memory terms.

By Proposition~\ref{prop:twoscale},
\[
    u_1(0,x,y)
    =
    u_{1,0}^{\rm eq}(x,y)
    =
    \boldsymbol\eta(y):e(u_0(x))+q(y)p_0(x).
\]
Now, owing to the linear structure of the homogenization limit, we see that (the calculations are presented in \cref{lemma:u1charact})
\[
u_1=\eta:e_x(u)+qp+\delta\partial_t\zeta*e(\partial_tu)+\delta\partial_t\theta*\partial_tp
\]
The first two terms represent the instantaneous microscopic response to the current macroscopic strain and pore pressure and he convolution terms describe the delayed response generated by the strain-rate and pressure-rate histories.

Now, we introduce the macroscopic elastic--poroelastic and viscous
stresses by
\begin{align*}
\Sigma_{\rm el}^{\rm h}(u,p)&=\int_Y
\mathcal C(y)
\bigl(e(u)+e_y(u_1)\bigr)
\di{y}
-\alpha|Y^g|p,\\
\Sigma^{\rm h}_{\rm vis}(u,p)&=
\int_{Y^g}
\mathcal C^g
\bigl(e(\partial_tu)+e_y(\partial_tu_1)\bigr)
\di{y},
\end{align*}
and get (via \cref{eq:two-scale-limit-mechanics})
\[
\int_\Omega\left(\Sigma^{\rm h}_{\rm el}(u,p)+\delta\Sigma^{\rm h}_{\rm vis}(\partial_tu,\partial_tp)\right): e(\psi_0)\di{x}=\int_\Omega f\cdot\psi_0\di{x}.
\]
It is a bit technical to express $\Sigma^{\rm h}(u,p)$ explicitly in terms of cell functions (i.e., without relying on $u_1$) but using above characterization of $u_1$ this is clearly possible.
A technical but rather straightforward calculation shows that (we refer to \cref{lemma:characterization_limit_stresses})
\begin{align*}
\Sigma^{\rm h}_{\rm el}(u,p)&=
\mathbb{C}^{\rm h}e(u)-A^{\rm h}p,\\
\Sigma^{\rm h}_{\rm vis}(\partial_tu,\partial_tp)&=
\mathbb D^{\rm h}e(\partial_tu)
+\partial_t\mathbb M^{\rm h}*e(\partial_tu)+\partial_t\mathbb{R}^{\rm h}*\partial_tp.
\end{align*}

\subsubsection{Decoupling the poroelastic part}
For the poroelastic part, we first use
\[
  p_1(t,x,y) = \sum_{k=1}^3 \partial_{x_k}p(t,x)\,\omega_k(y)
\]
and $K^{\rm h}$ to slightly simplify \cref{eq:two-scale-limit-pressure}:
\begin{multline*}
-
\int_S\int_{\Omega}\int_{Y^g}
\Bigl[
cp
+
\alpha\bigl(
\dive_xu+\dive_yu_1
\bigr)
+
\eta\bigl(
\dive_x\partial_tu
+
\dive_y\partial_tu_1
\bigr)
\Bigr]
\partial_t\varphi
\,\di{y}\di{x}\di{t}\\
+
\int_S\int_{\Omega}
K^{\rm h}\nabla p
\cdot\nabla\varphi\di{x}\di{t}
=\int_S\int_{\Omega} |Y^g|g\varphi\di{x}\di{t}
+\int_{\Omega}
\zeta_0
\varphi(0)\di{x}
\end{multline*}
for all $\varphi\in H^1(S;H^1(\Omega))$ with $\varphi(T)=0$.
Using the representation of $u_1$ obtained above, the averaged microscopic fluid content can be expressed entirely in terms of the macroscopic variables.
More precisely, \cref{lemma:pressure_macroscopic} gives
\[
    \int_{Y^g}
    \Bigl[
        cp
        +\alpha\bigl(\dive_xu+\dive_yu_1\bigr)
        +\eta\bigl(
            \dive_x\partial_tu+\dive_y\partial_tu_1
        \bigr)
    \Bigr]
    \,\di y
    =
    Q_\eta^{\rm h}(u,p),
\]
where
\[
    Q_\eta^{\rm h}(u,p)
    :=
    \begin{cases}
        F^{\rm h}(u,p),
        & \eta=0,\\[1mm]
        F^{\rm h}(u,p)
        +\delta\partial_tG^{\rm h}(u,p),
        & \eta=\alpha\delta,
    \end{cases}
\]
with
\[
    F^{\rm h}(u,p)
    :=
    c^{\rm h}p
    +A^{\rm h}:e(u)
    +\delta
    \left(
        -\partial_tR^{\rm h}*e(\partial_tu)
        +H^{\rm h}*\partial_tp
    \right)
\]
and
\[
    G^{\rm h}(u,p)
    :=
    F^{\rm h}(u,p)-c|Y^g|p.
\]
%
Consequently,
\[
-\int_S
    \left\langle
        Q_\eta^{\rm h}(u,p),\partial_t\varphi
    \right\rangle
\,\di t
+
\int_S\int_\Omega
    K^{\rm h}\nabla p\cdot\nabla\varphi
\,\di x\,\di t=
\int_S\int_\Omega
    |Y^g|g\varphi
\,\di x\,\di t+
\int_\Omega
    \zeta_0\varphi(0)
\di x.
\]
Again, the detailed calculations are given in the appendix, see \cref{lemma:pressure_macroscopic}.

\appendix
\section{Calculations}
Here, we present some of the somewhat technical calculations used in the limit analysis.
These calculations are pretty straightforward but rather tedious, so we present them here for the convenience of the reader.

We start with a short lemma concerning the regularity of the solutions to the relaxation cell problem.
\begin{lemma}
\label{lem:relaxation_cell}
For every $k,l\in\{1,2,3\}$, the problems
\eqref{eq:cell_damping_kl}--\eqref{eq:cell_damping} admit unique zero-mean solutions satisfying
\[
    \boldsymbol\zeta_{kl},\theta
    \in
    H^2\bigl(S;H_\#^1(Y)^3\bigr).
\]
\end{lemma}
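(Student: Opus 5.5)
We set $V:=\{\psi\in H_\#^1(Y)^3:\int_Y\psi\,\di y=0\}$ and treat the relaxation cell problems as a degenerate linear evolution on $V$: the stiffness form
\[
a(\phi,\psi):=\int_Y\mathcal C e_y(\phi):e_y(\psi)\,\di y
\]
is coercive on $V$ (periodic Korn inequality modulo constants), whereas the damping form
\[
b(\phi,\psi):=\int_{Y^g}\mathcal C^g e_y(\phi):e_y(\psi)\,\di y
\]
is only positive semidefinite on $V$. This is exactly the cell analogue of the elastic/viscoelastic jump in \cref{sec:analysis}, so we cannot simply invert the operator in front of the time derivative. The right-hand sides in \eqref{eq:cell_damping_kl}--\eqref{eq:cell_damping} are time-independent functionals $\ell\in V^*$ of the form $\ell(\psi)=-b(g,\psi)$, with $g=E_{kl}y+\eta_{kl}$ (interpreted through $E_{kl}+e_y(\eta_{kl})$) or $g=q$. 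We therefore study the abstract problem $a(z,\psi)+\delta b(\partial_tz,\psi)=\ell(\psi)$ for all $\psi\in V$, with $z(0)=0$.

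\textbf{Existence and uniqueness.} We run the Rothe scheme of \cref{sssec:eta0} in the cell. Given $z^i$, we find $z^{i+1}\in V$ with
\[
h\,a(z^{i+1},\psi)+\delta\, b(z^{i+1}-z^i,\psi)=h\,\ell(\psi),
\]
which is uniquely solvable by Lax--Milgram since $ha+\delta b$ is coercive. We then test with $z^{i+1}-z^i$ and use the polarization identity exactly as in \cref{lemma:existence_eta0_discrete}. This gives uniform bounds for $z^i$ in $V$ and for $h\sum\|e_y(\derh z^{i+1})\|_{L^2(Y^g)}^2$. The term $h\,\ell(\derh z^{i+1})=-h\,b(g,\derh z^{i+1})$ is absorbed by Young's inequality because $\ell$ acts only through $e_y$ on $Y^g$; this is the key structural point. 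To control the time derivative on the fibre part $Y^f$, we take differences of consecutive equations. Since $\ell$ is constant in time, we obtain
\[
a(\derh z^{i+1},\psi)+\delta\, b(\derh(\derh z^{i+1}),\psi)=0 .
\]
Together with the first step, which is uniquely solvable, this yields the bound $\sup_i\|\derh z^{i}\|_V\le C$. Passing to the limit as in \cref{thm:existence_eta0} gives $z\in H^1(S;V)$. Uniqueness follows by testing the difference of two solutions with $\partial_t z$, which gives $a(z,z)+\tfrac{\delta}{2}\tfrac{\di}{\di t}b(z,z)=0$.

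\textbf{Higher regularity.} We formally set $v:=\partial_t z$, which solves $a(v,\psi)+\delta\, b(\partial_tv,\psi)=0$ with initial value $v(0)=v_0$. The value $v_0$ is determined by the $t=0$ equation $\delta\, b(v_0,\psi)=\ell(\psi)$ for all $\psi\in V$, because $z(0)=0$. This is the main obstacle. The equation only fixes $e_y(v_0)$ on $Y^g$, and the fibre part must be resolved by the constraint coming from differentiating the equation. Our approach is to define $v_0$ as the unique element of $V$ satisfying $b(v_0,\cdot)=\delta^{-1}\ell$ on $V$ together with $a(v_0,\psi)=0$ for all $\psi$ with $e_y(\psi)=0$ on $Y^g$. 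Concretely, we minimize $a(v,v)$ over the affine set $\{v\in V: b(v,\cdot)=\delta^{-1}\ell\}$. This set is nonempty because $\delta^{-1}\ell=b(-\delta^{-1}g,\cdot)$.

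Applying the first step to this $v$-problem, with the data $v_0$ replacing the zero initial value, gives $v\in H^1(S;V)$. The energy identity
\[
a(v,v)+\tfrac{\delta}{2}\tfrac{\di}{\di t}b(v,v)=0
\]
supplies the $b$-part. The $Y^f$-part of $\partial_tv$ is again controlled by the elliptic fibre problem with Dirichlet data $\partial_tv|_\Gamma$, exactly as in the fibre argument of \cref{thm:existence_eta0}. Finally, we verify that $z(t)=\int_0^tv\,\di s$ solves the original problem, which follows by integrating the $v$-equation and using the choice of $v_0$; uniqueness then identifies this $z$ with the solution constructed above. This gives $z\in H^2(S;V)$ and hence $\boldsymbol\zeta_{kl},\theta\in H^2\bigl(S;H_\#^1(Y)^3\bigr)$.
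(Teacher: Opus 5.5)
Your route (Rothe scheme, a compatible initial velocity $v_0$, then the differentiated problem integrated back) differs from the paper's and can be made to work. As written, however, it skips exactly the estimate the paper's proof is built on.

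You assert that the difference equation $a(\derh z^{i+1},\psi)+\delta b((\derh)^2 z^{i+1},\psi)=0$, ``together with the first step, which is uniquely solvable'', gives $\sup_i\|\derh z^i\|_V\le C$. Testing the difference equation with $(\derh)^2z^{i+1}$ only shows that $a(\derh z^i,\derh z^i)$ is non-increasing for $i\ge1$. Everything therefore rests on an $h$-uniform bound for $a(v^1,v^1)$, where $v^1:=\derh z^1$. But $v^1$ solves $h\,a(v^1,\cdot)+\delta b(v^1,\cdot)=\ell$, and testing with $v^1$ gives only $b(v^1,v^1)\le C$ and $h\,a(v^1,v^1)\le C$. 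Unique solvability says nothing about the fibre part, which a priori can grow like $h^{-1/2}$.

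What is missing is the following. Set $K:=\{\psi\in V:\ e_y(\psi)=0\text{ in }Y^g\}$. Testing with $\psi\in K$, where both $b(\cdot,\psi)$ and $\ell(\psi)$ vanish, shows $a(z^{i},\psi)=0$. Hence all iterates and their difference quotients lie in $K^{\perp_a}$. On $K^{\perp_a}$ one has $\|\psi\|_V\lesssim b(\psi,\psi)^{1/2}$, by periodic extension from $Y^g$ and periodic Korn on the connected gel phase. Equivalently, this is your own fibre-elliptic estimate for functions that are $\mathcal C^f$-harmonic in $Y^f$. This is precisely the lemma the paper proves. With it the difference-quotient step is superfluous, since the first energy estimate already bounds $h\sum_i\|\derh z^{i+1}\|_V^2$.

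The same fact is what makes ``applying the first step'' to the $v$-problem work: there $\derh v^1\in K^{\perp_a}$ exactly because you chose $a(v_0,\cdot)=0$ on $K$. Note also that the $L^2$-bound on $\partial_t v$ in $Y^g$ comes from testing with $\partial_tv$, not from the identity $a(v,v)+\frac{\delta}{2}\frac{\mathrm d}{\mathrm dt}b(v,v)=0$.

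A smaller error: for $\boldsymbol\zeta_{kl}$, your $g=E_{kl}y+\eta_{kl}$ is not periodic, hence not in $V$. So it does not show that $\{v\in V:\ \delta b(v,\cdot)=\ell\}$ is nonempty. Use the gel cell problem \eqref{eq:cell:damp} instead: $\delta^{-1}(\widetilde\xi_{kl}-\eta_{kl})$, with $\widetilde\xi_{kl}$ a periodic extension of $\xi_{kl}$ to $Y$ (minus its mean), lies in that set. For $\theta$, $g=q$ is fine.

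Compared with the paper: once solutions are known to lie in $K^{\perp_a}$ and $b$ is an equivalent inner product there, the relaxation problem is a linear ODE $\delta\partial_tz+\mathcal Az=F$. Here $\mathcal A$ is bounded on the Hilbert space $(K^{\perp_a},b)$ and $F$ is constant, so existence, uniqueness and $H^2$ (indeed $C^\infty$) time regularity are immediate. Your minimiser $v_0$ is exactly $\partial_tz(0)$ in that formulation. Your construction has the merit of making this compatible initial velocity explicit, which the later identification of $\mathbb D^{\rm h}$ uses. It is, however, considerably longer and needs the same key estimate twice.
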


\begin{proof}
Let
\[
    V:=H_\#^1(Y)^3/\R^3
\]
and define
\[
    a(v,\psi)
    :=
    \int_Y C(y)e_y(v):e_y(\psi)\,dy,
    \qquad
    b(v,\psi)
    :=
    \int_{Y^g}C^g e_y(v):e_y(\psi)\,dy.
\]
The form $a$ is coercive on $V$, whereas $b$ is non-negative.
Set
\[
    K:=\{v\in V:\ b(v,v)=0\}.
\]
The right-hand sides in \eqref{eq:cell_damping_kl}-\eqref{eq:cell_damping} vanish on $K$ which means that the solutions belong to the $a$-orthogonal complement 
\[
K^{\perp_a}:=\{v\in V\ : \ a(v,k)=0 \ \text{for all}\ k\in K\}.
\]
For $v\in K^{\perp_a}$, let $\mathcal E^g(v|_{Y^g})\in V$
denote a periodic $H^1$-extension of $v|_{Y^g}$ to $Y$.
Then
\[
    v-\mathcal E^g(v|_{Y^g})\in K,
\]
and hence, by the $a$-orthogonality,
\[
    a(v,v)
    =
    a\bigl(v,\mathcal E^g(v|_{Y^g})\bigr).
\]
Using the continuity of $a$, the periodic extension estimate, and the
periodic Korn inequality on the connected phase $Y^g$, we obtain
\[
    \|v\|_V
    \lesssim
    \|v\|_{H^1(Y^g)/\mathbb R^3}
    \lesssim
    \|e_y(v)\|_{L^2(Y^g)}
    \lesssim
    b(v,v)^{1/2}.
\]
Therefore, $b$ is coercive on $K^{\perp_a}$.
Consequently, the relaxation problems reduce to linear evolution equations of the form
\[
    \delta b(\partial_t z,\psi)+a(z,\psi)=F(\psi),
    \qquad z(0)=0.
\]
Standard Hilbert-space evolution theory therefore gives a unique
solution with the stated $H^1$-in-time regularity.
Since $F$ is independent of time, differentiating the evolution equation in time upgrades the time regularity to $H^2$.\footnote{With the same argument, we can show that the solutions are arbitrarily smooth in time, but $H^2$ is all that we need.}
\end{proof}

Now, we go on to show some of the characterizations used in the homogenization process.
We start by validating the claimed structure of $u_1$:
\[
u_1=\eta:e(u)+qp+\delta\partial_t\zeta*e(\partial_tu)+\delta\partial_t\theta*\partial_tp
\]
Localizing the microscopic limit problem \cref{eq:micro_limit} in $x\in\Omega$
\begin{multline*}
\int_Y
\mathcal C(y)
\bigl(e(u)+e_y(u_1)\bigr)
:e_y(\psi_1)
\,\di{y}\\
+\delta
\int_{Y^g}
\mathcal C^g
\bigl(e(\partial_tu)+e_y(\partial_tu_1)\bigr)
:e_y(\psi_1)
\,\di{y}-\alpha p\int_{Y^g}\dive_y\psi_1\di{y}
=0,
\end{multline*}
which is valid for all $\psi\in H^1_\#(Y)$ and for almost all $(t,x)\in S\times\Omega$.
Writing
\[
e(u) = \sum_{k,l=1}^3 e_{kl}(u)E_{kl}, \qquad \eta:e(u) = \sum_{k,l=1}^3 \eta_{kl}\,e_{kl}(u),
\]
we obtain, after inserting the representation of \(u_1\) and taking all quantities independent of \(y\) outside the corresponding cell integrals,
\begin{align*} 0={}& \sum_{k,l=1}^3 e_{kl}(u(t,x)) \int_Y \mathcal C(y) \bigl(E_{kl}+e_y(\eta_{kl}(y))\bigr) :e_y(\psi_1) \,\di{y} \\
&+ p(t,x) \left[ \int_Y \mathcal C(y)e_y(q(y)):e_y(\psi_1) \,\di{y} - \alpha \int_{Y^g} \dive_y\psi_1 \,\di{y} \right] \\
&+ \delta \sum_{k,l=1}^3 \int_0^t e_{kl}(\partial_su(s,x)) \left[ \int_Y \mathcal C(y) e_y\bigl(\partial_t\zeta_{kl}(t-s,y)\bigr) :e_y(\psi_1) \,\di{y} \right] \di{s} \\
&+ \delta \int_0^t \partial_sp(s,x) \left[ \int_Y \mathcal C(y) e_y\bigl(\partial_t\theta(t-s,y)\bigr) :e_y(\psi_1) \,\di{y} \right] \di{s} \\
&+ \delta \sum_{k,l=1}^3 e_{kl}(\partial_tu(t,x)) \int_{Y^g} \mathcal C^g \bigl(E_{kl}+e_y(\eta_{kl}(y))\bigr) :e_y(\psi_1) \,\di{y} \\
&+ \delta\,\partial_tp(t,x) \int_{Y^g} \mathcal C^g e_y(q(y)):e_y(\psi_1) \,\di{y} \\
&+ \delta^2 \sum_{k,l=1}^3 \int_{Y^g} \mathcal C^g e_y\left( \partial_t \left[ \int_0^t \partial_t\zeta_{kl}(t-s,y)\, e_{kl}(\partial_su(s,x)) \,\di{s} \right] \right) :e_y(\psi_1) \,\di{y} \\ 
&+ \delta^2 \int_{Y^g} \mathcal C^g e_y\left( \partial_t \left[ \int_0^t \partial_t\theta(t-s,y)\, \partial_sp(s,x) \,\di{s} \right] \right) :e_y(\psi_1) \,\di{y}\\
&=:\sum_{i=1}^8I_i.
\end{align*}

\begin{lemma}\label{lemma:u1charact}
The microscopic corrector is the unique solution of
\cref{eq:micro_limit} in
\[
    H^1\bigl(S;L^2(\Omega;H_\#^1(Y)^3/\mathbb R^3)\bigr)
\]
satisfying
\[
    u_1(0)=u_{1,0}^{\rm eq}.
\]
It is given by
\[
    u_1
    =
    \boldsymbol\eta:e(u)+qp
    +\delta\partial_t\boldsymbol\zeta*e(\partial_tu)
    +\delta\partial_t\theta*\partial_tp.
\]
\end{lemma}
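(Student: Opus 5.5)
The proof has two parts: uniqueness for \cref{eq:micro_limit} with initial datum $u_{1,0}^{\rm eq}$, and verification that the proposed formula solves it. Both parts work pointwise in $(x)$, with $(t,x)$ playing a parametric role. We use the bilinear forms $a$ and $b$ on $V=H_\#^1(Y)^3/\R^3$ from the proof of \cref{lem:relaxation_cell}.

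\emph{Uniqueness.} The difference $z$ of two solutions lies in $H^1(S;L^2(\Omega;V))$ and satisfies
\[
a(z,\psi)+\delta b(\partial_t z,\psi)=0\quad\text{for all }\psi\in V,\qquad z(0)=0 .
\]
We test with $\psi=\partial_t z$ and integrate over $(0,t)$. Since $a(z,\partial_t z)=\tfrac12\tfrac{\di}{\di t}a(z,z)$ and $b\ge0$, this gives $\tfrac12 a(z(t),z(t))\le 0$. Coercivity of $a$ on $V$ then yields $z=0$.

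\emph{Verification.} We insert the candidate $u_1$ and use the expansion $I_1,\dots,I_8$ displayed before the lemma.
\begin{itemize}
\item $I_1=0$ by \eqref{eq:cell:elast}.
\item $I_2=0$ by \eqref{eq:cell:pressure}. To see this, write $-\alpha\int_{Y^g}\dive_y\psi_1\di y$ as the interface integral $-\alpha\int_\Gamma\psi_1\cdot n\di\sigma$ via periodicity and the divergence theorem, keeping track of the sign of $n_\Gamma$.
\item The terms $I_3,I_4,I_7,I_8$ carry the convolutions. For $I_7$ and $I_8$ we differentiate under the integral, using $\partial_t(k*g)=k(0)g+\partial_tk*g$ with $k=\partial_t\zeta_{kl}$ and $k=\partial_t\theta$. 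The instantaneous parts $\delta^2\partial_t\zeta_{kl}(0)\,e_{kl}(\partial_tu)$ and $\delta^2\partial_t\theta(0)\,\partial_tp$ must be identified from the relaxation problem at $t=0$. Since $\zeta_{kl}(0)=0$, \eqref{eq:cell_damping_kl} gives
\[
\delta b(\partial_t\zeta_{kl}(0),\psi)=-\int_{Y^g}\mathcal C^g(E_{kl}+e_y(\eta_{kl})):e_y(\psi)\di y,
\]
and this cancels $I_5$ exactly. In the same way, $\theta(0)=0$ together with \eqref{eq:cell_damping} cancels $I_6$.
\item The remaining convolution parts are $I_3+I_7^{\rm mem}=\delta\int_0^t e_{kl}(\partial_su(s))\,[a(\partial_t\zeta_{kl},\psi)+\delta b(\partial_t^2\zeta_{kl},\psi)](t-s)\di s$, and the analogous expression for $I_4+I_8^{\rm mem}$ with $\theta$ and $\partial_s p$. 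The bracket vanishes because differentiating the relaxation problems in time (their right-hand sides are time independent) gives $a(\partial_t\zeta,\psi)+\delta b(\partial_t^2\zeta,\psi)=0$. This step is justified by the $H^2$-in-time regularity from \cref{lem:relaxation_cell}.
\end{itemize}

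\emph{Initial value.} At $t=0$ the convolutions vanish, so the formula gives $u_1(0)=\boldsymbol\eta:e(u_0)+qp(0)=u_{1,0}^{\rm eq}$, using $u(0)=u_0$ and $p(0)=p_0$.

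\emph{Regularity and main obstacle.} The main obstacle is the case $\eta=0$. There $\partial_tp$ exists only distributionally, so $\partial_t\theta*\partial_tp$ must be read through the definition $k*\partial_tp=k(0)p-kp_0+\partial_tk*p$ and the computation above needs justification. We would first do the whole verification for smooth $p$ (mollified in time, with the same initial value) and then pass to the limit. The formula is stable in $L^2$ under this interpretation because of the $H^2$-regularity of $\theta$.

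The same issue affects the membership of $u_1$ in $H^1(S;\cdot)$, since $\partial_t u_1$ formally contains $\partial_t p$ through the terms $qp$ and $\partial_t\theta*\partial_tp$. We would handle this by appealing to \cref{prop:twoscale}, which already gives $u_1\in H^1(S;L^2(\Omega;H_\#^1(Y)^3))$. The candidate then only has to be identified with $u_1$, and uniqueness can be applied in the class where $u_1$ lives, via the regularization argument just described.
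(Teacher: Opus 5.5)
Your proposal is correct and follows the paper's proof. The candidate is verified exactly as in the paper: $I_1=I_2=0$ from the stationary cell problems, and $I_3+I_5+I_7=0$, $I_4+I_6+I_8=0$ from the relaxation problems evaluated at $t=0$ together with their time derivatives. The initial value follows because the convolutions vanish at $t=0$, and your uniqueness argument is the energy estimate the paper only mentions.

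Your extra treatment of the case $\eta=0$ goes beyond the paper, but one step is worded too loosely. Identifying the candidate with $u_1$ through a time-mollified $p^n$ with $p^n(0)=p_0$ needs that energy estimate as a stability bound, not bare uniqueness. Concretely, the difference $z^n$ between the mollified candidate and $u_1$ satisfies $a(z^n(t),z^n(t))\lesssim\delta^{-1}\|p^n-p\|_{L^2}^2$.
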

\begin{proof}
Let
\[
    \widetilde u_1
    :=
    \boldsymbol\eta:e(u)+qp
    +\delta\partial_t\boldsymbol\zeta*e(\partial_tu)
    +\delta\partial_t\theta*\partial_tp.
\]
We first verify that $\widetilde u_1$ satisfies
\cref{eq:micro_limit}. After inserting this representation into
\cref{eq:micro_limit}, it remains to show that
\[
    \sum_{i=1}^8 I_i=0.
\]
Now, it is immediately clear that $I_1=0$ because $\eta_{kl}\in H^1_\#(Y)^3$ ($k,l=1,2,3$) solves \cref{eq:cell:elast} and $I_2=0$ because $q\in H^1_\#(Y)^3$ solves \cref{eq:cell:pressure}.

Next we look at $I_3+I_5+I_7$:
\begin{multline*}
I_3+I_5+I_7=\delta \sum_{k,l=1}^3 \int_0^t e_{kl}(\partial_su(s,x)) \left[ \int_Y \mathcal C(y) e_y\bigl(\partial_t\zeta_{kl}(t-s,y)\bigr) :e_y(\psi_1) \,\di{y} \right] \di{s} \\
+ \delta \sum_{k,l=1}^3 e_{kl}(\partial_tu(t,x)) \int_{Y^g} \mathcal C^g \bigl(E_{kl}+e_y(\eta_{kl}(y))\bigr) :e_y(\psi_1) \,\di{y} \\
+ \delta^2 \sum_{k,l=1}^3 \int_{Y^g} \mathcal C^g e_y\left( \partial_t \left[ \int_0^t \partial_t\zeta_{kl}(t-s,y)\, e_{kl}(\partial_su(s,x)) \,\di{s} \right] \right) :e_y(\psi_1) \,\di{y}.
\end{multline*}
For fixed \((t,x)\in S\times\Omega\), define
\[
Z_{kl}(t,x,y)
:=
\int_0^t
\partial_t\zeta_{kl}(t-s,y)\,
e_{kl}(\partial_su(s,x))
\,\di{s}
\]
and recall that \(\zeta_{kl}\in H^1(S;H^1_\#(Y))\cap H^2(S;H^1_\#(Y^g))\) solves \cref{eq:cell_damping_kl} with $\zeta_{kl}(0)=0$.
Differentiating \cref{eq:cell_damping_kl} with respect to time gives
\begin{equation}
\int_Y
\mathcal C(y)e_y(\partial_t\zeta_{kl}(t))
:e_y(\phi)\,\di{y}
+
\delta\int_{Y^g}
\mathcal C^g e_y(\partial_{tt}\zeta_{kl}(t))
:e_y(\phi)\,\di{y}
=0.
\label{eq:relaxation-cell-zeta-differentiated}
\end{equation}
Moreover, evaluating \cref{eq:cell_damping_kl} at \(t=0\) yields
\begin{equation}
\delta\int_{Y^g}
\mathcal C^g e_y(\partial_t\zeta_{kl}(0))
:e_y(\phi)\,\di{y}
=
-
\int_{Y^g}
\mathcal C^g
\bigl(E_{kl}+e_y(\eta_{kl})\bigr)
:e_y(\phi)\,\di{y}.
\label{eq:initial-trace-zeta}
\end{equation}
Using Leibniz' rule, we obtain
\begin{align*}
\partial_t Z_{kl}(t,x,y)
=
\partial_t\zeta_{kl}(0,y)\,
e_{kl}(\partial_tu(t,x))+
\int_0^t
\partial_{tt}\zeta_{kl}(t-s,y)\,
e_{kl}(\partial_su(s,x))
\,\di{s}.
\end{align*}
Consequently,
\begin{multline*}
\int_Y
\mathcal C(y)e_y(Z_{kl}(t,x))
:e_y(\psi_1)\,\di{y}
+\delta\int_{Y^g}
\mathcal C^g e_y(\partial_tZ_{kl}(t,x))
:e_y(\psi_1)\,\di{y}\\
=
\int_0^t
e_{kl}(\partial_su(s,x))
\Bigg[
\int_Y
\mathcal C(y)
e_y(\partial_t\zeta_{kl}(t-s))
:e_y(\psi_1)\,\di{y}
+
\delta\int_{Y^g}
\mathcal C^g
e_y(\partial_{tt}\zeta_{kl}(t-s))
:e_y(\psi_1)\,\di{y}
\Bigg]
\,\di{s}\\
+\delta\,e_{kl}(\partial_tu(t,x))
\int_{Y^g}
\mathcal C^g e_y(\partial_t\zeta_{kl}(0))
:e_y(\psi_1)\,\di{y}.
\end{multline*}
The term with the time integral vanishes by
\cref{eq:relaxation-cell-zeta-differentiated}, while the last term is,
by \cref{eq:initial-trace-zeta},
\[
-
e_{kl}(\partial_tu(t,x))
\int_{Y^g}
\mathcal C^g
\bigl(E_{kl}+e_y(\eta_{kl})\bigr)
:e_y(\psi_1)\,\di{y}.
\]
Hence,
\begin{multline*}
\int_Y
\mathcal C(y)e_y(Z_{kl})
:e_y(\psi_1)\,\di{y}
+
\delta\int_{Y^g}
\mathcal C^g e_y(\partial_tZ_{kl})
:e_y(\psi_1)\,\di{y}
\\
+
e_{kl}(\partial_tu)
\int_{Y^g}
\mathcal C^g
\bigl(E_{kl}+e_y(\eta_{kl})\bigr)
:e_y(\psi_1)\,\di{y}
=0.
\end{multline*}
Multiplying this identity by \(\delta\), summing over
\(k,l\in\{1,2,3\}\), and recalling the definition of \(Z_{kl}\), we
obtain
\[
I_3+I_5+I_7=0.
\]

Finally, we check the remaining terms,
\begin{multline}
I_4+I_6+I_8= \delta \int_0^t \partial_sp(s,x) \left[ \int_Y \mathcal C(y) e_y\bigl(\partial_t\theta(t-s,y)\bigr) :e_y(\psi_1) \,\di{y} \right] \di{s} \\
+ \delta\,\partial_tp(t,x) \int_{Y^g} \mathcal C^g e_y(q(y)):e_y(\psi_1) \,\di{y} \\
+ \delta^2 \int_{Y^g} \mathcal C^g e_y\left( \partial_t \left[ \int_0^t \partial_t\theta(t-s,y)\, \partial_sp(s,x) \,\di{s} \right] \right) :e_y(\psi_1) \,\di{y}.
\end{multline}
The argument showing $I_4+I_6+I_8=0$ is entirely analogous with $\theta$ instead of $\zeta_{kl}$.
For fixed \((t,x)\in S\times\Omega\), define
\[
Z_p(t,x,y)
:=
\int_0^t
\partial_t\theta(t-s,y)\,
\partial_sp(s,x)\,\di{s}.
\]
By Leibniz' rule,
\[
\partial_t Z_p(t,x,y)
=
\partial_t\theta(0,y)\,
\partial_tp(t,x)
+
\int_0^t
\partial_{tt}\theta(t-s,y)\,
\partial_sp(s,x)\,\di{s}.
\]
Moreover,
\[
\delta\,\partial_tp(t,x)
\int_{Y^g}
\mathcal C^g e_y(\partial_t\theta(0))
:e_y(\psi_1)\,\di{y}
=-
\partial_tp(t,x)
\int_{Y^g}
\mathcal C^g e_y(q)
:e_y(\psi_1)\,\di{y}.
\]
Therefore,
\[
\int_Y
\mathcal C(y)e_y(Z_p)
:e_y(\psi_1)\,\di{y}
+
\delta\int_{Y^g}
\mathcal C^g e_y(\partial_tZ_p)
:e_y(\psi_1)\,\di{y}
+
\partial_tp
\int_{Y^g}
\mathcal C^g e_y(q)
:e_y(\psi_1)\,\di{y}
=0.
\]
Multiplying by \(\delta\), we conclude that
\[
I_4+I_6+I_8=0.
\]

It remains to verify the initial condition. Since the convolution
integrals vanish at $t=0$, we have
\[
    \widetilde u_1(0)
    =
    \boldsymbol\eta:e(u(0))+q\,p(0).
\]
By the previously established identities
\[
    u(0)=u_0,
    \qquad
    p(0)=p_0,
\]
we obtain
\[
    \widetilde u_1(0)
    =
    \boldsymbol\eta:e(u_0)+q\,p_0
    =
    u_{1,0}^{\rm eq}.
\]
Thus, $\widetilde u_1$ satisfies both \cref{eq:micro_limit} and the prescribed initial condition.
Uniqueness can easily be checked via an energy estimate.
Therefore, $\widetilde u_1=u_1$.
\end{proof}

\begin{lemma}\label{lemma:characterization_limit_stresses}
    The macroscopic stresses are given via
    \[
    \int_\Omega\left(\Sigma^{\rm h}_{\rm el}(u,p)+\delta\Sigma^{\rm h}_{\rm vis}(\partial_tu,\partial_tp)\right): e(\psi_0)\di{x}=\int_\Omega f\cdot\psi_0\di{x}
    \]
    where
    \begin{align*}
    \Sigma^{\rm h}_{\rm el}(u,p)&=
    \mathbb{C}^{\rm h}e(u)-A^{\rm h}p,\\
    \Sigma^{\rm h}_{\rm vis}(\partial_tu,\partial_tp)&=
    \mathbb D^{\rm h}e(\partial_tu)
    +\partial_t\mathbb M^{\rm h}*e(\partial_tu)+\partial_t\mathbb{R}^{\rm h}*\partial_tp.
    \end{align*}
\end{lemma}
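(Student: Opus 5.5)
The plan is to start from the two-scale mechanical identity \eqref{eq:two-scale-limit-mechanics} with $\psi_1=0$. This already gives the stated weak form with $\Sigma^{\rm h}_{\rm el}$ and $\Sigma^{\rm h}_{\rm vis}$ defined through $u_1$. So the work is to insert the representation of \cref{lemma:u1charact},
\[
u_1=\boldsymbol\eta:e(u)+qp+\delta Z_u+\delta Z_p,\qquad Z_u:=\partial_t\boldsymbol\zeta*e(\partial_tu),\quad Z_p:=\partial_t\theta*\partial_tp,
\]
and evaluate the cell averages componentwise, as $3\times3$ matrices.

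\emph{Step 1: the instantaneous elastic part.} I would first collect the terms coming from $\boldsymbol\eta:e(u)+qp$ in $\Sigma^{\rm h}_{\rm el}$. By \eqref{eq:Ceff}, the term $\sum_{kl}e_{kl}(u)\int_Y\mathcal C(E_{kl}+e_y(\eta_{kl}))\,\di y$ equals $\mathbb C^{\rm h}e(u)$. The term $p\int_Y\mathcal Ce_y(q)\,\di y-\alpha|Y^g|p$ equals $-A^{\rm h}p$ by \eqref{eq:Ahom-def}.

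\emph{Step 2: regroup by history variable rather than by stress type.} What remains is
\[
\delta\int_Y\mathcal Ce_y(Z_u+Z_p)\,\di y+\delta\int_{Y^g}\mathcal C^g\bigl(e(\partial_tu)+e_y(\boldsymbol\eta:e(\partial_tu)+q\partial_tp)+\delta e_y(\partial_tZ_u+\partial_tZ_p)\bigr)\di y.
\]
For the strain history I apply Leibniz' rule to $\partial_tZ_u$, exactly as in the proof of \cref{lemma:u1charact}. The terms under the time integral combine into $\int_0^t e_{kl}(\partial_su(s))\,\partial_t\mathbb M^{\rm h}_{\cdot\cdot kl}(t-s)\,\di s$, which is precisely the time derivative of \eqref{eq:Mhom}. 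The boundary term is $\delta e_{kl}(\partial_tu)\int_{Y^g}\mathcal C^ge_y(\partial_t\zeta_{kl}(0))$, and since $\zeta_{kl}(0)=0$ this equals $e_{kl}(\partial_tu)\mathbb M^{\rm h}_{\cdot\cdot kl}(0)$. The pressure history is treated in the same way with $\theta$ and $R^{\rm h}$. Altogether I obtain
\[
\delta\Bigl[\textstyle\sum_{kl}e_{kl}(\partial_tu)\Bigl(\int_{Y^g}\mathcal C^g(E_{kl}+e_y(\eta_{kl}))\,\di y+\mathbb M^{\rm h}_{\cdot\cdot kl}(0)\Bigr)+\partial_tp\Bigl(\int_{Y^g}\mathcal C^ge_y(q)\,\di y+R^{\rm h}(0)\Bigr)\Bigr]+\delta\bigl(\partial_t\mathbb M^{\rm h}*e(\partial_tu)+\partial_tR^{\rm h}*\partial_tp\bigr).
\]

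\emph{Step 3 (main obstacle): identify the instantaneous damping.} I need to show two identities:
\[
\int_{Y^g}\mathcal C^g(E_{kl}+e_y(\eta_{kl}))\,\di y+\mathbb M^{\rm h}_{\cdot\cdot kl}(0)=\mathbb D^{\rm h}_{\cdot\cdot kl},\qquad \int_{Y^g}\mathcal C^ge_y(q)\,\di y+R^{\rm h}(0)=0.
\]
Since these are matrix-valued averages rather than tested identities, I need pointwise information on $Y^g$.

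For the first identity, I use \eqref{eq:initial-trace-zeta}, which holds for all $\phi\in H^1_\#(Y)^3$. Via a periodic extension from $Y^g$ to $Y$ (as in \cref{lem:relaxation_cell}), it also holds for every $\phi\in H^1_\#(Y^g)^3$. Comparing with \eqref{eq:cell:damp}, the function $\delta\partial_t\zeta_{kl}(0)+\eta_{kl}-\xi_{kl}$ solves the homogeneous coercive problem on the connected phase $Y^g$. By the periodic Korn inequality it therefore has vanishing symmetric gradient on $Y^g$. Hence $\mathbb M^{\rm h}(0)=\int_{Y^g}\mathcal C^g e_y(\xi_{kl}-\eta_{kl})\,\di y$, which gives the first identity by \eqref{eq:Deff}.

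For the second identity, the same argument applied to $\theta$ gives $e_y(\delta\partial_t\theta(0)+q)=0$ on $Y^g$, so $R^{\rm h}(0)=-\int_{Y^g}\mathcal C^ge_y(q)\,\di y$ and the pressure-rate term cancels. The delicate points here are the extension argument, which needs $Y^g$ connected and Lipschitz, and the justification of Leibniz' rule. The latter uses $\boldsymbol\zeta_{kl},\theta\in H^2(S;H^1_\#(Y)^3)$ from \cref{lem:relaxation_cell}.

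\emph{Step 4: the case $\eta=0$.} Here $\partial_tp$ is not known to be in $L^2$, so I interpret $\partial_tR^{\rm h}*\partial_tp$ in the distributional sense fixed above, noting that $\partial_tR^{\rm h}\in H^1(S)$. I would then justify the computation by approximating $p$ by smooth-in-time functions with the same initial value $p_0$ and passing to the limit in the convolution identity.
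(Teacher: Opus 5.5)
Your proposal is correct and follows essentially the same route as the paper. You set $\psi_1=0$, insert the representation of $u_1$ from \cref{lemma:u1charact}, apply Leibniz' rule to the convolutions, and identify the instantaneous terms from the relaxation cell problems at $t=0$, which give $e_y(\xi_{kl})=e_y(\eta_{kl}+\delta\partial_t\zeta_{kl}(0))$ and $e_y(q+\delta\partial_t\theta(0))=0$ on $Y^g$. Writing the boundary terms as $\mathbb M^{\rm h}(0)$ and $R^{\rm h}(0)$ is only cosmetic, while your extension of test functions from $Y^g$ to $Y$ and your density argument for $\eta=0$ make explicit two points the paper leaves implicit.
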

\begin{proof}
Choosing \(\psi_1=0\) in the two-scale limit equation
\cref{eq:micro_limit}, we obtain
\begin{multline}
\int_\Omega
\Bigg[\int_Y\mathcal C(y)\bigl(e(u)+e_y(u_1)\bigr)\,\di{y}-\alpha |Y^g|\,I_3\\
+\delta\int_{Y^g}\mathcal C^g\bigl(e(\partial_tu)+e_y(\partial_tu_1)\bigr)\,\di{y}\Bigg]:e(\psi_0)\,\di{x}
=\int_\Omega f\cdot\psi_0\,\di{x}.
\label{eq:macro-mechanics-before-identification}
\end{multline}
We now insert the representation (\cref{lemma:u1charact})
\[
u_1=
\eta:e(u)+qp
+\delta\,\partial_t\zeta*e(\partial_tu)
+\delta\,\partial_t\theta*\partial_tp.
\]
into the averaged elastic stress:
\begin{align*}
\int_Y
\mathcal C(y)
\bigl(e(u)+e_y(u_1)\bigr)
\,\di{y}
-\alpha |Y^g|p\,I
={}&
\sum_{k,l=1}^3
e_{kl}(u)
\int_Y
\mathcal C(y)
\bigl(E_{kl}+e_y(\eta_{kl})\bigr)
\,\di{y}
\\
&+
p\left[
\int_Y
\mathcal C(y)e_y(q)\,\di{y}
-\alpha |Y^g|I
\right]
\\
&+
\delta\sum_{k,l=1}^3
\int_0^t
e_{kl}(\partial_su(s))
\left[
\int_Y
\mathcal C(y)
e_y\bigl(\partial_t\zeta_{kl}(t-s)\bigr)
\,\di{y}
\right]
\,\di{s}
\\
&+
\delta\int_0^t
\partial_sp(s)
\left[
\int_Y
\mathcal C(y)
e_y\bigl(\partial_t\theta(t-s)\bigr)
\,\di{y}
\right]
\,\di{s}.
\end{align*}
By the definitions of \(\mathbb C^{\rm h}\) and \(A^{\rm h}\),
the first two lines reduce to
\[
\mathbb C^{\rm h}e(u)-A^{\rm h}p.
\]
Hence,
\begin{align}
\int_Y
\mathcal C(y)
\bigl(e(u)+e_y(u_1)\bigr)
\,\di{y}
-\alpha |Y^g|p\,I
={}&
\mathbb C^{\rm h}e(u)-A^{\rm h}p
\notag\\
&+
\delta\sum_{k,l=1}^3
\int_0^t
e_{kl}(\partial_su(s))
\left[
\int_Y
\mathcal C(y)
e_y\bigl(\partial_t\zeta_{kl}(t-s)\bigr)
\,\di{y}
\right]
\,\di{s}
\notag\\
&+
\delta\int_0^t
\partial_sp(s)
\left[
\int_Y
\mathcal C(y)
e_y\bigl(\partial_t\theta(t-s)\bigr)
\,\di{y}
\right]
\,\di{s}.
\label{eq:elastic-stress-expanded}
\end{align}
Moreover, applying Leibniz' rule to the convolution terms gives
\begin{align*}
\delta\int_{Y^g} \mathcal C^g \bigl(e(\partial_tu)+e_y(\partial_tu_1)\bigr) \,\di{y}
={}& \delta\sum_{k,l=1}^3 e_{kl}(\partial_tu) \int_{Y^g} \mathcal C^g \bigl(E_{kl}+e_y(\eta_{kl})\bigr) \,\di{y} \\ 
&+ \delta\,\partial_tp \int_{Y^g} \mathcal C^g e_y(q)\,\di{y} \\ 
&+ \delta^2\sum_{k,l=1}^3 e_{kl}(\partial_tu) \int_{Y^g} \mathcal C^g e_y\bigl(\partial_t\zeta_{kl}(0)\bigr) \,\di{y} \\
&+ \delta^2\,\partial_tp \int_{Y^g} \mathcal C^g e_y\bigl(\partial_t\theta(0)\bigr) \,\di{y} \\ 
&+ \delta^2\sum_{k,l=1}^3 \int_0^t e_{kl}(\partial_su(s)) \left[ \int_{Y^g} \mathcal C^g e_y\bigl(\partial_{tt}\zeta_{kl}(t-s)\bigr) \,\di{y} \right] \,\di{s} \\ 
&+ \delta^2\int_0^t \partial_sp(s) \left[ \int_{Y^g} \mathcal C^g e_y\bigl(\partial_{tt}\theta(t-s)\bigr) \,\di{y} \right] \,\di{s}.
\end{align*}
To identify the instantaneous viscous contribution, we differentiate the convolution terms using Leibniz' rule. 
For the strain-dependent part,
\[
\partial_t
\left[
\int_0^t
\partial_t\zeta_{kl}(t-s,y)\,
e_{kl}(\partial_su(s,x))
\,\di{s}
\right]
=
\partial_t\zeta_{kl}(0,y)\,
e_{kl}(\partial_tu(t,x))
+
\int_0^t
\partial_{tt}\zeta_{kl}(t-s,y)\,
e_{kl}(\partial_su(s,x))
\,\di{s}.
\]
Hence, the terms proportional to \(e_{kl}(\partial_tu(t,x))\) in the
averaged viscous stress are
\begin{multline*}
\delta
\sum_{k,l=1}^3
e_{kl}(\partial_tu)
\int_{Y^g}
\mathcal C^g
\bigl(E_{kl}+e_y(\eta_{kl})\bigr)
\,\di{y}
+
\delta^2
\sum_{k,l=1}^3
e_{kl}(\partial_tu)
\int_{Y^g}
\mathcal C^g
e_y\bigl(\partial_t\zeta_{kl}(0)\bigr)
\,\di{y}\\
=
\delta
\sum_{k,l=1}^3
e_{kl}(\partial_tu)
\int_{Y^g}
\mathcal C^g
\Bigl(
E_{kl}
+
e_y\bigl(
\eta_{kl}
+
\delta\partial_t\zeta_{kl}(0)
\bigr)
\Bigr)
\,\di{y}.
\end{multline*}
Evaluating the relaxation cell problem at \(t=0\) gives
\[
\int_{Y^g}
\mathcal C^g
\Bigl(
E_{kl}
+
e_y\bigl(
\eta_{kl}
+
\delta\partial_t\zeta_{kl}(0)
\bigr)
\Bigr)
:e_y(\phi)
\,\di{y}
=0
\]
for every admissible \(\phi\). 
Comparing this identity with the cell problem defining
\(\xi_{kl}\), we obtain
\[
e_y(\xi_{kl})
=
e_y(\eta_{kl})
+
\delta e_y\bigl(\partial_t\zeta_{kl}(0)\bigr)
\qquad\text{in }Y^g.
\]
This identity also determines them up to the corresponding rigid
displacement.
Consequently,
\begin{align*}
&\delta
\sum_{k,l=1}^3
e_{kl}(\partial_tu)
\int_{Y^g}
\mathcal C^g
\Bigl(
E_{kl}
+
e_y\bigl(
\eta_{kl}
+
\delta\partial_t\zeta_{kl}(0)
\bigr)
\Bigr)
\,\di{y}
\\
&\qquad=
\delta
\sum_{k,l=1}^3
e_{kl}(\partial_tu)
\int_{Y^g}
\mathcal C^g
\bigl(E_{kl}+e_y(\xi_{kl})\bigr)
\,\di{y}
\\
&\qquad=
\delta\,\mathbb D^{\rm h}e(\partial_tu).
\end{align*}
Similarly,
\[
\partial_t
\left[
\int_0^t
\partial_t\theta(t-s,y)\,
\partial_sp(s,x)
\,\di{s}
\right]
=
\partial_t\theta(0,y)\,\partial_tp(t,x)
+
\int_0^t
\partial_{tt}\theta(t-s,y)\,
\partial_sp(s,x)\,\di{s}.
\]
Thus, the instantaneous pressure-rate contribution is
\begin{align*}
\delta\,\partial_tp
\int_{Y^g}
\mathcal C^g e_y(q)\,\di{y}
+
\delta^2\,\partial_tp
\int_{Y^g}
\mathcal C^g e_y\bigl(\partial_t\theta(0)\bigr)\,\di{y}=
\delta\,\partial_tp
\int_{Y^g}
\mathcal C^g
e_y\bigl(q+\delta\partial_t\theta(0)\bigr)
\,\di{y}.
\end{align*}
On the other hand, evaluating the cell problem for \(\theta\) at
\(t=0\) and using \(\theta(0)=0\), we obtain
\[
\int_{Y^g}
\mathcal C^g
e_y\bigl(q+\delta\partial_t\theta(0)\bigr)
:e_y(\phi)\,\di{y}
=0
\]
for every admissible test function \(\phi\). Choosing
\[
\phi=q+\delta\partial_t\theta(0)
\]
and using the coercivity of \(\mathcal C^g\), it follows that
\[
e_y\bigl(q+\delta\partial_t\theta(0)\bigr)=0
\qquad\text{in }Y^g.
\]
Hence,
\[
\int_{Y^g}
\mathcal C^g
e_y\bigl(q+\delta\partial_t\theta(0)\bigr)
\,\di{y}
=0,
\]
and therefore the instantaneous pressure-rate contribution vanishes:
\[
\delta\,\partial_tp
\int_{Y^g}
\mathcal C^g
e_y\bigl(q+\delta\partial_t\theta(0)\bigr)
\,\di{y}
=0.
\]
Collecting the remaining strain-history contributions gives
\begin{multline*}
\delta\sum_{k,l=1}^3
\int_0^t
e_{kl}(\partial_su(s))
\Bigg[
\int_Y
\mathcal C(y)
e_y\bigl(\partial_t\zeta_{kl}(t-s)\bigr)
\,\di{y}+
\delta\int_{Y^g}
\mathcal C^g
e_y\bigl(\partial_{tt}\zeta_{kl}(t-s)\bigr)
\,\di{y}
\Bigg]
\,\di{s}
\\
=
\delta\,
\bigl(
\partial_t\mathbb M^{\rm h}
*
e(\partial_tu)
\bigr)(t).
\end{multline*}
Similarly,
\begin{equation*}
\delta\int_0^t
\partial_sp(s)
\Bigg[
\int_Y
\mathcal C(y)
e_y\bigl(\partial_t\theta(t-s)\bigr)
\,\di{y}
+
\delta\int_{Y^g}
\mathcal C^g
e_y\bigl(\partial_{tt}\theta(t-s)\bigr)
\,\di{y}
\Bigg]
\,\di{s}
=
\delta\,
\bigl(
\partial_tR^{\rm h}
*
\partial_tp
\bigr)(t).
\end{equation*}
This shows the stated decomposition.
\end{proof}

\begin{lemma}\label{lemma:pressure_macroscopic}
    The macroscopic effective fluid content $F(u,p)$ satisfies
    \[
    F(u,p)=F^{\rm eq}(u,p)+\delta F^{\rm mem}(u,p)+\frac\eta\alpha\partial_t\left[F^{\rm eq}(u,p)-c|Y^g|p+\delta F^{\rm mem}(u,p)\right]
    \]
    with
    \begin{align*}
    F^{\rm eq}(u,p)=c^{\rm h}p + A^{\rm h}:e(u),\qquad
    F^{\rm mem}(u,p)=-\partial_t R^{\rm h}*e(\partial_tu) +H^{\rm h}*\partial_tp.
    \end{align*}
\end{lemma}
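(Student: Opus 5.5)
We denote by $F(u,p)$ the averaged microscopic fluid content
\[
F(u,p)=\int_{Y^g}\Bigl[cp+\alpha\bigl(\dive_xu+\dive_yu_1\bigr)+\eta\bigl(\dive_x\partial_tu+\dive_y\partial_tu_1\bigr)\Bigr]\di y .
\]
The plan is to insert the representation of $u_1$ from \cref{lemma:u1charact} and sort the resulting terms into an equilibrium part and a memory part.

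We start with the part without the $\eta$-term, $F_0:=\int_{Y^g}[cp+\alpha(\dive_xu+\dive_yu_1)]\di y$. Since $u_1=\boldsymbol\eta:e(u)+qp+\delta\partial_t\boldsymbol\zeta*e(\partial_tu)+\delta\partial_t\theta*\partial_tp$, the instantaneous part gives
\[
c|Y^g|p+\alpha|Y^g|\dive u+\alpha\sum_{k,l}e_{kl}(u)\int_{Y^g}\dive_y\eta_{kl}\di y+\alpha p\int_{Y^g}\dive_yq\di y .
\]
The pressure terms add up to $c^{\rm h}p$ by \eqref{eq:Qhom-def}. For the strain terms, we test the cell problem \eqref{eq:cell:pressure} with $\psi=\eta_{kl}$ and the elastic cell problem \eqref{eq:cell:elast} with $\psi=q$. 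Using the symmetry of $\mathcal C$, this yields
\[
\alpha\int_\Gamma\eta_{kl}\cdot n\di\sigma=\int_Y\mathcal Ce_y(q):e_y(\eta_{kl})\di y=-\int_Y\mathcal Ce_y(q):E_{kl}\di y .
\]
By periodicity and the divergence theorem, $\int_{Y^g}\dive_y\eta_{kl}\di y=\int_\Gamma\eta_{kl}\cdot n\di\sigma$, with the sign convention fixed by $n_\Gamma$. Together with $\alpha|Y^g|\delta_{kl}$, this gives $A^{\rm h}_{kl}$ from \eqref{eq:Ahom-def}. Hence the instantaneous part equals $F^{\rm eq}$.

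The convolution part equals $\alpha\int_{Y^g}\dive_y(\delta\partial_t\zeta_{kl})\di y*e_{kl}(\partial_tu)+\delta\,\partial_t(\alpha\int_{Y^g}\dive_y\theta\di y)*\partial_tp$. The second kernel is exactly $H^{\rm h}$ from \eqref{eq:Hhom}. For the first, we need the reciprocity identity
\[
\alpha\int_{Y^g}\dive_y\zeta_{kl}(t)\di y=-R^{\rm h}_{kl}(t),
\]
and we expect this to be the main obstacle. To prove it, we test \eqref{eq:cell:pressure} with $\zeta_{kl}(t)$, so that $\alpha\int_{Y^g}\dive_y\zeta_{kl}=\int_Y\mathcal Ce_y(q):e_y(\zeta_{kl})$. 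Next we test \eqref{eq:cell_damping} with $\zeta_{kl}$ and \eqref{eq:cell_damping_kl} with $\theta$, and use the elastic cell problem to replace forcing terms by averages of $\mathcal Ce_y(\cdot)$. Both relaxation problems have the same self-adjoint operator structure $a+\delta b\partial_t$ and zero initial data. So I would compare them after a Laplace transform in time, or equivalently via a convolution-symmetry argument $a(\zeta*\dot\theta)=a(\theta*\dot\zeta)$, to trade the forcings and identify the integral with $R^{\rm h}_{kl}$ as defined in \eqref{eq:Rhom}. If the sign bookkeeping fails, I would fall back on verifying the identity through the ODE it satisfies: both sides vanish at $t=0$ and obey the same first-order relation derived from \eqref{eq:cell_damping_kl}. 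Differentiating in time then gives the kernel $-\partial_tR^{\rm h}$, and so $F_0=F^{\rm eq}+\delta F^{\rm mem}$.

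Finally, we treat the $\eta$-term. It equals $\frac\eta\alpha\partial_t$ of $\alpha\int_{Y^g}(\dive_xu+\dive_yu_1)\di y$, and this last quantity is $F_0-c|Y^g|p$. This gives the stated formula in both cases $\eta=0$ and $\eta=\alpha\delta$. The exchange of $\partial_t$ with the cell integrals is justified by $u_1\in H^1$ in time (\cref{prop:twoscale}) and by the $H^2$-regularity of the kernels from \cref{lem:relaxation_cell}. For $\eta=0$, the convolutions with $\partial_tp$ are interpreted in the distributional sense defined above.
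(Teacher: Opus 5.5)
Your plan follows the paper's proof step for step. You insert the representation of $u_1$ from \cref{lemma:u1charact} and read off $c^{\rm h}$ from \eqref{eq:Qhom-def}. You obtain $A^{\rm h}$ by testing \eqref{eq:cell:pressure} with $\eta_{kl}$ and \eqref{eq:cell:elast} with $q$, and read off $H^{\rm h}$ from \eqref{eq:Hhom}. You handle the strain-memory kernel by reciprocity between the two relaxation problems, and write the $\eta$-term as $\frac\eta\alpha\partial_t(F_0-c|Y^g|p)$. Your explicit identification $\int_{Y^g}\dive_y\psi\di{y}=\int_\Gamma\psi\cdot n_\Gamma\di{\sigma}$ and the Laplace-transform justification of reciprocity are useful details that the paper leaves implicit.

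There is, however, one concrete error. The identity you single out as the key step, $\alpha\int_{Y^g}\dive_y\zeta_{kl}(t)\di{y}=-R^{\rm h}_{kl}(t)$, is false, and your fallback (``both sides vanish at $t=0$'') fails for the same reason. The left-hand side vanishes at $t=0$ because $\zeta_{kl}(0)=0$. But evaluating \eqref{eq:cell_damping} at $t=0$ with $\theta(0)=0$ and testing with $q+\delta\partial_t\theta(0)$ gives $e_y(q+\delta\partial_t\theta(0))=0$ in $Y^g$, so that
\[
R^{\rm h}_{kl}(0)=\delta\int_{Y^g}\bigl(\mathcal C^g e_y(\partial_t\theta(0))\bigr)_{kl}\di{y}=-\int_{Y^g}\bigl(\mathcal C^g e_y(q)\bigr)_{kl}\di{y}.
\]
This is in general nonzero: it is the averaged gel stress generated by a unit pore pressure. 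The correct relation, which is the one the paper establishes, is
\[
\alpha\int_{Y^g}\dive_y\zeta_{kl}(t)\di{y}=-R^{\rm h}_{kl}(t)-\int_{Y^g}\mathcal C^gE_{kl}:e_y(q)\di{y}.
\]

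To prove it, let $a,b$ be the bilinear forms from \cref{lem:relaxation_cell} and set $g_{kl}(\psi):=\int_{Y^g}\mathcal C^g(E_{kl}+e_y(\eta_{kl})):e_y(\psi)\di{y}$.
\begin{itemize}
\item Testing \eqref{eq:cell:elast} with $\theta$ and \eqref{eq:cell_damping} with $\eta_{kl}$ gives $R^{\rm h}_{kl}=b(q,\eta_{kl})+\delta g_{kl}(\partial_t\theta)$.
\item Testing \eqref{eq:cell:pressure} with $\zeta_{kl}$ and \eqref{eq:cell_damping_kl} with $q$ gives $\alpha\int_{Y^g}\dive_y\zeta_{kl}\di{y}=-g_{kl}(q)-\delta b(q,\partial_t\zeta_{kl})$.
\item Your Laplace-transform argument applies because both problems are driven by time-independent loads through the symmetric operator $a+s\delta b$, invertible for $s>0$. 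It gives $b(q,\zeta_{kl}(t))=g_{kl}(\theta(t))$, hence $b(q,\partial_t\zeta_{kl})=g_{kl}(\partial_t\theta)$.
\end{itemize}
Adding the two expressions leaves exactly the constant $b(q,\eta_{kl})-g_{kl}(q)=-\int_{Y^g}\mathcal C^gE_{kl}:e_y(q)\di{y}$.

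Since you only use the time derivative of the identity, this constant drops out. You still obtain the kernel $-\partial_tR^{\rm h}$, and the rest of your argument goes through unchanged. You need only replace the false identity by the corrected one (or state only its differentiated form). Discard the ODE fallback: the two sides have different initial values, and in any case a scalar cell average of the relaxation problem does not satisfy a closed first-order ODE.
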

\begin{proof}
By definition, the averaged fluid content before eliminating the
microscopic displacement corrector is
\[
F(u,p)
:=
\int_{Y^g}
\Bigl[
cp
+
\alpha\bigl(\dive_xu+\dive_yu_1\bigr)
+
\eta\bigl(
\dive_x\partial_tu+\dive_y\partial_tu_1
\bigr)
\Bigr]
\,\di{y}.
\]
Since \(u\) and \(p\) do not depend on \(y\), this may be written as
\[
F(u,p)
=
c|Y^g|p
+
\alpha\mathcal V(u,p)
+
\eta\,\partial_t\mathcal V(u,p),
\]
where
\[
\mathcal V(u,p)
:=
|Y^g|\dive_xu
+
\int_{Y^g}\dive_yu_1\,\di{y}.
\]

We now use the representation
\[
u_1
=
\sum_{k,l=1}^3
\eta_{kl}e_{kl}(u)
+
qp
+
\delta\sum_{k,l=1}^3
\partial_t\zeta_{kl}*e_{kl}(\partial_tu)
+
\delta\,\partial_t\theta*\partial_tp.
\]
Consequently,
\begin{align}
\mathcal V(u,p)
={}&
|Y^g|\dive_xu
+
\sum_{k,l=1}^3
e_{kl}(u)
\int_{Y^g}\dive_y\eta_{kl}\,\di{y}
\notag\\
&+
p\int_{Y^g}\dive_yq\,\di{y}
\notag\\
&+
\delta\sum_{k,l=1}^3
\left(
\int_{Y^g}
\dive_y\partial_t\zeta_{kl}\,\di{y}
\right)
*e_{kl}(\partial_tu)
\notag\\
&+
\delta
\left(
\int_{Y^g}
\dive_y\partial_t\theta\,\di{y}
\right)
*\partial_tp.
\label{eq:proof-fluid-content-V}
\end{align}

We first identify the instantaneous strain contribution. Testing the
cell problem for \(q\) with \(\eta_{kl}\) gives
\[
\int_Y
\mathcal C e_y(q):e_y(\eta_{kl})
\,\di{y}
=
\alpha\int_{Y^g}\dive_y\eta_{kl}\,\di{y}.
\]
On the other hand, testing the cell problem for \(\eta_{kl}\) with
\(q\) yields
\[
\int_Y
\mathcal C
\bigl(E_{kl}+e_y(\eta_{kl})\bigr)
:e_y(q)
\,\di{y}
=0.
\]
Using the symmetry of \(\mathcal C\), we therefore obtain
\[
\alpha\int_{Y^g}\dive_y\eta_{kl}\,\di{y}
=
-
\int_Y
\bigl(\mathcal C e_y(q)\bigr)_{kl}
\,\di{y}.
\]
It follows that
\[
\alpha
\left(
|Y^g|\delta_{kl}
+
\int_{Y^g}\dive_y\eta_{kl}\,\di{y}
\right)
=
A^{\rm h}_{kl}.
\]
Hence,
\[
\alpha
\left[
|Y^g|\dive_xu
+
\sum_{k,l=1}^3
e_{kl}(u)
\int_{Y^g}\dive_y\eta_{kl}\,\di{y}
\right]
=
A^{\rm h}:e(u).
\]

Moreover, by the definition of \(c^{\rm h}\),
\[
c|Y^g|p
+
\alpha p\int_{Y^g}\dive_yq\,\di{y}
=
c^{\rm h}p.
\]
Thus, the instantaneous part of the fluid content is
\[
F^{\rm eq}(u,p)
=
c^{\rm h}p
+
A^{\rm h}:e(u).
\]

We next identify the strain-memory term. Set
\[
L_{kl}(t)
:=
\alpha\int_{Y^g}
\dive_y\zeta_{kl}(t,y)
\,\di{y}.
\]
Testing the cell problem for \(q\) with \(\zeta_{kl}(t)\) gives
\[
L_{kl}(t)
=
\int_Y
\mathcal C e_y(q):e_y(\zeta_{kl}(t))
\,\di{y}.
\]
Testing the relaxation problem for \(\zeta_{kl}\) with \(q\), we find
\[
L_{kl}(t)
+
\delta\int_{Y^g}
\mathcal C^g e_y(\partial_t\zeta_{kl}(t)):e_y(q)
\,\di{y}
=
-
\int_{Y^g}
\mathcal C^g
\bigl(E_{kl}+e_y(\eta_{kl})\bigr):e_y(q)
\,\di{y}.
\]
A reciprocity argument for the two relaxation problems gives
\[
\int_{Y^g}
\mathcal C^g
\bigl(E_{kl}+e_y(\eta_{kl})\bigr)
:e_y(\partial_t\theta(t))
\,\di{y}
=
\int_{Y^g}
\mathcal C^g e_y(q):e_y(\partial_t\zeta_{kl}(t))
\,\di{y}.
\]
Using the cell problem for \(\eta_{kl}\) and the relaxation problem for
\(\theta\), we consequently obtain
\[
R^{\rm h}_{kl}(t)
=
-
\int_{Y^g}
\mathcal C^g E_{kl}:e_y(q)\,\di{y}
-
L_{kl}(t).
\]
The first term on the right-hand side is independent of time.
Therefore,
\[
\partial_tR^{\rm h}_{kl}(t)
=
-\partial_tL_{kl}(t)
=
-\alpha\int_{Y^g}
\dive_y\partial_t\zeta_{kl}(t,y)
\,\di{y}.
\]
Hence,
\begin{align*}
&\alpha\delta
\sum_{k,l=1}^3
\left(
\int_{Y^g}
\dive_y\partial_t\zeta_{kl}\,\di{y}
\right)
*e_{kl}(\partial_tu)=
-\delta\,
\partial_tR^{\rm h}*e(\partial_tu).
\end{align*}
For the pressure-memory term, the definition of \(H^{\rm h}\) gives
directly
\[
\alpha\delta
\left(
\int_{Y^g}
\dive_y\partial_t\theta\,\di{y}
\right)
*\partial_tp
=
\delta\,H^{\rm h}*\partial_tp.
\]
Therefore, defining
\[
F^{\rm mem}(u,p)
:=
-\partial_tR^{\rm h}*e(\partial_tu)
+
H^{\rm h}*\partial_tp,
\]
equation \eqref{eq:proof-fluid-content-V} becomes
\[
c|Y^g|p+\alpha\mathcal V(u,p)
=
F^{\rm eq}(u,p)
+
\delta F^{\rm mem}(u,p).
\]

Furthermore,
\[
\alpha\mathcal V(u,p)
=
F^{\rm eq}(u,p)
-
c|Y^g|p
+
\delta F^{\rm mem}(u,p).
\]
Since
\[
\eta\,\partial_t\mathcal V(u,p)
=
\frac{\eta}{\alpha}
\partial_t\bigl(\alpha\mathcal V(u,p)\bigr),
\]
we finally obtain
\[
F(u,p)
=
F^{\rm eq}(u,p)
+
\delta F^{\rm mem}(u,p)
+
\frac{\eta}{\alpha}
\partial_t
\left[
F^{\rm eq}(u,p)
-
c|Y^g|p
+
\delta F^{\rm mem}(u,p)
\right].
\]
This proves the assertion.
\end{proof}

\bibliographystyle{abbrv}
\bibliography{references}
\end{document}